\setlist[enumerate]{itemsep=0em}
\title{\bf On hereditarily self-similar \\$p$-adic analytic pro-$p$ groups}
\author{
        Francesco Noseda  \thanks{{\tt noseda@im.ufrj.br}.}
        \\[0.1cm]
        Ilir Snopce \thanks{{\tt ilir@im.ufrj.br}. Supported by the Alexander von Humboldt Foundation, 
        CAPES (grant 88881.145624/2017-01), FAPERJ and CNPq.
        }
        \\[0.2cm]
        \footnotesize{Mathematics Institute - 
        Federal University of Rio de Janeiro,}\\
        \footnotesize{Avenida Athos da Silveira Ramos 149, 
        }\\
        \footnotesize{21941-909, Rio de Janeiro,  Brazil.}
}
\date{}
\newcommand{\bb}[1]{\mathbb{#1}}
\newcommand{\cl}[1]{\mathcal{#1}}
\newcommand{\mr}[1]{\mathrm{#1}}
\newcommand{\rar}{\rightarrow}
\newcommand{\ol}{\overline}
\newcommand{\vep}{\varepsilon}
\newcommand{\les}{\leqslant}
\newcommand{\ges}{\geqslant}
\newcommand{\ep}{\hfill $\square$} 
\newcommand{\gen}[1]{\langle #1 \rangle}
\newtheorem{lemma} {Lemma} [section]
\newtheorem{proposition} [lemma] {Proposition}
\newtheorem{theorem} [lemma] {Theorem}
\newtheorem{corollary} [lemma] {Corollary}
\newtheorem{definition}[lemma] {Definition}
\newtheorem{remark}[lemma]{Remark}
\newenvironment{proof}{{\sc Proof:}}{
\hfill $\square$}
\numberwithin{equation}{section}
\newtheorem{theoremx}{Theorem}
\newtheorem{conjx}[theoremx]{Conjecture}
\begin{document}

\maketitle


\begin{abstract}
A non-trivial finitely generated pro-$p$ group $G$ is said to be 
strongly hereditarily self-similar of index $p$
if every non-trivial finitely generated closed subgroup of $G$ admits a 
faithful self-similar action on a $p$-ary tree. 
We classify the solvable torsion-free 
$p$-adic analytic pro-$p$ groups of dimension less than $p$  
that are strongly hereditarily self-similar of index $p$.
Moreover, we show that a solvable torsion-free $p$-adic
analytic pro-$p$ group of dimension less than $p$ is strongly 
hereditarily self-similar of index $p$ if and only if 
it is isomorphic to the maximal pro-$p$ Galois group of some 
field that contains a primitive 
$p$-th root of unity. 
As a key step for the proof of the above results, 
we classify the 3-dimensional solvable 
torsion-free 
$p$-adic analytic pro-$p$ groups that admit a faithful self-similar  
action on a $p$-ary tree, completing the classification
of the 3-dimensional torsion-free 
$p$-adic analytic pro-$p$ groups that admit such actions.
\end{abstract}

\let\thefootnote\relax\footnotetext{\textit{Mathematics Subject Classification (2020): }
Primary 20E18, 22E20, 20E08; Secondary 22E60.}
\let\thefootnote\relax\footnotetext{\textit{Key words:} self-similar group, 
pro-$p$ group, $p$-adic analytic group, $p$-adic Lie lattice,
maximal Galois pro-$p$ group.}

\tableofcontents

\section*{Introduction}

Groups that admit a faithful self-similar action on some regular 
rooted $d$-ary 
tree $T_d$ 
form an interesting class that contains many important examples 
such as the Grigorchuk 
2-group \cite{Gri80}, the Gupta-Sidki $p$-groups \cite{GuSi83}, the affine groups 
$\mathbb{Z}^n \rtimes GL_n(\mathbb{Z})$ 
\cite{BrSi98}, and
groups obtained as iterated monodromy groups of self-coverings 
of the Riemann sphere by post-critically finite rational maps \cite{NekSSgrp}.
Recently there has been an intensive study on the self-similar actions of other 
important families of groups including abelian groups \cite{BrSi10}, 
wreath products of abelian groups \cite{DSwreath},
finitely 
generated nilpotent groups \cite{BeSi07},  arithmetic groups \cite{Ka12}, and 
groups of type $\mr{FP}_n$ \cite{KoSi19}. Self-similar actions of some classes of finite $p$-groups were studied 
in \cite{Su11} and \cite{BaFaFeVa}.  

We say that a group $G$ is \textbf{self-similar of index} $\bm{d}$ 
if $G$ admits a faithful self-similar action on
$T_d$ that is transitive on the first level; moreover,
we say that $G$ is \textbf{self-similar} if it is self-similar of 
some index $d$. In \cite{NS2019} 
we initiated the study of self-similar actions of $p$-adic analytic pro-$p$ groups. 
In particular,  we classified the 
3-dimensional \textit{unsolvable} torsion-free $p$-adic analytic pro-$p$ groups for $p\ges 5$, 
and determined which of them admit a faithful self-similar action on a $p$-ary tree. 
In the present paper, instead, we focus on the study of self-similar actions of torsion-free \textbf{solvable} $p$-adic analytic pro-$p$ groups.

It is fairly easy to show that every free abelian group ${\mathbb{Z}}^r$ of finite rank $r \ges 1$ is self-similar of any index $d\ges 2$
(cf. \cite[Section 2.9.2]{NekSSgrp}; see also \cite{NSAutBinTree}) 
Hence, every non-trivial  subgroup of ${\mathbb{Z}}^r$  is self-similar of any index $d \ges 2$.  Similarly, every non-trivial closed subgroup of a free abelian pro-$p$ group ${\mathbb{Z}}_p^r$ is self-similar of index $p^k$, for $k \ges 1$. 
Motivated by this phenomenon we make the following  definitions.  
A finitely generated pro-$p$ group $G$ is said to be 
\textbf{hereditarily self-similar of index $\bm{p}^k$}
if any open subgroup of $G$
is self-similar of index $p^k$. If $G$ and all of its non-trivial 
finitely generated closed subgroups are self-similar of index 
$p^k$ then $G$ is said to be 
\textbf{strongly hereditarily self-similar of index $\bm{p}^k$}. 

From \cite[Proposition 1.5]{NS2019}, it follows that any torsion-free
$p$-adic analytic pro-$p$ group of dimension 1 or 2 
is strongly hereditarily self-similar of index $p^k$
for all $k\ges 1$.
Moreover, it is not difficult to see that if $p\ges 5$ then 
any 3-dimensional \textit{solvable} torsion-free $p$-adic analytic pro-$p$ group
is strongly hereditarily self-similar of index $p^{2m}$ for all
$m\ges 1$ (see Proposition \ref{pgstrong2m}).
Observe that the latter class  contains
a continuum of groups that are pairwise incommensurable
(see \cite{SnopceJGT2016}),
in contrast to the discrete case, where there are only countably many pairwise
non-isomorphic finitely generated self-similar groups (cf. \cite[Section 1.5.3]{NekSSgrp}).
On the other hand, 
it is an interesting problem to understand which pro-$p$ groups have
the property of being strongly
hereditarily self-similar of  \textbf{index $\bm{p}$},
and the main result of this paper is the classification of 
the  solvable torsion-free $p$-adic analytic pro-$p$ groups with this property.

\begin{theoremx}\label{tmainresult}
Let $p$ be a prime, and 
let $G$ be a solvable torsion-free $p$-adic analytic pro-$p$ group.
Suppose that $p> d:= \textrm{dim}(G)$. 
Then $G$ is strongly hereditarily self-similar of index $p$ if and only if  
$G$ is isomorphic to one of the following groups:
  \begin{enumerate}
  \item For $d\ges 1$, the abelian pro-$p$ group $\mathbb{Z}_p^d$;
  \item  
  For $d\ges 2$, the metabelian pro-$p$ group
  $G^d(s):=\bb{Z}_p\ltimes \bb{Z}_p^{d-1}$, 
  where
  the canonical generator of 
  $\bb{Z}_p$ acts on $\bb{Z}_p^{d-1}$
  by  multiplication by the scalar $1+p^s$, for some integer $s\ges 1$.
  \end{enumerate} 
\end{theoremx}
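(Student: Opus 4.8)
For the "if" direction, I would first confirm that each group in the list is strongly hereditarily self-similar of index $p$. The abelian case $\mathbb{Z}_p^d$ is already covered by the introductory remarks (every non-trivial closed subgroup of $\mathbb{Z}_p^r$ is self-similar of index $p$). For the metabelian groups $G^d(s) = \mathbb{Z}_p \ltimes \mathbb{Z}_p^{d-1}$, I expect the paper has (earlier or will shortly) established an explicit self-similar action, so the task is to show that every non-trivial finitely generated closed subgroup $H \leqslant G^d(s)$ is again self-similar of index $p$. The natural strategy is to analyze such subgroups up to commensurability: a closed subgroup is either contained in the abelian normal part $\mathbb{Z}_p^{d-1}$ (hence free abelian, hence self-similar of index $p$) or projects non-trivially onto the acting $\mathbb{Z}_p$, in which case it should itself be isomorphic to some $G^{d'}(s')$ with $d' \leqslant d$, reducing to a group of the same form.

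**For the "only if" direction**, which I expect to be the technical core, I would argue that no other solvable torsion-free $p$-adic analytic pro-$p$ group of dimension $d < p$ can be strongly hereditarily self-similar of index $p$. The dimension hypothesis $p > d$ is crucial: it allows the use of the $p$-adic logarithm and the Lazard correspondence to pass between $G$ and its associated $\mathbb{Z}_p$-Lie lattice $L = \log(G)$, translating group-theoretic conditions into Lie-algebraic ones. A faithful self-similar action of index $p$ corresponds to a choice of open subgroup $M \leqslant_o G$ of index $p$ together with a "virtual endomorphism" $f\colon M \to G$ that is simple (no non-trivial $f$-invariant normal subgroup of $G$ contained in $M$). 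I would translate the existence of such data into the Lie lattice, where self-similarity of index $p$ becomes the existence of a subalgebra $\mathfrak{m} \leqslant L$ of index $p$ and an injective Lie homomorphism $\mathfrak{m} \hookrightarrow L$ with a suitable simplicity condition.

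**The main obstacle** will be controlling, simultaneously, all finitely generated closed subgroups of $G$ — the "hereditary" condition forces strong constraints on the structure of $L$. My plan is to use the solvability and the derived series: for a solvable $L$, the structure is governed by how the top quotient acts on the abelian layers. I would show that index-$p$ self-similarity of every subgroup forces the action of any element on each abelian layer to be by a scalar (a homothety) of the form $1 + p^s$, which is exactly what pins down the groups $G^d(s)$ and excludes, for instance, actions with distinct eigenvalues or non-diagonalizable Jordan blocks. Concretely, I expect to take a putative $G$ not on the list, locate a finitely generated closed subgroup whose Lie lattice admits no index-$p$ simple virtual endomorphism (for example, because the relevant eigenvalue/congruence obstruction modulo $p$ fails), and thereby contradict strong hereditary self-similarity. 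The delicate point is reducing the general solvable case to the abelian-by-cyclic situation; I anticipate leaning on the classification of $3$-dimensional solvable groups promised in the abstract, using it as the base case of an inductive argument on $d$ that peels off one-dimensional quotients while preserving the self-similar-of-index-$p$ property for subgroups.
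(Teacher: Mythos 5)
Your overall architecture --- pass to the Lie lattice via the Lazard correspondence (legitimized by $p>d$), prove the ``if'' direction by showing every non-trivial closed subgroup of $G^d(s)$ is again of the same form, and prove the ``only if'' direction by induction on $d$ with the $3$-dimensional classification as base case, forcing the action on the abelian part to be scalar --- is the same as the paper's. Two points in your plan, however, would not survive contact with the details. First, you propose an induction that ``peels off one-dimensional quotients while preserving the self-similar-of-index-$p$ property for subgroups.'' Strong hereditary self-similarity passes trivially to closed subgroups (and to non-zero subalgebras), but there is no reason it should pass to quotients, and the paper's induction in Theorem \ref{tshssp} is arranged to apply the inductive hypothesis only to proper \emph{subalgebras} of $L$ (the subalgebras $M_\alpha=\gen{x_\alpha,y_1,\dots,y_m}$ and the $2$-generated subalgebras $M'_z$). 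If your reduction genuinely needs quotients, you would have to justify why the property descends, and in general it does not.

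Second, and more seriously, the $d=3$ base case plus induction on subalgebras is not enough by itself: the hardest configuration is the one where $\mr{iso}_L[L,L]$ has codimension $1$ and some $2$-generated subalgebra already has full dimension $d$, so that $\mr{ad}(x)$ acts on the abelian ideal essentially by a companion matrix. In that situation every non-abelian subalgebra has finite index (applying $[x,\cdot\,]$ repeatedly cycles through a full basis of the abelian ideal), so no subalgebra of dimension $\les 3$ detects the obstruction, and the paper must prove a genuinely $d$-dimensional non-self-similarity result (Proposition \ref{phdn} and Corollary \ref{cnsspsub}, which the authors single out as among the most difficult technical steps) to exhibit a finite-index subalgebra that is not self-similar of index $p$. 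Your plan of ``locating a subgroup where the eigenvalue/congruence obstruction fails'' is right in spirit but tacitly assumes such a witness exists in low dimension, which fails here. Relatedly, before any of this one must show that $[L,L]$ is abelian (the paper uses Lie's theorem over $\bb{Q}_p$ together with the non-self-similarity of $3$-dimensional lattices with one-dimensional derived subalgebra, Proposition \ref{pll1nssp}); your appeal to ``the derived series'' gestures at this reduction to the metabelian case but does not supply the argument.
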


\noindent
Observe that the `if' part of the theorem holds in greater generality
(Proposition \ref{here2}).
It is worth noting that during the last decade the groups listed in 
Theorem \ref{tmainresult}
have appeared in the literature  in different contexts (see, for example, \cite{KSjalg11}, \cite{Quad14}, \cite{KSquart14}, \cite{Sn15} and \cite{Sn09}).
The reader will find a more detailed account
of the related results at the end of Section \ref{sresgp}.

Let $K$ be a field. The absolute Galois group of $K$ is the profinite group $G_K=\textrm{Gal}(K_s/K)$, where $ K_s$ is a separable closure of $K$. The maximal pro-$p$ Galois group of $K$, denoted by $G_K(p)$, is the maximal pro-$p$ quotient of $G_K$.  More precisely, $G_K(p) =  \textrm{Gal}(K(p)/K)$, where  $K(p)$ is 
the composite of all finite Galois $p$-extensions of $K$ (inside $K_s$). Describing absolute Galois groups of fields among profinite groups is one of the most important problems in Galois theory. Already describing  $G_K(p)$  among pro-$p$ groups is a remarkable challenge. Theorem \ref{tmainresult} and a result of Ware \cite{Ware92} yield the following.

\begin{theoremx}\label{tmainB}
Let $p$ be a prime, and let $G$ be a non-trivial 
solvable torsion-free $p$-adic analytic pro-$p$ group.
Suppose that $p >  \textrm{dim}(G)$. 
Then $G$ is strongly hereditarily self-similar of index $p$ 
if and only if $G$ is isomorphic to the maximal pro-$p$ Galois 
group of some field that contains a primitive $p$-th root of unity. 
\end{theoremx}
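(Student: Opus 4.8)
The plan is to deduce Theorem~\ref{tmainB} directly from Theorem~\ref{tmainresult} by identifying the explicit list of groups it produces with the solvable torsion-free $p$-adic analytic pro-$p$ groups of dimension less than $p$ that occur as maximal pro-$p$ Galois groups of fields containing a primitive $p$-th root of unity. First I would invoke Theorem~\ref{tmainresult}: under the standing hypotheses on $G$ (non-trivial, solvable, torsion-free, $p$-adic analytic, $p>\dim(G)$), being strongly hereditarily self-similar of index $p$ is \emph{equivalent} to $G$ being isomorphic to $\mathbb{Z}_p^d$ for some $d\ges 1$, or to $G^d(s)=\mathbb{Z}_p\ltimes\mathbb{Z}_p^{d-1}$ for some $d\ges 2$ and some integer $s\ges 1$. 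Thus the whole content of Theorem~\ref{tmainB} reduces to the claim that, within this same class of groups, membership in the above list is equivalent to being a maximal pro-$p$ Galois group of a field containing $\zeta_p$. This converts a statement about self-similarity into a purely Galois-theoretic one, and it is here that Ware's result enters.

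For the implication ``self-similar $\Rightarrow$ Galois'', I would exhibit each group in the list as such a Galois group. The abelian case $\mathbb{Z}_p^d$ is essentially classical: it suffices to produce a field $K\ni\zeta_p$ whose maximal pro-$p$ extension has Galois group $\mathbb{Z}_p^d$, which Ware's realization results supply. For the metabelian groups $G^d(s)$ I would appeal directly to Ware \cite{Ware92}, whose realization theorem yields, over a suitable field containing $\zeta_p$, precisely the groups $\mathbb{Z}_p\ltimes\mathbb{Z}_p^{d-1}$ with the canonical generator acting as the scalar $1+p^s$. For the converse ``Galois $\Rightarrow$ self-similar'', I would use the structural half of Ware's work: the presence of $\zeta_p$ in $K$ imposes strong cohomological restrictions on $G_K(p)$ (the degree-two relations being quadratic, via Merkurjev--Suslin), and these restrictions, combined with solvability, torsion-freeness, $p$-adic analyticity and $p>\dim(G)$, force $G_K(p)$ to be isomorphic to some $\mathbb{Z}_p^d$ or $G^d(s)$. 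By Theorem~\ref{tmainresult} such a group is then strongly hereditarily self-similar of index $p$, which closes the loop.

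The main obstacle I anticipate is the precise matching in the converse direction: one must verify that the relational constraints coming from $\zeta_p\in K$ admit \emph{no} solvable torsion-free $p$-adic analytic realization outside the list --- in particular that they exclude non-scalar actions of $\mathbb{Z}_p$ on $\mathbb{Z}_p^{d-1}$, derived length exceeding $2$, and any borderline behaviour as $\dim(G)$ approaches $p$. If Ware's classification is stated with exactly these hypotheses, this matching is bookkeeping; if it is stated more narrowly, the gap would have to be filled by combining his realizability criterion with the powerful (uniform) structure of $p$-adic analytic pro-$p$ groups of dimension less than $p$, and this is where any genuine work in the proof of Theorem~\ref{tmainB} would reside.
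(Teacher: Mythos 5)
Your proposal follows essentially the same route as the paper: reduce via Theorem~\ref{tmainresult} to matching the list $\{\mathbb{Z}_p^d,\ G^d(s)\}$ against Ware's two results --- his structure theorem (a finitely generated $G_K(p)$ with $\zeta_p\in K$ and no non-abelian free pro-$p$ subgroup is free abelian of finite rank or some $G^d(s)$, with Quadrelli removing the $\zeta_{p^2}$ hypothesis) and his realization theorem (each listed group is $G_K(p)$ for an explicit iterated Laurent series field). Your worry about the converse direction is unfounded: Ware's hypotheses are exactly ``finitely generated and no non-abelian free pro-$p$ subgroup,'' which a solvable $p$-adic analytic group satisfies automatically, so the matching really is bookkeeping. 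The one point you omit is $p=2$: Ware's theorem is stated for odd $p$, so your argument as written does not cover that case. It is harmless because $p>\dim(G)\ges 1$ forces $\dim(G)=1$ and $G\simeq\mathbb{Z}_2$, which the paper realizes as $G_{\mathbb{F}_q}(2)$ for a finite field of odd characteristic (every field contains the primitive square root of unity $-1$), but this case does need to be said separately.
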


\noindent
Similarly to Theorem \ref{tmainresult}, the `if' part holds
in greater generality (Proposition \ref{maximal}).

The proof of Theorem \ref{tmainresult} is by induction on $d=\mr{dim}(G)$.
As mentioned above, for $d=1,2$ matters are trivial, while for $d=3$
interesting phenomena start to occur.
Indeed, as basis for the induction, one has to consider the case $d=3$,
and this leaded us to the classification result below. 
This result is interesting on its own right since it completes the classification
started by \cite[Theorem B]{NS2019}
of the 3-dimensional torsion-free $p$-adic analytic pro-$p$ groups
that are self-similar of index $p$.

\begin{theoremx}\label{tmaingroups}
Let $p\ges 5$ be a prime, and fix $\rho\in\bb{Z}_p^*$ not a square modulo $p$.
Let $G$ be a 3-dimensional solvable torsion-free $p$-adic analytic pro-$p$ group.
Then the following holds.
\begin{enumerate}
\item $G$ is self-similar of index $p^2$. 
\item Let $L$ be the $\bb{Z}_p$-Lie lattice associated with $G$. 
Then $G$ is self-similar of index $p$ if and only if $L$ is isomorphic
to a Lie lattice presented in the following list (cf. Remark \ref{rclassif};
the parameters below take values $s,r,t\in\bb{N}$, $c\in\bb{Z}_p$ and
$\vep\in\{0,1\}$).
\begin{enumerate}
\item 
$
\langle\, x_0, x_1, x_2   \mid 
[x_1, x_2] =  0,\,  
[x_0, x_1] = 0,\, 
[x_0, x_2] =  0 \,
\rangle $.
\item 
For $s\ges 1$, 
$
\langle\, x_0, x_1, x_2   \mid 
[x_1, x_2] =  0,\,  
[x_0, x_1] = p^sx_1,\, 
[x_0, x_2] =  p^sx_2 \,
\rangle$.
\item 
For $s,r\ges 1$ and $v_p(c)=1$,
$$
\langle\, x_0, x_1, x_2   \mid 
[x_1, x_2] =  0,\,  
[x_0, x_1] = p^sx_1 + p^{s+r}cx_2,\, 
[x_0, x_2] =  p^{s+r}x_1 + p^sx_2 \,
\rangle .
$$
\item 
$
\langle\, x_0, x_1, x_2   \mid 
[x_1, x_2] =  0,\,  
[x_0, x_1] = p^{s+1}\rho^\vep x_2,\, 
[x_0, x_2] =  p^{s}x_1  \,
\rangle
$.
\item For $s\ges 1$,
$
\langle\, x_0, x_1, x_2   \mid 
[x_1, x_2] =  0,\,  
[x_0, x_1] = p^{s} x_2,\, 
[x_0, x_2] =  p^{s}x_1  \,
\rangle$.
\item For $r\ges 1$ and $v_p(c)= 1$,
$$
\langle\, x_0, x_1, x_2   \mid 
[x_1, x_2] =  0,\,  
[x_0, x_1] = p^{s+r}x_1+p^scx_2,\, 
[x_0, x_2] =  p^{s}x_1  \,
\rangle.$$
\item For $s\ges 1$ and $v_p(1+4c)=1$,
$$
\langle\, x_0, x_1, x_2   \mid 
[x_1, x_2] =  0,\,  
[x_0, x_1] = p^{s}x_1+p^scx_2,\, 
[x_0, x_2] =  p^{s}x_1  \,
\rangle.$$
\end{enumerate}
\end{enumerate}
\end{theoremx}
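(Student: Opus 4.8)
The plan is to argue entirely on the level of $\bb{Z}_p$-Lie lattices. Since $p\ges 5>3=\dim(G)$, the Lazard correspondence lets me replace $G$ by its associated lattice $L$, and statement (1) is then immediate: Proposition \ref{pgstrong2m} already gives that $G$ is strongly hereditarily self-similar of index $p^{2}$, hence in particular self-similar of index $p^{2}$. For statement (2) I record two facts. By the classification recorded in Remark \ref{rclassif}, every $3$-dimensional solvable torsion-free $\bb{Z}_p$-Lie lattice is metabelian of the form $L=\bb{Z}_p x_0\ltimes_\alpha A$ with $A=\bb{Z}_p x_1\oplus\bb{Z}_p x_2$ an abelian ideal and $\alpha=\mr{ad}(x_0)|_A\in M_2(\bb{Z}_p)$, its isomorphism type being the class of $\alpha$ under $GL_2(\bb{Z}_p)$-conjugacy together with scaling by $\bb{Z}_p^*$. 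Moreover, by the criterion of \cite{NS2019}, $L$ is self-similar of index $p$ exactly when there is a subalgebra $M\les L$ with $|L/M|=p$ and a Lie homomorphism $f\colon M\map L$ whose core --- the largest ideal $I$ of $L$ contained in $M$ with $f(I)\subseteq I$ --- is zero.

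The first reduction pins down the index-$p$ subalgebras: there are only the subalgebra $M_0:=p\bb{Z}_p x_0\oplus A$ containing $A$, and, up to an automorphism of $L$, the subalgebras $M_N:=\bb{Z}_p x_0\oplus N$ with $N\subset A$ an $\alpha$-invariant sublattice of index $p$ (the latter exist precisely when $\ol\alpha\in M_2(\bb{F}_p)$ has an eigenvector over $\bb{F}_p$). Since $[L,L]=\alpha(A)$ and the ideals of $L$ lying in $A$ are exactly the $\alpha$-invariant sublattices, for any $f$ with $f(M\cap A)\subseteq A$ the core is governed by a single intertwining datum: writing $f(x_0)=b_0x_0+a$ with $a\in A$ and $\gamma=f|_{M\cap A}\colon M\cap A\map A$, the bracket relations force $b_0\,\alpha\gamma=\gamma\alpha$, and zero core means $\gamma$ is an isomorphism onto $A$ stabilizing no nonzero $\alpha$-invariant sublattice. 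Everything is thus controlled by the content $p^{s}$ of $\alpha$ (the least valuation of its entries), by the \'etale algebra $\bb{Q}_p[\alpha]$, and by the $\bb{Z}_p[\alpha]$-module structure of $A$; these are read off from the valuations of the trace, determinant and discriminant of $\alpha$, and they are what isolate the seven families.

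For the `if' part I produce such an $f$ for each of (a)--(g). Transporting $\alpha|_{M\cap A}$ to $A$ along $\gamma$ rewrites the relation as $\gamma\,\alpha|_{M\cap A}\,\gamma^{-1}=b_0\alpha$, so $b_0$ must fix the multiset of eigenvalues of $\alpha$. When $\alpha$ is scalar (cases (a),(b)) every $\gamma$ intertwines, and a $\gamma$ swapping two complementary lines while contracting one of them is an expanding isomorphism onto $A$ that stabilizes no sublattice. When $\mr{tr}(\alpha)=0$ (cases (d),(e)) the eigenvalues are $\pm\lambda$, so the choice $b_0=-1$ makes a line-swapping $\gamma$ an intertwiner with the same effect --- this is why the trace-zero forms occur, and in (d) the parameter $\rho^{\vep}$ distinguishes the two ramified fields that arise. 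For the remaining ramified cases (c),(f),(g) there is no $\alpha$-invariant line at all, and the conditions $v_p(c)=1$ and $v_p(1+4c)=1$ guarantee that $A$ possesses an index-$p$ invariant sublattice $N$ that is $\bb{Z}_p[\alpha]$-isomorphic to $A$; taking $b_0=1$ and $\gamma\colon N\map A$ such an isomorphism, the only ideals to avoid are the finite-index $\alpha$-invariant sublattices, all of which the expanding $\gamma$ enlarges.

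The main obstacle is the `only if' direction: for every lattice outside the list I must show the core is nonzero for all admissible pairs $(M,f)$, and three points require care. First, if $\det\alpha=0$ --- the non-invertible case, which includes the Heisenberg lattice --- a direct argument shows the characteristic ideal $[L,L]=\mr{im}\,\alpha$ (or else $\ker\alpha$) is forced $f$-stable, so these are excluded; thereafter $\alpha$ is invertible, $b_0^{2}=1$, and $b_0=-1$ is available only when $\mr{tr}(\alpha)=0$. Second, when $b_0=1$ is forced, $\gamma$ commutes with $\alpha$: in the split case with distinct eigenvalues this makes $\gamma$ diagonal, leaving an eigenline $f$-stable, while in the irreducible case an expanding $\gamma$ needs an index-$p$ submodule isomorphic to $A$, which occurs exactly when $\bb{Q}_p[\alpha]$ is ramified; the unramified fields, and the off-threshold conductor values with $v_p(c)\ges2$ or $v_p(1+4c)\ges2$, are thereby ruled out (or, after conjugation and scaling, reduce to a listed form). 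Third, and most delicate, one must eliminate homomorphisms with $f(M\cap A)\not\subseteq A$, for which the core is not visible inside $A$: for $M_0$ one notes that $f(A)\subseteq A$ would already make the ideal $A$ invariant, so zero core needs $f(A)\not\subseteq A$, which is excluded by tracking the $x_0$-components of $f(x_1),f(x_2)$ through the relation $[f(x_1),f(x_2)]=0$ and the injectivity of $\alpha$, and the subalgebras $M_N$ with $f(N)\not\subseteq A$ are disposed of in the same way. Organizing these computations according to the reduction type of $\ol{\alpha/p^{s}}$ then yields the families (a)--(g) exactly.
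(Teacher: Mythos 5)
Your reduction to Lie lattices via Proposition \ref{Prop-NS2}, and your treatment of part (1), match the paper. Your structural setup for part (2) is also sound: the classification of index-$p$ subalgebras as $M_0$ and, up to the automorphism $x_0\mapsto x_0-ex_i$, the $M_N$ with $N$ an $\alpha$-invariant index-$p$ sublattice of $A$, is correct, as are the constraints $b_0^2=1$ and ($b_0=-1\Rightarrow\mathrm{tr}\,\alpha=0$) coming from determinant and trace. Recasting the case analysis in terms of the order $\mathbb{Z}_p[\alpha/p^s]$ in the quadratic \'etale algebra $\mathbb{Q}_p[\alpha]$ and the $\mathbb{Z}_p[\alpha]$-module structure of $A$ is a genuinely more conceptual organization than the paper's, which instead grinds through the normal forms $L_0,\dots,L_7$ and the matrix equation $FB=F_{00}AF$ (Lemmas \ref{lmetmor}, \ref{lLe}, \ref{lC7Feq} and Propositions \ref{pl2nssp}, \ref{pl7nssp}).

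However, as written the `only if' direction is a roadmap rather than a proof, and that is where essentially all of the work lies. Concretely: (i) you assert that an expanding commuting $\gamma$ exists exactly when $A$ has an index-$p$ submodule $\mathbb{Z}_p[\alpha]$-isomorphic to $A$, and that this isolates the thresholds $v_p(c)=1$ and $v_p(1+4c)=1$; ruling out the non-maximal orders (e.g.\ $v_p(4c+a^2)\ges 2$ with $a$ a unit, the content of Lemma \ref{lC7thenI}(\ref{lC7thenI4})) and the unramified algebras requires an actual analysis of the relevant fractional ideals and multiplier rings, which you do not supply. (ii) In the split case with distinct eigenvalues and $b_0=1$, ``$\gamma$ diagonal leaves an eigenline $f$-stable'' hides the needed valuation count: each eigenvalue $\mu_i$ of $\gamma$ must satisfy $v_p(\mu_i)\les -1$ to avoid an invariant sub-ideal of the corresponding isolated eigenline, while $\gamma(N)\subseteq A$ and $[A:N]=p$ force $v_p(\mu_1)+v_p(\mu_2)\ges -1$, a contradiction; this should be spelled out. (iii) Your mechanism for excluding $f(M\cap A)\not\subseteq A$ --- the relation $[f(x_1),f(x_2)]=0$ plus injectivity of $\alpha$ --- only shows that otherwise $f$ has a nonzero kernel meeting $M\cap A$; one must still convert that kernel into a $\varphi$-invariant ideal of $L$ (the paper instead uses $f([x_0,y])=[f(x_0),f(y)]$ together with invertibility of $\alpha$ and isolatedness of $A$, cf.\ Lemma \ref{lmetmor}(\ref{lmetmor2})). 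None of these is unfixable, but each is a genuine step, and together with the $\det\alpha=0$ case (Proposition \ref{pll1nssp}, which needs the injectivity Lemma \ref{lsiminjll1}) they constitute the theorem.
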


In dimension 3, Theorem \ref{tmaingroups} and  \cite[Theorem B]{NS2019} yield the following stronger version of Theorem  \ref{tmainresult}.

\begin{theoremx}\label{tmainD}
Let $p\ges 5$ be a prime, and let $G$ be a 
3-dimensional torsion-free $p$-adic analytic pro-$p$ group.
Then the following are equivalent.
\begin{enumerate}
\item $G$ is hereditarily self-similar of index $p$.
\item $G$ is strongly hereditarily self-similar of index $p$.
\item $G$ is isomorphic  to $\bb{Z}_p^3$ or to $G^3(s)$ for some integer $s\ges 1$.
\end{enumerate}
\end{theoremx}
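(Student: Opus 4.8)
The plan is to prove Theorem \ref{tmainD} by establishing the cycle of implications $(2)\Rightarrow(1)\Rightarrow(3)\Rightarrow(2)$, leveraging the three-dimensional classification results already in hand. The implication $(2)\Rightarrow(1)$ is immediate from the definitions: strong hereditary self-similarity of index $p$ requires that $G$ and \emph{all} its non-trivial finitely generated closed subgroups be self-similar of index $p$, and in particular every open subgroup qualifies, which is exactly hereditary self-similarity of index $p$. Conversely, the implication $(3)\Rightarrow(2)$ is not something I would prove by hand here; instead I would invoke the `if' direction established in greater generality (Proposition \ref{here2}), which guarantees that $\bb{Z}_p^3$ and each $G^3(s)$ are strongly hereditarily self-similar of index $p$. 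Thus the genuine content lies in the implication $(1)\Rightarrow(3)$.

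For $(1)\Rightarrow(3)$, I would first observe that a $3$-dimensional torsion-free $p$-adic analytic pro-$p$ group $G$ is either solvable or unsolvable, and treat the two cases using the respective classification theorems. If $G$ is unsolvable, then by \cite[Theorem B]{NS2019} I would want to rule out every unsolvable candidate as being hereditarily self-similar of index $p$: since hereditary self-similarity of index $p$ forces $G$ itself (an open subgroup of itself) to be self-similar of index $p$, and the cited theorem pins down exactly which unsolvable $G$ are self-similar of index $p$, I would need to check that none of those survivors has \emph{every} open subgroup self-similar of index $p$. The expected mechanism is that for an unsolvable $G$ one can find an open subgroup whose associated Lie lattice falls outside the permitted list, contradicting hereditary self-similarity.

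For the solvable case, the key is Theorem \ref{tmaingroups}(2), which lists the isomorphism types of Lie lattices $L$ whose associated group is self-similar of index $p$. Hereditary self-similarity of index $p$ demands that this self-similarity persist under passage to arbitrary open subgroups. Passing to an open subgroup corresponds, on the level of Lie lattices, to passing to an open sublattice (up to rescaling by powers of $p$), and I would track how the structure constants in families (a)--(g) transform. The plan is to show that only the abelian lattice (a) and the lattice (b)---whose associated groups are $\bb{Z}_p^3$ and $G^3(s)$ respectively---have the property that \emph{every} open sublattice again lies in the permitted list, while each of the families (c)--(g) admits some open sublattice that is pushed outside the list (for instance by changing the $p$-adic valuations of the off-diagonal structure constants in a way that violates the valuation constraints such as $v_p(c)=1$ or $v_p(1+4c)=1$).

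The main obstacle I anticipate is the bookkeeping in this last step: one must verify, for each of the families (c)--(g), that the defining valuation and scaling conditions are \emph{not} stable under the sublattice operations that correspond to taking open subgroups, and this requires understanding precisely how an open subgroup's Lie lattice is computed and how its invariants (the relative valuations of structure constants, and discriminant-type quantities like $1+4c$) evolve. The delicate point is to exhibit, in each non-surviving family, an explicit open subgroup whose lattice provably fails the membership criteria of Theorem \ref{tmaingroups}(2), while simultaneously confirming that families (a) and (b) are genuinely closed under the operation. Once this stability analysis is complete, combining it with the identification of the surviving lattices as $\bb{Z}_p^3$ and $G^3(s)$ yields $(1)\Rightarrow(3)$, closing the cycle.
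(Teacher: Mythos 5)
Your architecture is sound and, at bottom, coincides with the paper's: the implication $(3)\Rightarrow(2)$ is Proposition \ref{here2}, $(2)\Rightarrow(1)$ is definitional, and the substance is $(1)\Rightarrow(3)$, which the paper delegates to Proposition \ref{hereddim3v2} (via the group--lattice dictionary of Proposition \ref{Prop-NS2}). The ``stability analysis'' you defer as bookkeeping is precisely the content of the proof of Proposition \ref{hereddim3v2}: for each solvable family outside $L_{\ref*{C0}}$ and $L_{\ref*{C1}}(s)$ one exhibits a finite-index subalgebra such as $\gen{x_0,px_1,x_2}$, computes its isomorphism type (e.g.\ $L_{\ref*{C2}}(s,r-1,p^2c)$ or $L_{\ref*{C7}}(s-1,pa,p^2c)$), and kills it with Propositions \ref{pll1nssp}, \ref{pl2nssp} or \ref{pl7nssp}; in the unsolvable case one rescales a basis to force strictly increasing valuations of the structure constants and invokes \cite[Theorem 2.32]{NS2019}. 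Two small remarks. First, the paper obtains $(1)\Leftrightarrow(2)$ more directly than your cycle does: since every non-trivial closed subgroup of a $3$-dimensional $G$ that is not open has dimension $1$ or $2$, and such groups are automatically strongly hereditarily self-similar of index $p$ by \cite[Proposition 1.5]{NS2019}, hereditary self-similarity already implies the strong version in dimension $3$ (Remark \ref{rghered}); your detour through $(3)$ is logically fine but hides this cheap observation. Second, make sure you route the passage from open subgroups to finite-index subalgebras through Proposition \ref{Prop-NS2}; this is where the hypothesis $p\ges 5$ (i.e.\ $p>\dim G$) is actually consumed, and it is the reason one may work entirely on the Lie lattice side.
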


\vspace{3mm}

We believe that one can drop the assumption of solvability in Theorem \ref{tmainresult}  even in higher dimension.

\begin{conjx}\label{conjE}
Let $p$ be a prime, and 
let $G$ be a torsion-free $p$-adic analytic pro-$p$ group of dimension $d$. 
Suppose that $p> d$. 
Then $G$ is strongly hereditarily self-similar of index $p$ if and only if $G$ is isomorphic  to $\bb{Z}_p^d$ for $d \ges 1$ or to 
$G^d(s)$ for $d\ges 2$ and some integer $s\ges 1$.
\end{conjx}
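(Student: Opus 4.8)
The plan is to keep the `if' direction untouched, since by Proposition~\ref{here2} it already holds with no solvability hypothesis; so the entire content lies in the `only if' direction. The decisive observation is that every group in the conjectured list is solvable ($\bb{Z}_p^d$ is abelian and $G^d(s)$ is metabelian). Hence the statement is equivalent to the conjunction of Theorem~\ref{tmainresult} with the reduction: \emph{if $G$ is torsion-free $p$-adic analytic of dimension $d<p$ and strongly hereditarily self-similar of index $p$, then $G$ is solvable.} Note that Theorem~\ref{tmainD} already settles the case $d=3$ (there $p>d$ forces $p\ges 5$), so one may assume $d\ges 4$. I would fix such a $G$, assume it unsolvable, and derive a contradiction.

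The first structural input is that strong hereditary self-similarity of index $p$ passes to every nontrivial closed subgroup $H\les G$: any nontrivial finitely generated closed subgroup of $H$ is also one of $G$, hence self-similar of index $p$ by hypothesis, and since $G$ is $p$-adic analytic every closed subgroup is finitely generated; as $G$ is torsion-free, every nontrivial closed subgroup has positive dimension and inherits the property. The natural first attempt is therefore to locate inside $G$ a $3$-dimensional \emph{unsolvable} closed subgroup $H$: such an $H$ would be torsion-free, $3$-dimensional, unsolvable, and strongly hereditarily self-similar of index $p$, contradicting Theorem~\ref{tmainD}, whose list is solvable. Via the Lazard correspondence (available because $p>d$ forces saturability), producing such an $H$ amounts to finding a $3$-dimensional nonsolvable sub-Lie-lattice of $L=\log G$; over $\bb{Q}_p$ this is the search for a $3$-dimensional simple subalgebra (a form of $\mfr{sl}_2$) of the nonsolvable algebra $\mfr{g}=L\otimes\bb{Q}_p$, which, once intersected with $L$ and rescaled, exponentiates to the desired subgroup.

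This reduction succeeds whenever $\mfr{g}$ actually contains such a subalgebra --- for instance when a Levi factor of $\mfr{g}$ carries a nonzero nilpotent element, so that a Jacobson--Morozov $\mfr{sl}_2$-triple is available, or more generally when the relevant simple factors are split or of quaternionic type. I expect the main obstacle to be precisely the opposite case. Over $\bb{Q}_p$ there exist anisotropic simple Lie algebras with no nonzero nilpotent elements, e.g.\ $\mfr{sl}_1(D)$ for a central division algebra $D$ of odd degree $n\ges 3$ (of dimension $n^2-1<p$ for $p$ large), and such an algebra contains \emph{no} $3$-dimensional simple subalgebra, since $D$ then contains no quaternion subalgebra. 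For a group $G$ of this type the only nonsolvable closed subgroup is $G$ itself, so one cannot descend to dimension $3$ and must instead show directly that $G$ fails to be self-similar of index $p$. In the Lie-lattice translation of the self-similar index-$p$ condition from \cite{NS2019} --- an index-$p$ ideal $M\trianglelefteq L$ together with a Lie homomorphism $\psi\colon M\to L$ whose only $\psi$-invariant ideal of $L$ contained in $M$ is $0$ --- the simplicity of $\mfr{g}$ forces $\psi\otimes\bb{Q}_p$ to extend to an endomorphism of $\mfr{g}$, which is either $0$ (leaving $M$ invariant, so not simple) or an automorphism; the hard part is to rule out the second alternative by showing that every such integral automorphism preserves a nonzero ideal-sublattice of $L$ inside $M$. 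Making this final step uniform over all simple $\mfr{g}$ of dimension $<p$, rather than the rank-one analysis that suffices in \cite{NS2019}, is, I believe, the genuine difficulty, and the reason the statement is recorded here only as a conjecture.
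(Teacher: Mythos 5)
The statement you are asked about is Conjecture~\ref{conjE}: the paper offers no proof of it, and explicitly records it as open. So there is no argument of the authors to compare yours against, and the only question is whether your proposal closes the conjecture. It does not, and you say so yourself: your final paragraph concedes that the case where $\mfr{g}=L_G\otimes\bb{Q}_p$ has an anisotropic simple factor with no $3$-dimensional simple subalgebra is left unresolved. That concession is exactly where the genuine gap sits. What you have correctly established is the reduction: the `if' direction is Proposition~\ref{here2}; every group in the list is solvable; hence the conjecture is equivalent to Theorem~\ref{tmainresult} together with the claim that a strongly hereditarily self-similar group of index $p$ with $p>\mr{dim}(G)$ is solvable; the cases $d\les 3$ follow from Theorem~\ref{tmainD} and the triviality of dimensions $1,2$; and for $d\ges 4$ the presence of a nonzero nilpotent in a Levi factor yields, via Jacobson--Morozov and Lazard, a $3$-dimensional unsolvable closed subgroup contradicting Theorem~\ref{tmainD}. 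This is a sensible and, as far as it goes, correct skeleton, and your identification of the anisotropic forms $\mfr{sl}_1(D)$ (odd degree $\ges 3$, no quaternion subalgebra, hence no $3$-dimensional simple subalgebra) as the essential obstruction is a genuine insight consistent with why the authors stop at the solvable case.

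But the residual case is not a technicality to be waved at: for such a $G$ one must prove directly that no index-$p$ virtual endomorphism is simple, and nothing in the paper's machinery (which is built for metabelian lattices with a codimension-one abelian ideal, plus the rank-one unsolvable analysis of \cite{NS2019}) applies. Two smaller inaccuracies in your sketch of that final step: in the Lie-lattice translation, $M$ is a finite-index \emph{subalgebra}, not an ideal of $L$; and the dichotomy ``$\psi\otimes\bb{Q}_p$ is zero or an automorphism'' does not follow from simplicity of $\mfr{g}$ alone when $\mfr{g}$ is merely reductive or has several factors, so even the setup of the endgame needs care. As it stands the proposal is a reduction of the conjecture to a narrower open problem, not a proof.
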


\vspace{3mm}

\noindent 
\textbf{Main strategy and outline of the paper.} 
For the proof of the main results we use Lie methods.
More precisely, we use the language of 
virtual endomorphisms (see, for instance, \cite[Proposition 1.3]{NS2019}) 
to translate self-similarity problems on 
$p$-adic analytic groups to problems on $\bb{Z}_p$-Lie lattices 
(Proposition \ref{Prop-NS2}). Recall from \cite{NS2019} that
a $\bb{Z}_p$-Lie lattice $L$
is said to be self-similar of index $p^k$ if there exists
a homomorphism of algebras $\varphi:M\rar L$ where $M\subseteq L$ 
is a subalgebra of index $p^k$ and $\varphi$ is simple, which means
that there are no non-zero ideals of $L$ that are $\varphi$-invariant.

In \textbf{Section \ref{seclie}} we prove results on Lie lattices,
and for the main ones mentioned here we assume $p\ges 3$. 
The first main result of that section is \textbf{Theorem \ref{smain3}}, where
 we classify the 3-dimensional solvable $\bb{Z}_p$-Lie lattices
that are self-similar of index $p$, complementing the analogue
result for unsolvable lattices proven in \cite[Theorem 2.32]{NS2019}.
In Definition \ref{dhss} we introduce the notion of (strongly)
hereditarily self-similar Lie lattice. 
Thanks to the classification result, we are able to prove 
\textbf{Proposition \ref{hereddim3v2}}, which is a classification
of the 3-dimensional $\bb{Z}_p$-Lie lattices that are (strongly)
hereditarily self-similar of index $p$.
This result is particularly relevant since it is used as the basis
of the induction (which is on dimension) for the proof of the second main result
on Lie lattices, \textbf{Theorem \ref{tshssp}}, which
provides a classification of the solvable $\bb{Z}_p$-Lie lattices 
that are strongly hereditarily self-similar of index $p$.
At the beginning of Section \ref{seclie} the reader will find 
a more detailed account of its structure.

In \textbf{Section \ref{sresgp}} we prove the main theorems of the paper, 
and provide additional results on hereditarily self-similar groups.
We observe that 
Theorem \ref{tmaingroups} follows from Theorem \ref{smain3},
Theorem \ref{tmainD} follows from Proposition \ref{hereddim3v2}, and  
Theorem \ref{tmainresult} follows from Theorem \ref{tshssp}.
In \textbf{Section \ref{sopen}} we state some
open problems that we consider challenging
and that we believe will stimulate future research on the subject.

\vspace{5mm}   
   
\noindent 
\textbf{Notation.} Throughout the paper, $p$ denotes a prime number,
and $\equiv_p$ denotes equivalence modulo $p$. 
For $p\ges 3$ we fix $\rho\in\bb{Z}_p^*$ not a square modulo $p$.
We denote  the $p$-adic valuation
by  $v_p:\bb{Q}_p\rar \bb{Z}\cup\{\infty\}$. The set $\bb{N}$ 
of natural numbers is assumed to contain $0$.
For the lower central series $\gamma_n(G)$ and the derived series 
$\delta_n(G)$ of a group 
(or Lie algebra) $G$ we use
the convention $\gamma_0(G)=G$ and $\delta_0(G)=G$.
By a $\bb{Z}_p$-lattice we mean a finitely generated free $\bb{Z}_p$-module.
Let $L$ be a $\bb{Z}_p$-lattice. 
When $M\subseteq L$ is a submodule, we denote
the isolator of $M$ in $L$ by 
$\mr{iso}_L(M):=\{x\in L:\, \exists\,k\in\bb{N}\quad p^kx\in M\}$.
We denote by $\gen{x_1,...,x_n}$ the \textit{submodule} of $L$ generated
by $x_1,...,x_n\in L$.
When $L$ has the structure of a Lie algebra, we denote its center
by $Z(L)$.

\vspace{5mm}

\noindent
\textbf{Aknowledgements.}
The second author
thanks the Heinrich Heine University in D\"usseldorf for its hospitality.


\section{Results on Lie lattices}\label{seclie}

In this section, which is self-contained, we prove 
results about self-similarity of $\bb{Z}_p$-Lie lattices. 
The main results, mentioned in the Introduction, are proved
under the assumption that $p\ges 3$. On the other hand,
most of the auxiliary results are valid and proved
for any $p$, and we believe that they constitute a large part of the
work needed to generalize the main results to $p=2$.
The structure of the section is as follows.
In Section \ref{smet} we prove some basic results on
$\bb{Z}_p$-Lie lattices that admit an abelian
ideal of codimension $1$;
these results are used both for the study of 3-dimensional lattices
and of lattices in higher dimension.
After two preparatory technical sections (Sections \ref{sssres} and \ref{snssres3}), 
in Section \ref{smain3} we prove the main
Theorem \ref{talgpodd}.
After another preparatory section (Section \ref{snssresh}),
in Section \ref{sechss} we prove the main 
Proposition \ref{hereddim3v2} and Theorem \ref{tshssp}.
Apart from the main results, a few statements are worth to be
mentioned here, for instance, Propositions \ref{pabelianid},
\ref{pll1nssp}, and \ref{heranyindex}.
The most difficult technical results are the proofs of non-self-similarity 
of Propositions \ref{pl7nssp} and \ref{phdn}.
\\

We will be dealing with several families of $\bb{Z}_p$-Lie lattices, which
we list in the definition below. For $p\ges 3$, families from (\ref{C0})
to (\ref{C5}) are needed for the classification of 3-dimensional solvable
$\bb{Z}_p$-Lie lattices (see Remark \ref{rclassif}).
Family (\ref{C6}) generalizes families (\ref{C0}) and (\ref{C1}), while family
(\ref{C7}) generalizes families (\ref{C4}) and (\ref{C5}).

\begin{definition}\label{dlielat}
We define eight families of 3-dimensional solvable $\bb{Z}_p$-Lie lattices through 
presentations.
\begin{enumerate}
\setcounter{enumi}{-1}
\item \label{C0}
$L_{\ref*{C0}}= 
\langle\, x_0, x_1, x_2   \mid 
[x_1, x_2] =  0,\,  
[x_0, x_1] = 0,\, 
[x_0, x_2] =  0 \,
\rangle $.
\item \label{C1}
For $s\in\bb{N}$, 
$L_{\ref*{C1}}(s)= 
\langle\, x_0, x_1, x_2   \mid 
[x_1, x_2] =  0,\,  
[x_0, x_1] = p^sx_1,\, 
[x_0, x_2] =  p^sx_2 \,
\rangle $.
\item \label{C2}
For $s,r\in\bb{N}$ and 
$c\in\bb{Z}_p$,
$$L_{\ref*{C2}}(s,r,c)=
\langle\, x_0, x_1, x_2   \mid 
[x_1, x_2] =  0,\,  
[x_0, x_1] = p^sx_1 + p^{s+r}cx_2,\, 
[x_0, x_2] =  p^{s+r}x_1 + p^sx_2 \,
\rangle .
$$
\item \label{C3}
For $s\in\bb{N}$, 
$L_{\ref*{C3}}(s)=
\langle\, x_0, x_1, x_2   \mid 
[x_1, x_2] =  0,\,  
[x_0, x_1] = 0,\, 
[x_0, x_2] =  p^sx_1 \,
\rangle $.
\item \label{C4}
For $p\ges 3$, $s,t\in\bb{N}$ and $\vep\in\{0,1\}$,
$$L_{\ref*{C4}}(s,t,\vep)= 
\langle\, x_0, x_1, x_2   \mid 
[x_1, x_2] =  0,\,  
[x_0, x_1] = p^{s+t}\rho^\vep x_2,\, 
[x_0, x_2] =  p^{s}x_1  \,
\rangle .
$$
\item \label{C5}
For $s,r\in\bb{N}$ and $c\in \bb{Z}_p$,
$$L_{\ref*{C5}}(s,r,c)= 
\langle\, x_0, x_1, x_2   \mid 
[x_1, x_2] =  0,\,  
[x_0, x_1] = p^{s+r}x_1+p^scx_2,\, 
[x_0, x_2] =  p^{s}x_1  \,
\rangle .$$
\item \label{C6}
For $a\in\bb{Z}_p$, 
$L_{\ref*{C6}}(a)= 
\langle\, x_0, x_1, x_2   \mid 
[x_1, x_2] =  0,\,  
[x_0, x_1] = ax_1,\, 
[x_0, x_2] =  ax_2 \,\rangle $.
\item \label{C7}
For $s\in\bb{N}$ and $a,c\in\bb{Z}_p$,
$$L_{\ref*{C7}}(s,a,c)= 
\langle\, x_0, x_1, x_2   \mid 
[x_1, x_2] =  0,\,  
[x_0, x_1] = p^sax_1 + p^{s}c x_2, \, 
[x_0, x_2] =  p^{s}x_1 \,
\rangle .
$$
\end{enumerate}
\end{definition}

\subsection{On a class of metabelian Lie lattices}\label{smet}
Given an integer $d\ges 1$, 
we are going to consider $(d+1)$-dimensional
$\bb{Z}_p$-Lie lattices that admit a $d$-dimensional abelian ideal.
Greek indices will take values in $\{0,...,d\}$, 
while Latin indices will take values in $\{1,..,d\}$. 
For matrices in $gl_{d+1}(\bb{Q}_p)$ we use a notation like 
$\ol{U}=(U_{\alpha\beta})$; moreover, 
for such a matrix, we denote $U=(U_{ij})\in gl_d(\bb{Q}_p)$.

Let $L$ be a $(d+1)$-dimensional antisymmetric $\bb{Z}_p$-algebra.
Observe that $L$ admits a $d$-dimensional abelian ideal
if and only if there exists a basis $\bm{x}=(x_0,..,x_d)$ of $L$
and a matrix $A\in gl_d(\bb{Z}_p)$, $A=(A_{ij})$, such that 
for all $i,j$ we have 
$$
\left\{
\begin{array}{l}
[x_i,x_j] = 0\\[3pt]
[x_0,x_i] = \sum_{l} A_{li}x_l.
\end{array}
\right.
$$
In this case, $\gen{x_1,...,x_d}$ is a $d$-dimensional abelian ideal.
It is immediate to see that, for such a $L$, the Jacobi identity holds, and 
that $\delta_2(L)=\{0\}$; in other words, $L$ is a metabelian Lie lattice. 
When it exists, a basis as above is called a \textbf{good basis}
of $L$, and $A$ is called the \textbf{matrix of} $L$ with respect to the (good) 
basis $\bm{x}$. Observe that $A$ is the matrix of the homomorphism of 
lattices $[x_0, \cdot\,]: \gen{x_1,...,x_d}\rar \gen{x_1,...,x_d}$
with respect to the displayed bases.

Let $L$ be a $(d+1)$-dimensional $\bb{Z}_p$-Lie lattice that admits a 
$d$-dimensional abelian ideal, let $\bm{x}$ be a good basis of $L$, and let $A$ be
the corresponding matrix. Observe that $\mr{rk}(A)=\mr{dim}[L,L]$, so that
$\mr{rk}(A)$ is an isomorphism invariant of $L$. In particular, $A$ is invertible
over $\bb{Q}_p$ if and only if $\mr{dim}[L,L]=d$, a relevant special case.
Let $M\subseteq L$ be a finite-index submodule,
let $\bm{y}=(y_0,...,y_d)$ be a basis of $M$, and let 
$\ol{U}=(U_{\alpha\beta})\in gl_{d+1}(\bb{Z}_p)$ be the 
matrix of $\bm{y}$ with respect to $\bm{x}$, namely, 
$y_\beta = \sum_{\alpha}U_{\alpha\beta}x_\alpha$.
Observe that $M\cap \gen{x_1,...,x_d}=\gen{y_1,...,y_d}$
if and only if $U_{0i}=0$ for all $i$; moreover, there exists a basis of $M$
such that $U_{\alpha\beta}=0$
for all $\alpha<\beta$. We also observe that $\mr{dim}[M,M]=\mr{dim}[L,L]$.

\begin{lemma}\label{lsmetB}
Let $d, L, \bm{x}, A, M, \bm{y}, \ol{U}=(U_{\alpha\beta})$ be as above. Assume
$M\cap \gen{x_1,...,x_d}=\gen{y_1,...,y_d}$.
Then $U=(U_{ij})$ is invertible over $\bb{Q}_p$ (it is a $d\times d$ matrix),
and we may define $B\in gl_d(\bb{Q}_p)$ by $B= U_{00}U^{-1}AU$.
Then the following holds.
\begin{enumerate}
\item $M$ is a subalgebra of $L$ if and only if
$B$ has entries in $\bb{Z}_p$.
\item Assume that $M$ is a subalgebra of $L$.
Then $\bm{y}$ is a good basis of $M$ and $B$ is the matrix of 
$M$ with respect to $\bm{y}$.
\end{enumerate}
\end{lemma}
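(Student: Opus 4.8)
The plan is to reduce everything to a direct bracket computation in the good basis $\bm{x}$, exploiting the hypothesis $U_{0i}=0$ (equivalently $M\cap\gen{x_1,...,x_d}=\gen{y_1,...,y_d}$) twice. First I would record that, because the first row of $\ol{U}$ is $(U_{00},0,...,0)$, a cofactor expansion along that row gives $\det\ol{U}=U_{00}\det U$. Since $M$ has finite index in $L$, the change-of-basis matrix $\ol{U}$ is invertible over $\bb{Q}_p$, so $\det\ol{U}\neq 0$; hence $U_{00}\neq 0$ and $\det U\neq 0$, which shows $U=(U_{ij})\in gl_d(\bb{Q}_p)$ is invertible and $B:=U_{00}U^{-1}AU$ is well defined.

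Next I would compute the brackets of the $\bm{y}$-basis. The assumption $U_{0i}=0$ says each $y_i$ with $i\ges 1$ lies in the abelian ideal $\gen{x_1,...,x_d}$, so $[y_i,y_j]=0$ for all Latin $i,j$ with no further work. For the remaining brackets write $y_0=U_{00}x_0+\sum_k U_{k0}x_k$ and $y_j=\sum_l U_{lj}x_l$; expanding $[y_0,y_j]$ bilinearly, every term involving $[x_k,x_l]$ vanishes and only the terms $U_{00}U_{lj}[x_0,x_l]=U_{00}U_{lj}\sum_m A_{ml}x_m$ survive, giving $[y_0,y_j]=U_{00}\sum_m(AU)_{mj}x_m$. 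Re-expressing the $x_m$ in the $\bm{y}$-basis via $x_m=\sum_i(U^{-1})_{im}y_i$ collapses this to $[y_0,y_j]=\sum_i(U_{00}U^{-1}AU)_{ij}y_i=\sum_i B_{ij}y_i$, a $\bb{Q}_p$-combination of $y_1,...,y_d$ alone.

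With these formulas both parts follow at once. Since $y_1,...,y_d$ are part of a $\bb{Z}_p$-basis of $M$ and the $y_0$-coefficient of $[y_0,y_j]$ is zero, the element $[y_0,y_j]$ lies in $M$ if and only if every $B_{ij}\in\bb{Z}_p$; as the $[y_i,y_j]$ already vanish, $M$ is closed under the bracket (i.e.\ is a subalgebra) if and only if $B\in gl_d(\bb{Z}_p)$, which is part (1). When this holds, the relations $[y_i,y_j]=0$ and $[y_0,y_i]=\sum_l B_{li}y_l$ are exactly the defining relations of a good basis with matrix $B$ (and in particular $\gen{y_1,...,y_d}$ is a $d$-dimensional abelian ideal of $M$), giving part (2).

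I expect no genuine obstacle here beyond careful index bookkeeping: the only points requiring attention are getting the direction of the change of basis right (hence the conjugation $U^{-1}AU$, and the factor $U_{00}$ coming from the $x_0$-coefficient of $y_0$) and observing that the hypothesis $U_{0i}=0$ does double duty, both forcing $[y_i,y_j]=0$ and preventing any $x_0$-term from appearing in $[y_0,y_j]$, which is what makes $[y_0,y_j]$ a combination of the $y_i$ with $i\ges 1$ only.
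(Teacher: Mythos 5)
Your proof is correct and follows essentially the same route as the paper's: compute $[y_i,y_j]=0$ from $U_{0i}=0$, expand $[y_0,y_j]$ in the $\bm{x}$-basis and convert back via $U^{-1}$ to obtain $[y_0,y_j]=\sum_k B_{kj}y_k$, from which both parts follow. You additionally spell out the invertibility of $U$ (via $\det\ol{U}=U_{00}\det U$), which the paper leaves implicit; this is a welcome but minor elaboration.
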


\begin{proof}
Since $y_j= \sum_i U_{ij} x_i$, it follows that $[y_i,y_j]=0$. Over $\bb{Q}_p$, we have 
$$[y_0,y_j]= 
U_{00}\sum_i U_{ij}[x_0,x_i] = 
U_{00}\sum_{i,l} U_{ij}A_{li}x_l =
U_{00}\sum_{i,l,k} U_{ij}A_{li}U^{-1}_{kl}y_k,
$$
so that $[y_0,y_j]=\sum_k B_{kj}y_k$. The lemma follows.
\end{proof}

\vspace{5mm}

\noindent
Observe that the case $M=L$ is included in the above discussion. In this
case, $U$ is invertible over $\bb{Z}_p$, and the defining formula 
of $B$ is the change-of-basis formula for the matrix of $L$
(under lower block-triangular changes of basis).

We now study homomorphisms of algebras. 

\begin{lemma} \label{lmetmor}
Let $L,M$ be $(d+1)$-dimensional 
$\bb{Z}_p$-Lie lattices endowed with good bases $\bm{x},\bm{y}$,
and let $A,B$ be the respective matrices.
Let $\varphi: M\rar L$ be a homomorphism of modules, and let 
$\ol{F}\in gl_{d+1}(\bb{Z}_p)$ be the matrix of $\varphi$ with respect
to the given bases, namely, 
$\varphi(y_\beta)= \sum_\alpha F_{\alpha\beta} x_\alpha$.
Then the following holds.
\begin{enumerate}
\item \label{lmetmor1}
The homomorphism $\varphi$ 
is a homomorphism of algebras if and only if, 
for all $i,j$:
 \begin{enumerate}
 \item \label{lmetmor1a} 
 $\sum_{l} F_{0l}B_{lj}=0$.
 \item \label{lmetmor1b}
 $F_{0i} (AF)_{kj}-F_{0j}(AF)_{ki} = 0$, for all $k$.
 \item \label{lmetmor1c}
 $(FB)_{ij} = F_{00}(AF)_{ij} - F_{0j}\sum_{l} A_{il}F_{l0}$.
 \end{enumerate}
\item \label{lmetmor2}
Assume $\varphi$ is a homomorphism of algebras and 
$\mr{dim}\,[M,M]=d$. Then $F_{0i}=0$ for all $i$.
\item \label{lmetmor3} 
Assume $F_{0i}=0$ for all $i$. Then $\varphi$ is a homomorphism of algebras
if and only if $FB=F_{00}AF$. 
\end{enumerate}
\end{lemma}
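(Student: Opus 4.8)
The plan is to check directly that $\varphi$ preserves brackets on pairs of basis vectors and to read off the three conditions by comparing coefficients in the basis $\bm{x}$. Since $\varphi$ is $\bb{Z}_p$-linear and the bracket is bilinear and antisymmetric, it suffices to verify $\varphi([y_\beta,y_\gamma])=[\varphi(y_\beta),\varphi(y_\gamma)]$ for $\beta<\gamma$ (the case $\beta=\gamma$ being automatic), so only the pairs $(y_i,y_j)$ with Latin indices and the pairs $(y_0,y_j)$ remain to be treated.

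For a pair $(y_i,y_j)$ the left-hand side vanishes, since $\gen{y_1,\dots,y_d}$ is abelian. On the right-hand side I would expand $[\varphi(y_i),\varphi(y_j)]$ by bilinearity, using $[x_i,x_j]=0$, $[x_0,x_i]=\sum_l A_{li}x_l$, and $[x_k,x_0]=-[x_0,x_k]$; collecting the coefficient of each $x_k$ gives $F_{0i}(AF)_{kj}-F_{0j}(AF)_{ki}$, so that the bracket condition for these pairs is exactly (\ref{lmetmor1b}). For a pair $(y_0,y_j)$ the left-hand side equals $\sum_l B_{lj}\varphi(y_l)$, whose $x_0$-coefficient is $\sum_l F_{0l}B_{lj}$ and whose $x_k$-coefficient is $(FB)_{kj}$. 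Expanding $[\varphi(y_0),\varphi(y_j)]$ in the same way, every surviving bracket lies in $\gen{x_1,\dots,x_d}$, so its $x_0$-coefficient is $0$ and its $x_k$-coefficient is $F_{00}(AF)_{kj}-F_{0j}\sum_l A_{kl}F_{l0}$. Matching $x_0$-coefficients yields (\ref{lmetmor1a}) and matching $x_k$-coefficients yields (\ref{lmetmor1c}), which proves (\ref{lmetmor1}).

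Parts (\ref{lmetmor2}) and (\ref{lmetmor3}) then follow formally. Recall that the rank of the defining matrix equals the dimension of the derived subalgebra, so $\mr{rk}(B)=\mr{dim}[M,M]$; hence the hypothesis $\mr{dim}[M,M]=d$ makes $B$ invertible over $\bb{Q}_p$. Reading (\ref{lmetmor1a}) as the row-vector identity $(F_{01},\dots,F_{0d})\,B=0$, invertibility forces $F_{0i}=0$ for all $i$, which is (\ref{lmetmor2}). For (\ref{lmetmor3}) I would substitute $F_{0i}=0$ back into (\ref{lmetmor1a})--(\ref{lmetmor1c}): conditions (\ref{lmetmor1a}) and (\ref{lmetmor1b}) then hold automatically, while (\ref{lmetmor1c}) collapses to $(FB)_{ij}=F_{00}(AF)_{ij}$, i.e. $FB=F_{00}AF$.

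The computations are routine, and the only points requiring care are bookkeeping ones: keeping the sign in $[x_k,x_0]=-[x_0,x_k]$ correct, distinguishing the Latin index range $\{1,\dots,d\}$ from the Greek range $\{0,\dots,d\}$, and --- most importantly --- noting that the $x_0$-component on the right-hand side of the $(y_0,y_j)$ identity is genuinely absent. This last observation is what produces the extra constraint (\ref{lmetmor1a}), and it is precisely this constraint that drives the rank argument for (\ref{lmetmor2}).
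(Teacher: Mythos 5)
Your proposal is correct and follows essentially the same route as the paper: expand the bracket conditions on pairs of basis vectors, match $x_0$- and $x_k$-coefficients to obtain (a)--(c), then use $\mr{rk}(B)=\mr{dim}[M,M]$ to force $F_{0i}=0$ in part (2), and specialize in part (3). (Your sign $-F_{0j}\sum_l A_{kl}F_{l0}$ is the correct one, matching the stated condition (c).)
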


\begin{proof}
The homomorphism $\varphi$ 
is a homomorphism of algebras if and only if, for all $i,j$, 
$[\varphi(y_i),\varphi(y_j)]=0$ and 
$\varphi([y_0,y_j]) = [\varphi(y_0),\varphi(y_j)]$. 
One computes, 
\begin{eqnarray*}
[\varphi(y_i),\varphi(y_j)] &=&
\sum_{l} (F_{0i}F_{lj}-F_{0j}F_{li})[x_0,x_l]=
\sum_{k} (F_{0i}(AF)_{kj}-F_{0j}(AF)_{ki})x_k,\\
\varphi([y_0,y_j]) & = &  \sum_{l,\alpha} B_{lj}F_{\alpha l}x_\alpha =
\left(\sum_{l}F_{0l}B_{lj}\right)x_0 + \sum_i (FB)_{ij}x_i,\\[5pt]
[\varphi(y_0),\varphi(y_j)]&=& 
\sum_l (F_{00}F_{lj}-F_{l0}F_{0j}) [x_0,x_l] =
\sum_i \left(F_{00}(AF)_{ij}+ F_{0j}\sum_l A_{il}F_{l0}\right)x_i,
\end{eqnarray*}
from which item (\ref{lmetmor1}) follows.
For item (\ref{lmetmor2}), one observes
that $B$ is invertible over $\bb{Q}_p$ and applies
item (\ref{lmetmor1a}). 
Item (\ref{lmetmor3}) follows directly from
item (\ref{lmetmor1}).
\end{proof}

\begin{corollary}\label{cFeqs}
Let $L$ be a $(d+1)$-dimensional $\bb{Z}_p$-Lie lattice with $\mr{dim}[L,L]=d$,
and let $\varphi:M\rar L$ be a virtual endomorphism of $L$. 
Let $\bm{x}$ be a good basis of $L$. Then the following holds.
\begin{enumerate}
\item \label{cFeqs1}
Let $\bm{y}$ be a basis 
of $M$ with the property $M\cap \gen{x_1,...,x_d}=\gen{y_1,...,y_d}$.
Then $FB=F_{00}AF$, where $A$, $B$ and $\ol{F}$ are as in Lemma \ref{lmetmor}.
\item \label{cFeqs2} 
Assume $\gen{x_1,..,x_d}\subseteq M$. Then $\gen{x_1,...,x_d}$ is
a $\varphi$-invariant ideal of $L$.
\end{enumerate}
\end{corollary}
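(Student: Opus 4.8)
The plan is to derive both statements by feeding the virtual endomorphism $\varphi$ directly into Lemmas~\ref{lsmetB} and~\ref{lmetmor}. The only extra input needed beyond those two lemmas is the observation, recorded in the discussion preceding Lemma~\ref{lsmetB}, that $\mr{dim}[M,M]=\mr{dim}[L,L]=d$; since $\mr{dim}[L,L]=d$ by hypothesis, this guarantees that the matrices $A$ and $B$ are invertible over $\bb{Q}_p$, which is exactly what makes item~(\ref{lmetmor2}) of Lemma~\ref{lmetmor} applicable.

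For item~(\ref{cFeqs1}), I would first note that $M$ is a subalgebra of $L$, being the domain of a virtual endomorphism. Since by hypothesis $M\cap\gen{x_1,\dots,x_d}=\gen{y_1,\dots,y_d}$, item~(2) of Lemma~\ref{lsmetB} applies and shows that $\bm{y}$ is a good basis of $M$ whose matrix is precisely the $B$ of the statement. Thus $L$ and $M$ carry good bases $\bm{x},\bm{y}$ with matrices $A,B$, and $\varphi\colon M\rar L$ is a homomorphism of algebras, so Lemma~\ref{lmetmor} is available with $\ol{F}$ the matrix of $\varphi$. Because $\mr{dim}[M,M]=d$, item~(\ref{lmetmor2}) of that lemma forces $F_{0i}=0$ for all $i$; substituting this into item~(\ref{lmetmor3}) and using once more that $\varphi$ is a homomorphism of algebras yields $FB=F_{00}AF$, as required.

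For item~(\ref{cFeqs2}), I would reduce to a good basis of $M$ adapted to the inclusion. The hypothesis $\gen{x_1,\dots,x_d}\subseteq M$ gives $M\cap\gen{x_1,\dots,x_d}=\gen{x_1,\dots,x_d}$, and the quotient $M/\gen{x_1,\dots,x_d}$ embeds into $L/\gen{x_1,\dots,x_d}\cong\bb{Z}_p$, hence is free of rank~$1$; therefore the inclusion $\gen{x_1,\dots,x_d}\hookrightarrow M$ splits and $x_1,\dots,x_d$ extend to a basis $\bm{y}=(y_0,x_1,\dots,x_d)$ of $M$ with $y_i=x_i$ for $i\ges 1$. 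This $\bm{y}$ satisfies the intersection condition of item~(\ref{cFeqs1}), so by item~(2) of Lemma~\ref{lsmetB} it is a good basis of $M$, and exactly as in the previous paragraph item~(\ref{lmetmor2}) of Lemma~\ref{lmetmor} gives $F_{0i}=0$ for all $i$. Consequently $\varphi(x_i)=\varphi(y_i)=\sum_l F_{li}x_l\in\gen{x_1,\dots,x_d}$, so $\gen{x_1,\dots,x_d}$ is $\varphi$-invariant; that it is an ideal of $L$ is immediate from the good-basis relations $[x_i,x_j]=0$ and $[x_0,x_i]=\sum_l A_{li}x_l\in\gen{x_1,\dots,x_d}$.

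The argument is essentially bookkeeping, and I do not expect a serious obstacle once the two lemmas are in place. The one point requiring a moment's care is the existence of the adapted basis in item~(\ref{cFeqs2}): one must verify that $\gen{x_1,\dots,x_d}$ is a direct summand of $M$, which is exactly where the freeness of the rank-$1$ quotient $M/\gen{x_1,\dots,x_d}$ is used. Everything else is a direct citation of Lemmas~\ref{lsmetB} and~\ref{lmetmor}, together with the identity $\mr{dim}[M,M]=\mr{dim}[L,L]$.
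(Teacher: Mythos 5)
Your proof is correct and follows exactly the route the paper intends: the corollary is stated without proof there, as an immediate consequence of Lemmas \ref{lsmetB} and \ref{lmetmor} via the observation $\mr{dim}[M,M]=\mr{dim}[L,L]=d$, which is precisely what you spell out. The only detail you add beyond the paper's implicit argument is the explicit splitting of $M/\gen{x_1,\dots,x_d}$ to produce the adapted basis in item (\ref{cFeqs2}), and that step is sound.
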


\begin{remark}\label{rsol2di}
Any 3-dimensional solvable $\bb{Z}_p$-Lie lattice admits a 
2-dimensional abelian ideal.
\end{remark}


\subsection{Self-similarity results}\label{sssres}

When $\varphi:M\rar L$ is a virtual endomorphism of a Lie lattice $L$,
we denote by $D_n$, $n\in \bb{N}$, the domain of the power $\varphi^n$, and we define 
$D_\infty = \bigcap_{n\in\bb{N}} D_n$. We recall that, by definition,
$D_0 = M$ and $D_{n+1}=\{x\in M:\, \varphi(x)\in D_n\}$
(see, for instance, \cite[Definition 1.1]{NS2019}).

\begin{proposition}\label{pabelianid}
Let $k,d\ges 1$ be integers, and let $L$ be a
$\bb{Z}_p$-Lie lattice
of dimension $d+1$. Assume that $L$ admits a $d$-dimensional abelian ideal.
Then $L$ is self-similar of index $p^{dk}$.
\end{proposition}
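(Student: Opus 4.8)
The plan is to exhibit an explicit virtual endomorphism of index $p^{dk}$ and prove it is simple. Fix a good basis $\bm{x}=(x_0,\dots,x_d)$ of $L$ with matrix $A\in gl_d(\bb{Z}_p)$, so that $I:=\gen{x_1,\dots,x_d}$ is the $d$-dimensional abelian ideal and $[x_0,x_i]=\sum_l A_{li}x_l$. I would take $M:=\gen{x_0,\,p^kx_1,\dots,p^kx_d}$, a submodule of index $p^{dk}$. Setting $\bm{y}=(x_0,p^kx_1,\dots,p^kx_d)$, the change-of-basis matrix $\ol{U}$ has $U_{00}=1$ and $U=p^k\mr{Id}$, so by Lemma~\ref{lsmetB} the submodule $M$ is a subalgebra and $\bm{y}$ is a good basis of $M$ with matrix $B=U_{00}U^{-1}AU=A$. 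Consequently the module isomorphism $\varphi:M\rar L$ determined by $\varphi(y_\alpha)=x_\alpha$ sends a good basis of $M$ to a good basis of $L$ with the same matrix $A$, hence is an isomorphism of Lie lattices; in particular it is a homomorphism of algebras, and $\varphi:M\rar L$ is a virtual endomorphism of $L$ of index $p^{dk}$. It remains to prove simplicity.

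The crux is to show that no non-zero ideal $J$ of $L$ is $\varphi$-invariant. On $I$ the map $\varphi$ divides each $x_i$-coordinate by $p^k$, and I would exploit this contraction to force $J\cap I=\{0\}$. If $J$ is $\varphi$-invariant, then $\varphi(J)\subseteq J\subseteq M$ gives $J\subseteq D_n$ for every $n$, so for any $z=\sum_i b_ix_i\in J\cap I$ one has $\varphi^n(z)=\sum_i (b_i/p^{nk})\,x_i\in J\subseteq L$ for all $n$; since $L=\gen{x_0,\dots,x_d}$ this requires $b_i\in\bigcap_n p^{nk}\bb{Z}_p=\{0\}$, whence $z=0$. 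Thus $J\cap I=\{0\}$, and as $I$ has codimension $1$ in $L$ this forces $\dim J\les 1$.

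To conclude, I observe that every bracket lands in $I$, so $[L,L]\subseteq I$ and therefore $[L,J]\subseteq J\cap I=\{0\}$ for the ideal $J$. If $J\ne\{0\}$, a generator may be written $z=ax_0+\sum_i b_ix_i$ with $a\ne 0$ (otherwise $z\in I\cap J=\{0\}$), and then $[x_i,z]=-a\sum_l A_{li}x_l\in J\cap I=\{0\}$ for every $i$ forces $A=0$. Hence, provided $A\ne 0$, no non-zero $\varphi$-invariant ideal exists, $\varphi$ is simple, and $L$ is self-similar of index $p^{dk}$. The only remaining possibility is $A=0$, i.e.\ $L\cong\bb{Z}_p^{d+1}$ abelian; here the chosen $\varphi$ fails (the ideal $\gen{x_0}$ is invariant), but $\bb{Z}_p^{d+1}$ is self-similar of index $p^{k'}$ for every $k'\ges 1$, and in particular of index $p^{dk}$, which settles this case. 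I expect the main obstacle to be the simplicity proof: one must correctly combine the two distinct mechanisms --- the $p$-adic contraction eliminating $J\cap I$, and the ideal condition together with $[L,L]\subseteq I$ eliminating the residual one-dimensional ideal --- and isolate the degenerate abelian case, in which the explicit endomorphism is not simple.
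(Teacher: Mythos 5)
Your proposal is correct and follows essentially the same route as the paper: the same subalgebra $M=\gen{x_0,p^kx_1,\dots,p^kx_d}$ and the same map $\varphi$, with the abelian case set aside by appeal to the known self-similarity of $\bb{Z}_p^{d+1}$ at every index. Your simplicity argument (an invariant ideal $J$ must satisfy $J\cap\gen{x_1,\dots,x_d}=\{0\}$ by the $p$-adic contraction, and then $[x_i,z]\in J\cap\gen{x_1,\dots,x_d}$ forces $A=0$) is just the contrapositive packaging of the paper's computation $D_\infty=\gen{x_0}$ together with its choice of witness $w=[z,x_i]$.
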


\begin{proof}
If $L$ is abelian, it is easy to see that $L$ is self similar of index $p^m$
for all $m\ges 1$. Assume that $L$ is not abelian.
There exists a basis $(x_0,x_1,...,x_d)$ of $L$ such that 
$[x_i, x_j]=0$ and $[x_0,x_i]=\sum_{l=1}^d A_{li}x_l$ for all $1\les i,j\les d$,
and some $A_{li}\in\bb{Z}_p$. We define 
$M = \langle x_0, p^{k}x_1,...,p^{k}x_d\rangle$,
and observe that $M$ is a subalgebra of $L$ of index $p^{dk}$.
We define a homomorphism of algebras $\varphi: M \rar L$ by
$\varphi(x_0) = x_0$ and $\varphi(p^{k}x_i)= x_i$ for $1\les i\les d$.
We are going to show that $\varphi$ is simple.
One sees that $D_\infty =\langle x_0 \rangle$. Let $I$ be a non-trivial ideal of $L$.
We show that $L$ is not $\varphi$-invariant by proving the existence of $w\in I$
such that $w\not\in D_\infty$. Indeed, there exists $0\neq z =a_0x_0+...+a_dx_d\in I$.
If $a_i\neq 0$ for some $i>0$ then one may take $w=z$. Otherwise $z = a_0x_0$
with $a_0\neq 0$. Since $L$ is not abelian, there exists $i>0$
such that $[x_0,x_i]\neq 0$. In this case one may take $w = [z,x_i]$.
\end{proof}

\begin{corollary}\label{csigmap2}
Let $k\ges 1$ be an integer, and let $L$ be a $3$-dimensional
solvable $\bb{Z}_p$-Lie lattice. Then $L$ is self-similar of index
$p^{2k}$.
\end{corollary}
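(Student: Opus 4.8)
The plan is to deduce this directly from Proposition \ref{pabelianid} by checking that its hypotheses are met with $d=2$. A $3$-dimensional solvable $\bb{Z}_p$-Lie lattice $L$ has dimension $d+1$ for $d=2$, so the only thing to verify is that $L$ admits a $2$-dimensional abelian ideal. This is exactly the content of Remark \ref{rsol2di}, which I would invoke here; granting it, Proposition \ref{pabelianid} applied with $d=2$ yields that $L$ is self-similar of index $p^{2k}$ for every $k\ges 1$, which is the claim.

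Thus the corollary is essentially immediate once Remark \ref{rsol2di} is available, and the substantive step is the existence of the abelian ideal. To justify that step, I would argue as follows: since $L$ is solvable and $3$-dimensional, its derived subalgebra $[L,L]$ is a proper ideal, hence of $\bb{Z}_p$-rank at most $2$. If $\mr{rk}[L,L]\les 1$, then the isolator $\mr{iso}_L([L,L])$ together with one more independent element spans a codimension-$1$ submodule that one checks is an abelian ideal; if $\mr{rk}[L,L]=2$, then $[L,L]$ itself (or its isolator) is the desired $2$-dimensional ideal, and it is abelian because a $2$-dimensional nilpotent Lie lattice with the appropriate nilpotency is abelian. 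In all cases one arranges a codimension-$1$ isolated submodule that is an abelian ideal, giving the good basis required by Proposition \ref{pabelianid}.

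I do not expect any serious obstacle here: the only delicate point is the case analysis needed for Remark \ref{rsol2di}, and that is already isolated as a separate statement in the excerpt, so in the proof of the corollary itself I would simply cite Remark \ref{rsol2di} and Proposition \ref{pabelianid} in one line.
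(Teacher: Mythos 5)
Your proposal is correct and is exactly the paper's argument: the corollary is deduced in one line from Remark \ref{rsol2di} (existence of a $2$-dimensional abelian ideal) together with Proposition \ref{pabelianid} applied with $d=2$. Your additional sketch justifying Remark \ref{rsol2di} via the rank of $[L,L]$ is sound but not needed for the corollary itself, since the paper states that remark separately.
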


\begin{proof}
Since $L$ admits a 2-dimensional abelian ideal, 
the corollary follows from Proposition \ref{pabelianid}.
\end{proof}

\vspace{3mm}

In order to have a more elegant proof of simplicity in Lemma \ref{lssp} below,
we observe that the following generalization of 
\cite[Proposition 2.9.2]{NekSSgrp}
holds. Let $R$ be a PID, and let $K$ be the field of fractions of $R$.
We identify $R\subseteq K$. Let $d\in\bb{N}$, $\Phi:K^d\rar K^d$
be a $K$-linear function, and $p_\Phi(\lambda)\in K[\lambda]$
be the characteristic polynomial of $\Phi$. 
Let $M$ be the set of $x\in R^d$ such
that $\Phi(x)\in R^d$. Then $M$ is a sub-$R$-module of $R^d$
and the restriction $\varphi:M\rar R^d$ of $\Phi$ may be interpreted
as a virtual endomorphism of the $R$-module $R^d$ (in the application below,
$R^d$ is thought of as an abelian $R$-Lie lattice). 

\begin{proposition}\label{pabelsimp}
In the context described above, $D_\infty=\{0\}$ if and only if 
there are no monic irreducible factors of $p_\Phi(\lambda)$
with coefficients in $R$. 
\end{proposition}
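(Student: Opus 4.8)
The plan is to first reinterpret $D_\infty$ intrinsically. Unwinding the recursion $D_0=M$, $D_{n+1}=\{x\in M:\varphi(x)\in D_n\}$, one checks that $D_n=\{x\in R^d:\Phi^i(x)\in R^d\text{ for }1\les i\les n+1\}$, so that $D_\infty=\{x\in R^d:\Phi^i(x)\in R^d\text{ for all }i\ges 0\}$. From this description it is immediate that $D_\infty$ is an $R$-submodule of $R^d$ satisfying $\Phi(D_\infty)\subseteq D_\infty$, and --- crucially --- that it is the \emph{largest} $\Phi$-invariant $R$-submodule of $R^d$: any $\Phi$-invariant submodule $N\subseteq R^d$ has $\Phi^i(x)\in N\subseteq R^d$ for every $x\in N$, hence $N\subseteq D_\infty$. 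Thus the statement to prove becomes: $R^d$ contains a non-zero $\Phi$-invariant $R$-submodule if and only if $p_\Phi$ has a monic irreducible factor in $R[\lambda]$.

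For the direction ($\Rightarrow$) I would start from a non-zero $\Phi$-invariant submodule $N\subseteq R^d$. Since $R$ is a PID, $N$ is free of some rank $k\ges 1$; fixing an $R$-basis, the invariance $\Phi(N)\subseteq N$ shows that the matrix of $\Phi|_N$ has entries in $R$, and this same matrix represents $\Phi$ on the $\Phi$-invariant subspace $KN\subseteq K^d$ (an $R$-basis of $N$ stays $K$-linearly independent). Hence the characteristic polynomial $g$ of $\Phi|_N$ is monic of degree $k$ with coefficients in $R$, and $g\mid p_\Phi$ since the characteristic polynomial of a restriction to an invariant subspace divides the whole one. Factoring $g$ over $K$ and choosing a monic irreducible factor $h$, I would then invoke Gauss's lemma --- valid because the PID $R$ is integrally closed in $K$ --- to conclude from $g=h\cdot(g/h)$, with both factors monic in $K[\lambda]$, that $h\in R[\lambda]$; then $h\mid p_\Phi$ is the required factor.

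For ($\Leftarrow$) I would take a monic irreducible $q\in R[\lambda]$ with $q\mid p_\Phi$. As $q$ is irreducible it divides the minimal polynomial of $\Phi$, so $V:=\ker q(\Phi)\neq 0$; clearing denominators gives a non-zero $v\in V\cap R^d$. Setting $\Lambda:=\sum_{i=0}^{m-1}R\,\Phi^i(v)$ with $m=\deg q$, the relation $q(\Phi)v=0$ together with $q$ being monic over $R$ expresses $\Phi^m v$ as an $R$-combination of $v,\Phi(v),\dots,\Phi^{m-1}(v)$, so $\Lambda$ is a non-zero finitely generated $\Phi$-invariant $R$-submodule of $K^d$. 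It need not lie in $R^d$, but clearing denominators once more produces $0\neq r\in R$ with $r\Lambda\subseteq R^d$; since $\Phi$ is $K$-linear, $r\Lambda$ remains $\Phi$-invariant, whence $0\neq r\Lambda\subseteq D_\infty$.

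The clearing-denominators steps and the block-triangular divisibility of characteristic polynomials are routine. The conceptual heart of the argument --- and the step I expect to need the most care --- is the interplay between the integrality condition ``$\Phi^i(x)$ remains in $R^d$'' and the coefficient condition ``the factor lies in $R[\lambda]$'': the forward direction rests on the descent of monic factors from $K[\lambda]$ to $R[\lambda]$ via integral closedness of $R$, while the backward direction rests on the observation that a $\Phi$-invariant lattice can be rescaled into $R^d$ without destroying its invariance, precisely because $\Phi$ is $K$-linear.
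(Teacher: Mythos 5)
Your proof is correct, and it is essentially the argument the paper has in mind: the paper's ``proof'' consists of the single remark that the proof of \cite[Proposition 2.9.2]{NekSSgrp} carries over from $\bb{Z}$ to a general PID, and what you have written out --- identifying $D_\infty$ as the largest $\Phi$-invariant submodule of $R^d$, extracting a monic factor in $R[\lambda]$ from an invariant submodule via integral closedness, and conversely rescaling the cyclic module $\sum_i R\,\Phi^i(v)$ into $R^d$ --- is precisely that carried-over argument in full detail.
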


\begin{proof}
The proof of \cite[Proposition 2.9.2]{NekSSgrp} works in this
more general context.
\end{proof}

\begin{lemma}\label{lssp}
Let $k\ges 1$ be an integer. Then the following Lie lattices are 
self-similar of index $p^k$. 
\begin{enumerate}
\item $L_{\ref*{C6}}(a)$. 
\item $L_{\ref*{C2}}(s,r,c)$ with $v_p(c)=1$.
\item $L_{\ref*{C7}}(s,a,c)$ with $v_p(c)=1$ and $v_p(a)\ges 1$,
or with $v_p(4c+a^2)=1$, $v_p(a)=0$ and $v_p(c)=0$.
\item For $p\ges 3$, $L_{\ref*{C7}}(s, 0, 1)$.
\end{enumerate}
\end{lemma}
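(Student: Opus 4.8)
The plan is to handle the four lattices uniformly. Each is metabelian with good basis $\bm{x}=(x_0,x_1,x_2)$ and abelian ideal $V=\gen{x_1,x_2}$; let $A\in gl_2(\bb{Z}_p)$ be the matrix of $[x_0,\cdot\,]$ on $V$, so $A=aI$ for $L_{\ref*{C6}}(a)$, $A=\left(\begin{smallmatrix}p^s & p^{s+r}\\ p^{s+r}c & p^s\end{smallmatrix}\right)$ for $L_{\ref*{C2}}$, and $A=p^sA'$ with $A'=\left(\begin{smallmatrix}a&1\\ c&0\end{smallmatrix}\right)$ for $L_{\ref*{C7}}$. For each case I will build a virtual endomorphism of a special shape: choose a sign $\xi\in\{1,-1\}$ and a $\bb{Q}_p$-linear map $\Phi_V$ on $V\otimes\bb{Q}_p$, and set $\varphi(x_0)=\xi x_0$ and $\varphi|_V=\Phi_V$ on the domain $M=\gen{x_0}\oplus\{v\in V:\Phi_Vv\in V\}$. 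By Lemma \ref{lmetmor}(\ref{lmetmor3}) (read with $F_{0i}=0$ and $F_{00}=\xi$), such a $\varphi$ is a homomorphism of algebras exactly when $\Phi_VA=\xi A\Phi_V$; granting this, $M$ is readily seen to be a subalgebra, and $[L:M]=[V:\{v\in V:\Phi_Vv\in V\}]$, which I will force to equal $p^k$.

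For simplicity I will use that, since $\varphi(x_0)=\xi x_0\in L$, an element $a_0x_0+v$ with $v\in V$ lies in $D_\infty$ if and only if $v$ lies in $D_\infty^V$, the eventual domain of the abelian virtual endomorphism $\Phi_V$ of $V$; hence $D_\infty=\gen{x_0}\oplus D_\infty^V$. I will always arrange $D_\infty^V=\{0\}$, which by Proposition \ref{pabelsimp} holds as soon as the characteristic polynomial of $\Phi_V$ has no monic irreducible factor with coefficients in $\bb{Z}_p$. Given this, no nonzero ideal $I$ can sit in $D_\infty$: if $I\subseteq V$ then $I\subseteq D_\infty^V=\{0\}$; and if $I$ contains some $a_0x_0+v$ with $a_0\neq 0$, the brackets $[x_i,a_0x_0+v]=-a_0[x_0,x_i]$ give nonzero elements of $a_0\,\mr{Im}(A)\subseteq I\cap V\subseteq D_\infty^V=\{0\}$ (using $A\neq 0$), a contradiction. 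Thus $\varphi$ is simple, and everything reduces to exhibiting, in each case, a map $\Phi_V$ with $\Phi_VA=\xi A\Phi_V$, of index $p^k$, and with all eigenvalues of negative (hence non-integral) valuation.

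For items (1)--(3) I take $\xi=1$, so $\Phi_V$ must commute with $A$. In (1) with $a\neq 0$ the commutant is all of $gl_2(\bb{Q}_p)$, and $\Phi_V=\left(\begin{smallmatrix}0&1\\ p^{-k}\rho&0\end{smallmatrix}\right)$ has index $p^k$ and characteristic polynomial $\lambda^2-p^{-k}\rho$, which has no monic $\bb{Z}_p$-factor; the case $a=0$ is the abelian lattice and is immediate from Proposition \ref{pabelsimp}. In (2) and (3) the hypotheses $v_p(c)=1$, respectively $v_p(a^2+4c)=1$, force the characteristic polynomial of $A$ to have discriminant of odd valuation, so $\bb{Q}_p[A]$ is a ramified quadratic field; a uniformizing element is given by the integral matrix $J=\left(\begin{smallmatrix}0&1\\ c&0\end{smallmatrix}\right)$ in (2) and $J=2A'-aI$ in (3), with $v_p(\det J)=1$ in both. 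Taking $\Phi_V=J^{-k}$ (which commutes with $A$) yields $\{v\in V:\Phi_Vv\in V\}=J^kV$ of index $p^k$ and eigenvalues of valuation $-k/2$, so $D_\infty^V=\{0\}$.

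The genuinely delicate case, and the reason item (4) is isolated (with $p\ges 3$), is $L_{\ref*{C7}}(s,0,1)$: here $A=p^s\left(\begin{smallmatrix}0&1\\ 1&0\end{smallmatrix}\right)$ has eigenvalues $\pm p^s$ and $\bb{Q}_p[A]\cong\bb{Q}_p\times\bb{Q}_p$ is split. With $\xi=1$ the map $\Phi_V$ is forced to be diagonal in the eigenbasis $u=x_1+x_2$, $w=x_1-x_2$ (a $\bb{Z}_p$-basis of $V$ since $p$ is odd); to reach index $p^k$ one eigenvalue must then have valuation $\ges 0$, and its linear factor is integral, so simplicity fails --- fatally for $k=1$. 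The way out is the symmetry $A\mapsto-A$: choosing $\xi=-1$, the relation $\Phi_VA=-A\Phi_V$ forces $\Phi_V$ to be anti-diagonal in the $(u,w)$-basis, and $\Phi_V=\left(\begin{smallmatrix}0&p^{-k}\\ 1&0\end{smallmatrix}\right)$ has index $p^k$ for every $k\ges 1$ with characteristic polynomial $\lambda^2-p^{-k}$, which has no monic $\bb{Z}_p$-factor. Recognising this twist, and checking that the simplicity argument of the second paragraph is insensitive to the sign $\xi$, is the main obstacle; with it in hand all four families follow.
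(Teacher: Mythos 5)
Your construction is, in substance, the paper's own proof in a more structural dress: the paper likewise takes $\varphi(x_0)=\pm x_0$ (with the sign $-1$ used exactly for $L_{\ref*{C7}}(s,0,1)$), splits $D_\infty=\gen{x_0}\oplus E_\infty$ where $E_\infty$ is the eventual domain of the restriction to the abelian ideal, and kills $E_\infty$ via the characteristic-polynomial criterion of Proposition \ref{pabelsimp}; the deduction of simplicity from $E_\infty=\{0\}$ and $A\neq 0$ is also the same. The genuine differences are organisational and both in your favour: you treat all $k$ at once by taking the $k$-th power of a uniformizer $J$ of the quadratic order $\bb{Z}_p[A]$ (respectively an anticommuting $J$ in the split case), whereas the paper disposes of even $k$ by Corollary \ref{csigmap2} and only writes down explicit maps of odd index $p^{2l+1}$; and your identification of the relevant invariant --- odd valuation of the discriminant, i.e.\ ramification, versus the split case where a sign twist is forced --- explains uniformly why these parameter ranges and no others appear. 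The verification that the simplicity argument is insensitive to the sign $\xi$ is correct.

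There is, however, one concrete gap. Items (1)--(3) of the lemma carry no hypothesis $p\ges 3$, and your argument for item (3) breaks at $p=2$ in the subcase $v_p(c)=1$, $v_p(a)\ges 1$: there $v_2(4c)=3$ and $v_2(a^2)\ges 2$, so $v_2(a^2+4c)\in\{2,3\}$, your $J=2A'-aI$ has $v_2(\det J)=v_2(a^2+4c)\neq 1$, and $J^kV$ has index $2^{k\,v_2(\det J)}\neq 2^k$ in $V$; the blanket claim ``$v_p(\det J)=1$ in both'' uses $p\neq 2$ through the invertibility of $2$. The repair is immediate: in that subcase the characteristic polynomial $\lambda^2-a\lambda-c$ of $A'$ is Eisenstein, so $A'$ itself is an integral uniformizer with $v_p(\det A')=v_p(c)=1$ for every $p$, and $\Phi_V=(A')^{-k}$ does the job. (The second subcase of item (3) is vacuous for $p=2$, and item (2) is unaffected since there $v_2(\det J)=v_2(c)=1$.) A cosmetic point of the same kind: in item (1) the element $\rho$ is only fixed in the paper for $p\ges 3$; for $p=2$ you should take any non-square unit, e.g.\ $3$. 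With these two adjustments the proof is complete.
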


\begin{proof} Let $(x_0,x_1,x_2)$ be the basis of the relevant Lie lattice
as given by its presentation in Definition \ref{dlielat}.
We begin with $L = L_{\ref*{C6}}(a)$, where we exhibit a simple virtual endomorphism
$\varphi:M\rar L$ of index $p^k$. Define 
$M= \langle x_0, x_1, p^kx_2\rangle $. 
For $a=0$, the abelian case, define 
$\varphi(x_0) = x_1$, 
$\varphi(x_1) = x_2$ and 
$\varphi(p^kx_2) = x_0$. 
For $a\neq 0$,
define 
$\varphi(x_0) = x_0$, 
$\varphi(x_1) = x_2$ and 
$\varphi(p^kx_2) = x_1$. 
Recall that $D_\infty$ is the intersection of the domains of the powers of $\varphi$.
In the abelian case one shows that $D_\infty =\{0\}$,
while in the non-abelian case one shows that $D_\infty=\gen{x_0}$.
Since a $\varphi$-invariant subset of $L$ has to be a subset 
of $D_\infty$, in both cases one  shows that a non-zero ideal 
of $L$ is not $\varphi$-invariant (cf. proof of Proposition \ref{pabelianid}).

We now denote by $L$ any of the Lie lattices that remain to be analyzed.
From Corollary \ref{csigmap2}, it is enough to treat the case where $k=2l+1$
is odd. We exhibit a simple virtual endomorphism
$\varphi:M\rar L$ of index $p^{2l+1}$.
For $L_{\ref*{C2}}(s,r,c)$, define 
$M= \langle x_0, p^lx_1, p^{l+1}x_2\rangle $
and 
$\varphi(x_0) = x_0$, 
$\varphi(p^l x_1) = x_1+p^{-1}cx_2$ and 
$\varphi(p^{l+1}x_2) = x_1+px_2$.
For $L_{\ref*{C7}}(s,a,c)$
with $v_p(c)=1$ and $v_p(a)\ges 1$, define
$M= \langle x_0, p^lx_1, p^{l+1}x_2\rangle $
and
$\varphi(x_0) = x_0$, 
$\varphi(p^l x_1) = p^{-1}cx_2$ and 
$\varphi(p^{l+1}x_2) = x_1-a x_2$.
For $L_{\ref*{C7}}(s,a,c)$ with $v_p(4c+a^2)=1$, $v_p(a)=0$ and $v_p(c)=0$
(necessarily $p\ges 3$), define
$M= \langle x_0, p^l(x_1-2^{-1}a x_2), p^{l+1}x_2\rangle$ and 
$\varphi(x_0) = x_0$, 
$\varphi(p^l (x_1-2^{-1}ax_2)) = p^{-1}(c+4^{-1}a^2)x_2$ and 
$\varphi(p^{l+1}x_2) = x_1-2^{-1}a x_2$.
Finally, for $L_{\ref*{C7}}(s,0,1)$, define 
$M= \langle x_0, p^l(x_1-x_2), p^{l+1}x_2\rangle$
and
$\varphi(x_0) = -x_0$, 
$\varphi(p^l (x_1-x_2) )= x_1+x_2$ and 
$\varphi(p^{l+1}x_2) = x_1-(1+p) x_2$.
The proof of simplicity of $\varphi$ may go as follows. 
Let $\psi: M\cap \gen{x_1,x_2}\rar \gen{x_1,x_2}$ be the restriction
of $\varphi$. Let $D_\infty$ be as above, and $E_\infty$ be the intersection
of the domains of the powers of $\psi$. We have 
$D_\infty = \gen{x_0}\oplus E_\infty$
(indeed, $\varphi$ is the direct sum of $\psi$ and a homomorphism
that sends $\gen{x_0}$ to $\gen{x_0}$). We claim that $E_\infty=\{0\}$,
from which the simplicty of $\varphi$ follows.
Observe that in each of the cases at hand $\psi$ is an isomorphism.
Because of that, one can see that the the virtual endomorphism associated
with $\Phi:=\psi\otimes \bb{Q}_p$ (as described above Proposition \ref{pabelsimp}) 
may be identified with $\psi$. Hence, by the proposition itself,
it sufficies to show that the characteristic polynomial 
$p(\lambda)\in \bb{Q}_p[\lambda]$ of 
$\Phi:\bb{Q}_px_1\oplus \bb{Q}_px_2\rar \bb{Q}_px_1\oplus\bb{Q}_px_2$
has no monic irreducible factors with coefficients in $\bb{Z}_p$.
We treat the case of $L_{\ref*{C2}}(s,r,c)$; the other cases are similar and
are left to the reader. We have
$\Phi(x_1) = p^{-l}x_1+p^{-l-1}cx_2$ 
and $\Phi(x_2)=p^{-l-1}x_1+p^{-l}x_2$.
Then 
$p(\lambda)=\lambda^2-2p^{-l}\lambda + p^{-2l}-cp^{-2l-2}$.
Observe that $v_p(p^{-2l}-cp^{-2l-2})=-2l-1<0$ so that in case $p(\lambda)$
is irreducible there is nothing left to prove.
Assume that $p(\lambda)$ is reducible.
The proof of the lemma is concluded
once we prove that this assumption leads to a contradiction. Indeed, 
$p(\lambda)=(\lambda - \mu)(\lambda -\nu)$ with
$\mu,\nu\in\bb{Q}_p$. We have $\mu+\nu = 2p^{-l}$ and 
$\mu\nu = p^{-2l}-cp^{-2l-2}$. 
Since $v_p(\mu)+v_p(\nu)=-2l-1$ then, without loss of generality, 
we can assume that $v_p(\mu)<v_p(\nu)$, so that $v_p(\mu)\les -l-1$.
It follows that $v_p(\mu+\nu) \les -l-1 <v_p(2p^{-l})=v_p(\mu+\nu)$, a contradiction.
\end{proof}


\subsection{Non-self-similarity results in dimension 3}\label{snssres3}

The main results of this relatively long technical section
are Propositions \ref{pll1nssp}, \ref{pl2nssp} and \ref{pl7nssp}.

\begin{remark}\label{rindpsubm2}
Let $L$ be a 3-dimensional $\bb{Z}_p$-lattice 
endowed with a basis $(x_0,x_1,x_2)$. 
For $e,f\in\bb{Z}_p$
we define index-$p$ submodules of $L$ by 
$L^{()}=\langle px_0,x_1,x_2 \rangle$,
$L^{(e)}=\langle x_0+ex_1,px_1,x_2\rangle$
and $L^{(e,f)}=\langle x_0+ex_2,x_1+fx_2,px_2\rangle$.
Any index-$p$ submodule of $L$ is isomorphic to $L^\xi$
for some $\xi = (), (e), (e,f)$. By changing $e$ or $f$ modulo $p$,
$L^{(e)}$ and $L^{(e,f)}$ do not change
(cf. \cite[Definition 2.22, Lemma 2.23]{NS2019}).
Observe that when $L^\xi$ is displayed as above, it is endowed with a basis.

Assume that $M$ is an index-$p$ submodule of $L$ endowed with a
basis $(y_0,y_1,y_2)$, and let $\varphi:M\rar L$ be a homomorphism of modules.
We denote by $\overline{F}=(F_{\alpha\beta})\in gl_3(\bb{Z}_p)$ 
the matrix of $\varphi$ relative to the respective
bases, namely, 
$\varphi(y_\beta) = \sum_{\alpha} F_{\alpha\beta}x_\alpha$
(cf. Section \ref{smet}).
\end{remark}

\vspace{5mm}

First, we treat the case where $\mr{dim}\,[L,L]=1$. 

\begin{lemma} \label{lll1}
Let $L$ be a 3-dimensional $\bb{Z}_p$-Lie lattice with $\mr{dim}[L,L]=1$.
Then the following holds.
\begin{enumerate}
\item \label{ldimcl}
$\mr{dim}\,Z(L)=1$.
\item \label{lcmm}
Let $M\subseteq L$ be a subalgebra of index $p$. Then
$Z(L) \subseteq M$ or $[M,M]=[L,L]$.
\end{enumerate}
\end{lemma}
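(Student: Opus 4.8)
The plan is to encode the entire bracket of $L$ into a single alternating form and to read off both assertions from its radical. Since $\mr{dim}[L,L]=1$, the submodule $[L,L]$ is free of rank $1$; fixing a generator $c$ of it and using that the bracket is antisymmetric, $\bb{Z}_p$-bilinear and takes values in $\gen{c}$, I can write $[a,b]=\omega(a,b)\,c$ for a unique alternating form $\omega\colon L\times L\ra\bb{Z}_p$. Because $c\neq 0$, an element $a\in L$ is central if and only if $\omega(a,L)=0$; that is, $Z(L)$ is precisely the radical of $\omega$. For part (\ref{ldimcl}) I would pass to $\bb{Q}_p$: the form $\omega$ induces a non-zero alternating form on the $3$-dimensional space $L\otimes\bb{Q}_p$, and a non-zero alternating form on an odd-dimensional space has even rank, necessarily equal to $2$, hence a radical of dimension $3-2=1$. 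Since $Z(L)=L\cap\mr{rad}(\omega\otimes\bb{Q}_p)$, this yields $\mr{dim}\,Z(L)=1$, and it also shows that $Z(L)$ is isolated in $L$, being the intersection of $L$ with a subspace.

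Isolatedness is what lets me set up convenient coordinates for part (\ref{lcmm}). I would pick a generator $z$ of $Z(L)$; since $Z(L)$ is isolated of rank $1$, the element $z$ is primitive and extends to a basis $(z,x_1,x_2)$ of $L$. As $z$ is central, for $a=a_0z+a_1x_1+a_2x_2$ and $b=b_0z+b_1x_1+b_2x_2$ one computes $[a,b]=(a_1b_2-a_2b_1)[x_1,x_2]$. Here $[x_1,x_2]\neq 0$ (otherwise $[L,L]=0$), so $[x_1,x_2]$ itself generates $[L,L]$; I may therefore take $c=[x_1,x_2]$, and then $\omega(a,b)=a_1b_2-a_2b_1$ is simply the determinant form in the coordinates $(a_1,a_2)$. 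In particular $\omega(x_1,x_2)=1$ and $z$ spans $\mr{rad}(\omega)$.

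For part (\ref{lcmm}) it suffices to prove the contrapositive of the disjunction: assuming $Z(L)\not\subseteq M$, I will show $[M,M]=[L,L]$. Let $\chi\colon L\ra L/M\cong\bb{Z}/p$ be the quotient map. The hypothesis means $z\notin M$, i.e. $\chi(z)\neq 0$, so after multiplying $\chi$ by a unit (which does not change $M=\ker\chi$) I may assume $\chi(z)=1$. Choosing lifts $\tilde e_i\in\bb{Z}_p$ of $\chi(x_i)$ and setting $u=x_1-\tilde e_1 z$ and $v=x_2-\tilde e_2 z$, I get $\chi(u)=\chi(v)=0$, so $u,v\in M$; and since $z$ lies in the radical of $\omega$, correcting $x_1,x_2$ by multiples of $z$ leaves the form unchanged, so $\omega(u,v)=\omega(x_1,x_2)=1$. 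Hence $c=[u,v]\in[M,M]$, and as $[M,M]\subseteq[L,L]=\gen{c}$ always holds, I conclude $[M,M]=[L,L]$. The only structural inputs are that $Z(L)$ is isolated (so $z$ extends to a basis) and that $z$ spans the radical of $\omega$, both furnished by part (\ref{ldimcl}); I expect the mildest obstacle to be just bookkeeping these two facts, since the remaining content is the elementary observation that the non-central basis vectors can be pushed into $M$ by central multiples without disturbing $\omega$.
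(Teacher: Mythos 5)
Your proof is correct, but it takes a genuinely different route from the paper's. The paper first puts $L$ into a normal form (a basis $(x_0,x_1,x_2)$ with $[x_1,x_2]=[x_0,x_2]=0$ and $[x_0,x_1]=p^s(p^rx_1+x_2)$), reads off $Z(L)=\gen{x_2}$ by inspection, and then runs through the three types $L^{()}$, $L^{(e)}$, $L^{(e,f)}$ of index-$p$ submodules from Remark \ref{rindpsubm2}: for the first two types $Z(L)\subseteq M$ is immediate, and for the third $[M,M]=[L,L]$ is a direct computation. You instead encode the whole bracket in a $\bb{Z}_p$-valued alternating form $\omega$ via a generator of $[L,L]$, obtain item (\ref{ldimcl}) from the parity of the rank of an alternating form on a $3$-dimensional $\bb{Q}_p$-space (which also gives you, for free, that $Z(L)$ is isolated), and prove item (\ref{lcmm}) by correcting the two non-central basis vectors by multiples of the central generator $z$ so that they land in $M$ without changing $\omega$. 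This is coordinate-free where the paper is computational: you avoid both the normal form and the case analysis over index-$p$ submodules, at the cost of introducing the form $\omega$ and the purity argument; the paper's version, by contrast, reuses machinery (the $L^\xi$ taxonomy) that is needed elsewhere anyway. A small bonus of your argument is that the proof of item (\ref{lcmm}) does not really use $\mr{dim}\,L=3$, only $\mr{dim}[L,L]=1$ and $Z(L)\not\subseteq M$. All steps check out: $[L,L]$ is free of rank $1$ as a submodule of a free module over a DVR, $Z(L)$ is exactly the radical of $\omega$, and replacing $x_i$ by $x_i-\tilde e_i z$ preserves $\omega$ because $z$ is in its radical.
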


\begin{proof}
There exist $s\in\bb{N}$, $r\in\bb{N}\cup\{\infty\}$
and a basis $(x_0,x_1,x_2)$ of $L$ such that 
$ [x_1,x_2]= 0$, 
$[x_0,x_1]= p^{s}(p^rx_1+x_2)$ and 
$[x_0,x_2]= 0$, where $p^\infty := 0$.
For item (\ref{ldimcl}), one easily checks that $Z(L)=\gen{x_2}$.
For item (\ref{lcmm}), one observes that 
if $M$ is of type $L^{()}$ or $L^{(e)}$ 
(cf. Remark \ref{rindpsubm2}) then $Z(L)\subseteq M$.
On the other hand, if $M$ is of type $L^{(e,f)}$, then it is 
a straightforward computation to show that $[M,M]=[L,L]$.
\end{proof}

\begin{lemma}\label{lsiminjll1}
Let $L$ be a 3-dimensional $\bb{Z}_p$-Lie lattice with $\mr{dim}[L,L]=1$.
Let $\varphi:M\rar L$ be a virtual endomorphism
of $L$.
If $\varphi$ is simple then $\varphi$ is injective.
\end{lemma}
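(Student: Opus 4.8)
The plan is to argue by contradiction: assuming that $\varphi\colon M\rar L$ is simple but \emph{not} injective, I would manufacture a non-zero $\varphi$-invariant ideal of $L$, thereby contradicting simplicity. The argument is entirely integral, and the only inputs it really consumes are that $M$ has finite index in $L$ and that $L$ is torsion-free as a $\bb{Z}_p$-module; in particular it does not lean on the hypothesis $\mr{dim}[L,L]=1$ beyond placing the lemma in its proper context.

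First I would record that $N:=\ker\varphi$ is an ideal of the subalgebra $M$: if $v\in N$ and $m\in M$, then $\varphi([m,v])=[\varphi(m),\varphi(v)]=0$, so $[m,v]\in N$. The delicate point is that $N$ is a priori only an ideal of $M$, whereas what I need is an ideal of $L$ that \emph{moreover} lies in the domain $M$ and is mapped into itself; this is exactly what $\varphi$-invariance demands (recall from the proof of Lemma~\ref{lssp} that a $\varphi$-invariant subset is forced to sit inside $D_\infty\subseteq M$).

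Next I would pass to the isolator $\tilde N:=\mr{iso}_L(N)$ and check that it is an ideal of $L$; here finite index does the work. Given $y\in L$ and $x\in\tilde N$, choose $a$ with $p^{a}y\in M$ and $k$ with $p^{k}x\in N$; since $N$ is an ideal of $M$, we get $p^{a+k}[y,x]=[p^{a}y,p^{k}x]\in N$, whence $[y,x]\in\tilde N$. Torsion-freeness then pins down $\varphi$ on $\tilde N\cap M$: if $x\in\tilde N\cap M$ with $p^{k}x\in N$, then $p^{k}\varphi(x)=\varphi(p^{k}x)=0$ forces $\varphi(x)=0$, so $\tilde N\cap M\subseteq N$.

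Finally I would shrink $\tilde N$ into the domain. Choosing $e$ with $p^{e}L\subseteq M$, set $I:=p^{e}\tilde N$. Then $I$ is a non-zero ideal of $L$ (a $\bb{Z}_p$-multiple of an ideal is an ideal, and $\tilde N\neq\{0\}$ because $N\neq\{0\}$ and $L$ is torsion-free), and $I\subseteq p^{e}L\subseteq M$, so $I\subseteq\tilde N\cap M\subseteq N=\ker\varphi$; hence $\varphi(I)=\{0\}\subseteq I$. Thus $I$ is a non-zero $\varphi$-invariant ideal of $L$, contradicting simplicity, and therefore $\varphi$ must be injective. The step I expect to require the most care is the bookkeeping around the definition of $\varphi$-invariance—guaranteeing that the produced ideal genuinely lands inside $M$ and is carried into itself—since this is precisely where the finite-index and torsion-free hypotheses are spent; the verification that $\tilde N$ is an ideal of $L$ is the one genuinely non-formal computation.
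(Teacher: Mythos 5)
Your proof is correct, and it takes a genuinely different and more general route than the paper's. The paper argues by cases on whether $\ker\varphi$ is contained in $Z(L)$: if so, it uses $\mr{dim}\,Z(L)=1$ to find $k$ with $p^kZ(L)\subseteq\ker\varphi$ and takes $I=p^kZ(L)$; if not, it produces a non-zero element $p^k[w,z]\in\ker\varphi$ lying in $\mr{iso}_L[L,L]=\gen{x}$ and takes the rank-one ideal it generates. Both branches lean on the one-dimensionality of $Z(L)$ and of $[L,L]$ established in Lemma \ref{lll1}. You instead observe that $N=\ker\varphi$ is an ideal of $M$, that its isolator $\tilde N=\mr{iso}_L(N)$ is an ideal of $L$ (by clearing denominators with the finite-index hypothesis), and that $p^e\tilde N$ for $p^eL\subseteq M$ is a non-zero ideal of $L$ sitting inside $M\cap\ker\varphi$, hence killed by $\varphi$ and trivially $\varphi$-invariant. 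All steps check out, including the point you flag about invariance requiring the ideal to land in the domain. What your argument buys is generality: it shows that a simple virtual endomorphism of \emph{any} $\bb{Z}_p$-Lie lattice is injective, with no restriction on dimension or on $\mr{dim}[L,L]$; what the paper's version buys is essentially nothing beyond brevity given that Lemma \ref{lll1} was already in hand for other purposes.
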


\begin{proof} 
Assume that $\varphi$ is not injective.
We exhibit a non-trivial $\varphi$-invariant ideal $I$ of $L$.

Case 1: $\mr{ker}\,\varphi\subseteq Z(L)$. 
Since $\mr{dim}\,Z(L)=1$
then $\mr{dim}\,\mr{ker}\,\varphi =1$, so that there exists $k\in\bb{N}$
such that $p^kZ(L)\subseteq \mr{ker}\,\varphi$. Thus, it suffices to take $I=p^kZ(L)$.

Case 2: $\mr{ker}\,\varphi\not\subseteq Z(L)$.
There exists $z\in\mr{ker}\,\varphi$ such that $z\not\in Z(L)$,
so $[w,z]\neq 0$ for some
$w\in L$. 
Since $M$ has finite index in $L$, there exists $k\in\bb{N}$
such that $p^kw\in M$. Hence, $p^k[w,z]\neq 0$ and $p^k[w,z]\in\mr{ker}\,\varphi$.
By taking $x\in L$ such that $\mr{iso}_L[L,L]=\gen{x}$,
one sees that $p^k[w,z]=ax$ for some 
$a\in\bb{Z}_p$ with $a\neq 0$. Thus, it suffices to take $I=\gen{ax}$.
\end{proof}

\begin{proposition}\label{pll1nssp}
Let $L$ be a 3-dimensional $\bb{Z}_p$-Lie lattice with $\mr{dim}[L,L]=1$. 
Then $L$ is not self-similar of index $p$.
\end{proposition}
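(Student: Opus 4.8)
The plan is to show that any virtual endomorphism $\varphi:M\rar L$ of index $p$ fails to be simple, by producing a non-zero $\varphi$-invariant ideal of $L$. Since $\mr{dim}[L,L]=1$, Lemma \ref{lll1}(\ref{ldimcl}) tells us that $Z(L)$ is 1-dimensional, and by Lemma \ref{lsiminjll1} it suffices to treat the case where $\varphi$ is injective (if $\varphi$ is not injective we are already done). So from now on I assume $\varphi$ is an injective virtual endomorphism of index $p$, and aim for a contradiction with simplicity.

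The first step is to normalize the data. Using the standard basis $(x_0,x_1,x_2)$ from the proof of Lemma \ref{lll1}, in which $[x_1,x_2]=0$, $[x_0,x_1]=p^s(p^rx_1+x_2)$ and $[x_0,x_2]=0$, the index-$p$ subalgebra $M$ is of one of the three shapes $L^{()}$, $L^{(e)}$, $L^{(e,f)}$ recorded in Remark \ref{rindpsubm2}. The key structural input is Lemma \ref{lll1}(\ref{lcmm}): either $Z(L)\subseteq M$, or else $[M,M]=[L,L]$. I would treat these two alternatives separately.

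In the first alternative, $Z(L)\subseteq M$. Here I would exploit that $\varphi$ must carry $\mr{iso}_L[L,L]$ and the central line $Z(L)=\gen{x_2}$ in a controlled way: since $[L,L]$ has rank $1$ and is $\varphi$-stable up to the module structure, the line $\mr{iso}_L[L,L]=\gen{x}$ is a natural candidate ideal, and one checks that a suitable $p^k\gen{x}$ (or $p^k Z(L)$) is $\varphi$-invariant because its image under $\varphi$ lands back inside it after clearing denominators. The point is that on a $1$-dimensional $\varphi$-stable subspace the restriction of $\Phi=\varphi\otimes\bb{Q}_p$ is just multiplication by a scalar, which always has a monic linear (hence integral after scaling) eigen-line, giving a non-trivial invariant ideal. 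In the second alternative, $[M,M]=[L,L]$, the derived subalgebra is preserved exactly, and again $[L,L]$ (or its isolator) furnishes the invariant ideal since $\varphi([M,M])\subseteq [L,L]$ by the homomorphism property.

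The main obstacle I anticipate is the bookkeeping in the $L^{(e,f)}$ case, where $Z(L)\not\subseteq M$ and one must instead work with $[M,M]=[L,L]$: here the interaction between the $p$-adic valuations $s$, $r$ and the parameters $e,f$ has to be tracked carefully to ensure that the candidate ideal is genuinely $\varphi$-stable over $\bb{Z}_p$ and not merely over $\bb{Q}_p$. I expect the cleanest route is to reduce everything to the $1$-dimensional action of $\Phi$ on $\mr{iso}_L[L,L]\otimes\bb{Q}_p$ and invoke the eigenvalue argument underlying Proposition \ref{pabelsimp}: a rank-one invariant subspace forces a monic integral factor of the characteristic polynomial, which is exactly the obstruction to simplicity. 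Assembling the two cases then shows no simple $\varphi$ of index $p$ can exist, so $L$ is not self-similar of index $p$.
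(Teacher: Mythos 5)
Your skeleton matches the paper's: reduce to injective $\varphi$ via Lemma \ref{lsiminjll1}, split according to Lemma \ref{lll1}(\ref{lcmm}), and take $I=[L,L]$ when $[M,M]=[L,L]$ (that half is correct, and in fact needs no injectivity). The gap is in the other half, where $Z(L)\subseteq M$. You propose to verify invariance of your candidate line by arguing that a rank-one $\Phi$-invariant subspace "forces a monic integral factor of the characteristic polynomial", so that "a suitable $p^k\gen{x}$ is $\varphi$-invariant after clearing denominators". This principle is false, and Proposition \ref{pabelsimp} is precisely a statement about when it fails: a $\bb{Q}_p$-line preserved by $\Phi=\varphi\otimes\bb{Q}_p$ contributes a linear factor $\lambda-\mu$ with $\mu\in\bb{Q}_p$, but this factor has coefficients in $\bb{Z}_p$ only when $\mu\in\bb{Z}_p$; if $v_p(\mu)<0$ then no lattice $p^k\gen{x}$ inside that line is $\varphi$-invariant (this is exactly the mechanism by which abelian lattices manage to be self-similar). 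So preservation of the line over $\bb{Q}_p$ buys you nothing by itself; you must prove integrality, i.e.\ that $\varphi$ maps an honest sublattice of the line into itself.

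The clean way to do this --- and what the paper does --- avoids eigenvalues entirely. Since $Z(L)\subseteq M$ and $M$ has finite index in $L$, one gets $Z(M)=Z(L)$: an element centralizing $M$ centralizes $p^kL$ for some $k$, hence $L$, by torsion-freeness. Injectivity gives $\mr{dim}\,\varphi(M)=\mr{dim}\,L$, so $\varphi(M)$ has finite index in $L$, and the same argument shows $\varphi(Z(M))\subseteq Z(L)$. Hence $\varphi(Z(L))=\varphi(Z(M))\subseteq Z(L)$ on the nose: $Z(L)$ itself, a non-zero one-dimensional ideal by Lemma \ref{lll1}(\ref{ldimcl}), is $\varphi$-invariant, with no scalar to control. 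Two smaller points: your alternative candidate $\mr{iso}_L[L,L]$ is not even contained in $M$ in general (e.g.\ for $M=L^{(e)}$ with $r=0$ in the normal form used in the proof of Lemma \ref{lll1}), and the bookkeeping you anticipate in the $L^{(e,f)}$ case is not needed, since by that same proof this is exactly the case $[M,M]=[L,L]$, i.e.\ the easy half.
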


\begin{proof}
Let $\varphi: M\rar L$ be a virtual endomorphism of $L$
of index $p$. We prove that $\varphi$ is not simple by
either referring to a previous result or by exhibiting
a non-trivial $\varphi$-invariant ideal $I$ of $L$.
 If $[M,M]=[L,L]$ then it suffices to take $I=[L,L]$.
Otherwise, by item (\ref{lcmm}) of Lemma \ref{lll1},
we have $Z(L)\subseteq M$. Then $Z(L)=Z(M)$. Also, 
if $\varphi$ is not injective then $\varphi$ 
is not simple (Lemma \ref{lsiminjll1}); hence, we can assume that $\varphi$
is injective. Then $\mr{dim}\,\varphi(M)=\mr{dim}\,L$, so that
$\varphi(Z(M))\subseteq Z(L)$.
Thus, it suffices to take $I=Z(L)$.
\end{proof}

\vspace{5mm}

Next, we treat the case where $\mr{dim}\,[L,L]=2$.

\begin{lemma}\label{linvsubm}
In the context of Remark \ref{rindpsubm2}, 
assume that $F_{01}=F_{02}=0$. Then the following holds.
\begin{enumerate}
\item \label{linvsubm1}
Assume that $M= L^{()}$. Then 
$\gen{x_1,x_2}$ is $\varphi$-invariant.
\item \label{linvsubm2}
Assume that $M = L^{(e)}$, 
$p|F_{11}$ and $p|F_{21}$. Then
$\gen{px_1,px_2}$ is $\varphi$-invariant.
\item \label{linvsubm3}
Assume that $M=L^{(e,f)}$,  
$p|F_{12}$ and $p|F_{22}$. Then
$\gen{px_1,px_2}$ is $\varphi$-invariant.
\item \label{linvsubm4}
Assume that $M=L^{(e,f)}$, $f\equiv_p 0$, 
$p|F_{21}$ and $p|F_{22}$. Then
$\gen{x_1,px_2}$ is $\varphi$-invariant.
\end{enumerate}
\end{lemma}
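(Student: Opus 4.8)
The plan is to unwind the definition of $\varphi$-invariance directly: a submodule $N\subseteq L$ is $\varphi$-invariant exactly when $\varphi(N\cap M)\subseteq N$. The entire force of the hypothesis $F_{01}=F_{02}=0$ is the single observation that $\varphi(y_1)=F_{11}x_1+F_{21}x_2$ and $\varphi(y_2)=F_{12}x_1+F_{22}x_2$ both lie in $\gen{x_1,x_2}$; the additional divisibility hypotheses appearing in items (\ref{linvsubm2})--(\ref{linvsubm4}) will be precisely what is needed to push these images down into the smaller lattice $N$.

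First I would fix, in each item, the good basis $(y_0,y_1,y_2)$ of $M$ read off from Remark \ref{rindpsubm2}, verify that the proposed $N$ is contained in $M$ (so that $N\cap M=N$), and then it suffices to evaluate $\varphi$ on a generating set of $N$. For item (\ref{linvsubm1}), $M=L^{()}$ gives $y_1=x_1,\ y_2=x_2$, so $N=\gen{x_1,x_2}=\gen{y_1,y_2}\subseteq M$ and $\varphi(N)\subseteq\gen{x_1,x_2}=N$ is immediate. For item (\ref{linvsubm2}), $M=L^{(e)}$ gives $y_1=px_1,\ y_2=x_2$, so $px_1=y_1$ and $px_2=py_2$ generate $N\subseteq M$; computing $\varphi(px_1)=F_{11}x_1+F_{21}x_2$ and $\varphi(px_2)=p(F_{12}x_1+F_{22}x_2)$, the hypotheses $p\mid F_{11},F_{21}$ place both in $\gen{px_1,px_2}=N$.

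The two cases with $M=L^{(e,f)}$, where $y_1=x_1+fx_2$ and $y_2=px_2$, require one bookkeeping step, namely rewriting the $x$-basis generators of $N$ in the $y$-basis. In item (\ref{linvsubm3}) I would use $px_1=py_1-fy_2$ and $px_2=y_2$ to see $N\subseteq M$, compute $\varphi(px_1)=(pF_{11}-fF_{12})x_1+(pF_{21}-fF_{22})x_2$ and $\varphi(px_2)=F_{12}x_1+F_{22}x_2$, and observe that $p\mid F_{12},F_{22}$ makes every coordinate divisible by $p$, so $\varphi(N)\subseteq\gen{px_1,px_2}=N$. In item (\ref{linvsubm4}) the hypothesis $f\equiv_p 0$ lets me write $f=pf'$, whence $x_1=y_1-f'y_2\in M$ and $px_2=y_2\in M$ generate $N=\gen{x_1,px_2}\subseteq M$; then $\varphi(x_1)=(F_{11}-f'F_{12})x_1+(F_{21}-f'F_{22})x_2$ has its $x_2$-coordinate divisible by $p$ (since $p\mid F_{21},F_{22}$), while $\varphi(px_2)=F_{12}x_1+F_{22}x_2$ has $x_2$-coordinate $F_{22}\equiv_p 0$, so both images lie in $\gen{x_1,px_2}=N$.

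There is no genuine obstacle: the argument is entirely linear-algebraic and collapses to a handful of divisibility checks. The only point that demands care — and the sole place a coefficient or sign slip could enter — is the coordinate change from the $x$-generators of $N$ to the $y$-basis of $M$ in the two $L^{(e,f)}$ cases, where in item (\ref{linvsubm4}) the hypothesis $f\equiv_p 0$ is used exactly to guarantee that $x_1$ itself belongs to $M$.
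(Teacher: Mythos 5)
Your proof is correct: in each case you verify both that the proposed submodule lies in $M$ and that its generators map back into it, and the coordinate changes in the two $L^{(e,f)}$ cases (including the use of $f\equiv_p 0$ to get $x_1\in M$ in item (\ref{linvsubm4})) are handled accurately. The paper simply leaves this verification to the reader, and your direct computation is exactly the intended argument.
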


\begin{proof}
We leave the simple proof to the reader.
\end{proof}

\begin{lemma}\label{lLe}
Let $L$ be a 3-dimensional $\bb{Z}_p$-Lie lattice with $\mr{dim}[L,L]=2$,
and $\bm{x}=(x_0,x_1,x_2)$ be a good basis of $L$. 
Let $M = \gen{x_0+ex_1,px_1,x_2}$ for some $e\in\bb{Z}_p$,
assume that $M$ is a subalgebra of $L$, and let $\varphi: M\rar L$ 
be a homomorphism of algebras. 
Let 
$$
A=p^{s}
\left[
\begin{array}{ll}
a & b\\
c & d
\end{array}
\right]
\qquad s\in\bb{N},\quad 
a,b,c,d\in\bb{Z}_p
$$
be the matrix of $L$ with respect to $\bm{x}$.
Moreover, assume that one of the following conditions is true:
\begin{enumerate}
\item $v_p(b)=0$; or
\item $a=d=1$, $v_p(b)\les v_p(c)$ and $b\neq 0$.
\end{enumerate}
Then $\gen{px_1,px_2}$ is a $\varphi$-invariant ideal of $L$.
\end{lemma}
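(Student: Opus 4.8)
The goal is to show that $\gen{px_1,px_2}$ is a $\varphi$-invariant ideal, and the plan is to deduce this from Lemma~\ref{linvsubm}(\ref{linvsubm2}). Writing $\bm{y}=(x_0+ex_1,\,px_1,\,x_2)$ for the displayed basis of $M=L^{(e)}$ and letting $\ol F=(F_{\alpha\beta})$ be the matrix of $\varphi$ as in Remark~\ref{rindpsubm2}, it therefore suffices to establish that $F_{01}=F_{02}=0$ and that $p\mid F_{11}$ and $p\mid F_{21}$. (That $\gen{px_1,px_2}$ is an ideal is immediate, since $\gen{x_1,x_2}$ is an abelian ideal and $A$ has entries in $\bb{Z}_p$, so the whole point is the $\varphi$-invariance.)

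First I would set up the matrix equations. Since $M\cap\gen{x_1,x_2}=\gen{px_1,x_2}=\gen{y_1,y_2}$ and $\mr{dim}[L,L]=2$, Lemma~\ref{lsmetB} shows that $\bm{y}$ is a good basis of $M$ with matrix $B=U^{-1}AU=p^s\left[\begin{smallmatrix}a&p^{-1}b\\ pc&d\end{smallmatrix}\right]$, where $U=\mr{diag}(p,1)$ (note $U_{00}=1$). Corollary~\ref{cFeqs}(\ref{cFeqs1}) then gives $FB=F_{00}AF$, while Lemma~\ref{lmetmor}(\ref{lmetmor2}), using $\mr{dim}[M,M]=2$, gives $F_{01}=F_{02}=0$, settling the first requirement. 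The key reformulation is to conjugate: setting $G:=FU^{-1}=\left[\begin{smallmatrix}p^{-1}F_{11}&F_{12}\\ p^{-1}F_{21}&F_{22}\end{smallmatrix}\right]$ and $A'=p^{-s}A=\left[\begin{smallmatrix}a&b\\ c&d\end{smallmatrix}\right]$, the identity $FB=F_{00}AF$ becomes $GA'=F_{00}A'G$ (after cancelling $p^s$ and the right factor $U$), and the remaining goal $p\mid F_{11},F_{21}$ is exactly $G\in gl_2(\bb{Z}_p)$. Observe that $G_{12}=F_{12}$ and $G_{22}=F_{22}$ already lie in $\bb{Z}_p$.

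Next I would read off entries of $GA'=F_{00}A'G$. The $(1,2)$ and $(2,2)$ entries yield $bG_{11}\in\bb{Z}_p$ and $bG_{21}\in\bb{Z}_p$, so in case~(1), where $v_p(b)=0$, we immediately get $G_{11},G_{21}\in\bb{Z}_p$ and are done. In case~(2), where $a=d=1$, $b\neq0$ and $v_p(b)\les v_p(c)$, I would split on whether $F_{00}=1$. If $F_{00}\neq 1$, combining the $(1,1)$ and $(2,2)$ entries and dividing by $F_{00}-1$ gives the closed form $G_{11}=-(F_{00}+1)cG_{12}-F_{00}G_{22}\in\bb{Z}_p$; combining the $(2,1)$ and $(1,2)$ entries and dividing by $F_{00}-1$ gives $G_{21}=-(F_{00}+1)cG_{22}-b^{-1}cF_{00}G_{12}$, which lies in $\bb{Z}_p$ because $b^{-1}c\in\bb{Z}_p$ by $v_p(b)\les v_p(c)$. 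If instead $F_{00}=1$, then $G$ commutes with $A'$, which forces $G_{11}=G_{22}\in\bb{Z}_p$ and $bG_{21}=cG_{12}$, whence again $G_{21}=b^{-1}cG_{12}\in\bb{Z}_p$. In all cases $G\in gl_2(\bb{Z}_p)$, and Lemma~\ref{linvsubm}(\ref{linvsubm2}) finishes the argument.

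The hard part is case~(2): the naive valuation bounds coming from the $(1,2)$ and $(2,2)$ entries are vacuous once $v_p(b)\ges 1$, so one is forced to exploit the full commutation relation $GA'=F_{00}A'G$ rather than individual entries. The genuinely delicate point is the borderline value $F_{00}=1$ (equivalently $F_{00}\equiv_p 1$), where the division by $F_{00}-1$ used in the generic computation breaks down and one must instead argue through the centralizer of $A'$; in every branch the integrality of the first column of $G$ hinges precisely on the hypothesis $v_p(b)\les v_p(c)$, entering through the factor $b^{-1}c$.
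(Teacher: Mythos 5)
Your proof is correct, and it follows the same overall strategy as the paper: derive the matrix relation $FB=F_{00}AF$ from Corollary \ref{cFeqs}, note $F_{0i}=0$ since $\mr{dim}[M,M]=2$, and reduce to showing $p\mid F_{11}$ and $p\mid F_{21}$ so that Lemma \ref{linvsubm}(\ref{linvsubm2}) applies. The only real divergence is in how case (2) is closed: the paper keeps the four scalar equations in the $F_{ij}$ and argues by valuation estimates, splitting on whether $v_p(1-F_{00})\ges v_p(b)$ or not, whereas you rescale to $G=FU^{-1}$, exploit the cleaner relation $GA'=F_{00}A'G$, and eliminate to get exact closed forms for $G_{11}$ and $G_{21}$, splitting only on $F_{00}=1$ versus $F_{00}\neq 1$. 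Both are valid; your version trades the paper's valuation bookkeeping for explicit formulas in which the integrality (via $b^{-1}c\in\bb{Z}_p$) is visible at a glance, while the paper's version avoids introducing the auxiliary matrix $G$ and the division by $F_{00}-1$.
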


\begin{proof}
Clearly, $I=\gen{px_1,px_2}$ is an ideal of $L$. Let $B$ be the matrix 
of $M$ with respect to the displayed basis, and $\ol{F}$ be the matrix of $\varphi$.
From item (\ref{cFeqs1}) of Corollary \ref{cFeqs} it follows that $FB=F_{00}AF$,
and this matrix equation is equivalent to the system of scalar equations
\begin{eqnarray}
a(1-F_{00})F_{11}+pcF_{12}-bF_{00}F_{21}&=&0 \label{eql1}\\[2pt]
-cF_{00}F_{11}+(a-dF_{00})F_{21}+pcF_{22}&=&0\label{eql2}\\[2pt]
bF_{11}+p(d-aF_{00})F_{12}-pbF_{00}F_{22}&=&0\label{eql3}\\[2pt]
-pcF_{00}F_{12} + bF_{21}+pd(1-F_{00})F_{22}&=&0\label{eql4}.
\end{eqnarray}
From item (\ref{linvsubm2}) of Lemma \ref{linvsubm} it is enough to show
that $p|F_{11}$ and $p|F_{21}$. Indeed, we claim  that $p|F_{11}$ and $p|F_{21}$,
and we proceed to prove the claim.
In case $v_p(b)=0$, the claim follows from
equations (\ref{eql3}) and (\ref{eql4}). Assume 
$a=d=1$ and $r:=v_p(b)\les v_p(c)$. If $v_p(1-F_{00})\ges r$ the claim
follows again from equations (\ref{eql3}) and (\ref{eql4}). 
In case $v_p(1-F_{00})<r$
the claim follows from the equations (\ref{eql1}) and (\ref{eql2}).
\end{proof}

\begin{proposition}\label{pl2nssp}
Let $s,r\in\bb{N}$ with $r\ges 1$, and $c\in\bb{Z}_p$ with 
$v_p(c)\neq 1$.
Then $L_2(s,r,c)$ is not self-similar of index $p$.
\end{proposition}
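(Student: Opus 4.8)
The plan is to fix an arbitrary virtual endomorphism $\varphi\colon M\rar L$ of index $p$ and to produce, in every case, a non-zero $\varphi$-invariant ideal, so that $\varphi$ fails to be simple. With respect to a good basis the matrix of $L$ is $A=\left(\begin{smallmatrix} p^s & p^{s+r}\\ p^{s+r}c & p^s\end{smallmatrix}\right)$; since $\det A=p^{2s}(1-p^{2r}c)$ and $1-p^{2r}c$ is a unit (as $r\ges 1$), we have $\mr{rk}(A)=2$, i.e. $\mr{dim}\,[L,L]=2$, and $A$ is non-scalar. By Remark \ref{rindpsubm2}, $M$ is of type $L^{()}$, $L^{(e)}$ or $L^{(e,f)}$. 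If $M=L^{()}$ then $\gen{x_1,x_2}\subseteq M$ and item (\ref{cFeqs2}) of Corollary \ref{cFeqs} yields the invariant ideal $\gen{x_1,x_2}$. If $M=L^{(e)}$ then condition (2) of Lemma \ref{lLe} applies, since in its notation $a=d=1$, $b=p^r\neq 0$, and $v_p(b)=r\les r+v_p(c)$, the latter being the valuation of the lower-left coefficient; this yields the invariant ideal $\gen{px_1,px_2}$. As both cases hold for every $c$, the hypothesis $v_p(c)\neq 1$ can matter only for the type $L^{(e,f)}$.

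For $M=L^{(e,f)}$ I would use the good basis $y_0=x_0+ex_2$, $y_1=x_1+fx_2$, $y_2=px_2$, so that $M\cap\gen{x_1,x_2}=\gen{y_1,y_2}$; by Lemma \ref{lsmetB} the matrix of $M$ is the $\bb{Q}_p$-conjugate $B=U^{-1}AU$ with $U=\left(\begin{smallmatrix}1&0\\f&p\end{smallmatrix}\right)$, and it is integral because $r\ges 1$. Writing $\ol F$ for the matrix of $\varphi$ and $\phi=F_{00}$, item (\ref{lmetmor2}) of Lemma \ref{lmetmor} gives $F_{0i}=0$ (as $\mr{dim}\,[M,M]=\mr{dim}\,[L,L]=2$), and item (\ref{cFeqs1}) of Corollary \ref{cFeqs} gives the relation $FB=\phi AF$ for the $2\times 2$ blocks. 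Reducing this relation modulo $p$ and using $r\ges 1$ forces $(1-\phi)F_{12}\equiv(1-\phi)F_{22}\equiv 0$; hence if $\phi\not\equiv 1\pmod p$ then $p\mid F_{12}$ and $p\mid F_{22}$, and item (\ref{linvsubm3}) of Lemma \ref{linvsubm} supplies the invariant ideal $\gen{px_1,px_2}$. So I may assume $\phi\equiv 1\pmod p$.

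Taking determinants in $FB=\phi AF$ and using $\det B=\det A\neq 0$ gives $\det F\,(1-\phi^2)=0$. Let $G=FU^{-1}$ be the matrix of the $\bb{Q}_p$-extension $\Phi$ of $\psi:=\varphi|_{\gen{y_1,y_2}}$, so $GA=\phi AG$; note also that $A$ preserves $\gen{y_1,y_2}$, since there it agrees with $\mr{ad}(y_0)$, whose matrix $B$ is integral. If $\det F=0$, then $\ker G\neq 0$ is $A$-invariant ($Gv=0$ implies $GAv=\phi AGv=0$), so $\ker G\cap\gen{y_1,y_2}$ is a non-zero $A$-invariant sublattice killed by $\varphi$, hence the required invariant ideal. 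If $\det F\neq 0$, then $\phi^2=1$, and since $\phi\equiv 1\pmod p$ with $p$ odd we get $\phi=1$, so $G$ commutes with $A$; as $A$ is non-scalar, $G=aI+bA$ for some $a,b\in\bb{Q}_p$, its characteristic polynomial is $\chi(\lambda)=\lambda^2-2g\lambda+(g^2-ch^2)$ with $g=a+bp^s$, $h=bp^{s+r}$, and its roots are $g\pm h\sqrt{c}$. Because $A$ commutes with $\Phi$ and preserves $\gen{y_1,y_2}$, the maximal $\psi$-invariant sublattice is $A$-invariant and $\varphi$-invariant, and by Proposition \ref{pabelsimp} it is non-zero exactly when $\chi$ has a monic irreducible factor over $\bb{Z}_p$. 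Thus simplicity forces $\chi\notin\bb{Z}_p[\lambda]$ and $\chi$ to have no root in $\bb{Z}_p$.

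Contradicting this when $v_p(c)\neq 1$ is the main obstacle, and it is where the valuation bookkeeping concentrates. From $F=GU$ one reads $F_{22}=pg$, $F_{12}=ph$, $F_{11}=g+fh$, $F_{21}=ch+fg$, so integrality of $F$ gives $g,h\in p^{-1}\bb{Z}_p$ together with $p\mid F_{22}+fF_{12}$ and $p\mid cF_{12}+fF_{22}$. If $g,h\in\bb{Z}_p$ then $\chi\in\bb{Z}_p[\lambda]$, a contradiction; otherwise $F_{12}$ or $F_{22}$ is a unit, and the first divisibility forces $F_{12}$ to be the unit with $F_{22}\equiv -fF_{12}$, whence the second collapses to $(c-f^2)F_{12}\equiv 0$, i.e. $c\equiv f^2\pmod p$. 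I would then split on $v_p(c)$: if $v_p(c)=0$, then $c$ is a non-zero square, lifts by Hensel to $c=\gamma^2$ with $\gamma\equiv\pm f$, and one of $g\pm h\gamma=(F_{22}\pm F_{12}\gamma)/p$ lies in $\bb{Z}_p$, yielding a root of $\chi$ in $\bb{Z}_p$; if $v_p(c)\ges 2$, then $c\equiv f^2\pmod p$ forces $f\equiv 0$, hence $p\mid F_{22}$ and $g\in\bb{Z}_p$, and then $g^2-ch^2\in\bb{Z}_p$, so $\chi\in\bb{Z}_p[\lambda]$. In each case $\chi$ acquires a monic irreducible $\bb{Z}_p$-factor, contradicting simplicity. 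The borderline quantity $g^2-ch^2$ remains outside $\bb{Z}_p$ precisely when $v_p(c)=1$, which is exactly the self-similar case of Lemma \ref{lssp}.
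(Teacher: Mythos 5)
Your handling of the cases $M=L^{()}$ and $M=L^{(e)}$ coincides with the paper's. For the decisive case $M=L^{(e,f)}$ your route is genuinely different: the paper works directly with the four scalar equations coming from $FB=F_{00}AF$ (splitting on $f\not\equiv_p 0$ versus $f\equiv_p 0$, then on $v_p(1-F_{00})$ against $r$ and on $v_p(c)$), each time extracting divisibilities $p\mid F_{ij}$ that feed into Lemma \ref{linvsubm}. You instead pass to the conjugated relation $GA=F_{00}AG$, reduce to $F_{00}=1$, and exploit the fact that the commutant of the non-scalar matrix $A$ is $\bb{Q}_p[A]\simeq\bb{Q}_p[\sqrt{c}]$; the characteristic polynomial $\chi$ of $G$ then acquires a monic irreducible factor over $\bb{Z}_p$ precisely because $v_p(c)\neq 1$. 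This is more conceptual --- it makes visible that $v_p(c)=1$ is exceptional because that is exactly when $\bb{Q}_p(\sqrt{c})$ is ramified, matching the self-similar case of Lemma \ref{lssp} --- at the price of invoking the eigenvalue machinery of Proposition \ref{pabelsimp} where the paper stays with elementary valuation bookkeeping.

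Two points need attention. First, and more seriously: the proposition carries no hypothesis on $p$ (and the paper's proof uses none), but your argument is restricted to odd $p$ in two essential places: the implication that $\phi\equiv 1\ (\mathrm{mod}\ p)$ and $\phi^2=1$ force $\phi=1$ fails for $p=2$ (where $\phi=-1$ survives and $G$ then anticommutes with $A$), and the Hensel lifting of a square root of $c$ from $c\equiv f^2\ (\mathrm{mod}\ p)$ is vacuous modulo $2$. So as written you have not proved the statement for $p=2$. Second, a repairable imprecision: Proposition \ref{pabelsimp} concerns the virtual endomorphism of $\bb{Z}_p^2$ whose domain is the full preimage of $\bb{Z}_p^2$, not your $\psi$ with domain $\gen{y_1,y_2}$, and in any case you need not merely a non-zero invariant submodule but a non-zero ideal of $L$ contained in $M$. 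This is easily supplied: if $\chi$ has a root $\mu\in\bb{Z}_p$ take $I=\ker(\Phi-\mu)\cap\gen{y_1,y_2}$, and if $\chi\in\bb{Z}_p[\lambda]$ is irreducible take $I=p^k\bigl(\gen{y_1,y_2}+\Phi\gen{y_1,y_2}\bigr)$ for $k$ large; both are $A$-invariant (hence ideals of $L$) because $A$ commutes with $\Phi$ and preserves $\gen{y_1,y_2}$. But this step should be spelled out rather than attributed to Proposition \ref{pabelsimp}.
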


\begin{proof}
Observe that $\mr{dim}[L,L]=2$.
Let $\varphi :M\rar L$ be a virtual endomorphism of $L$ of index $p$.
We will show that there exists a non-trivial $\varphi$-invariant ideal $I$ 
of $L$, from which the proposition follows.
Observe that $\gen{x_1,x_2}$, $\gen{px_1,px_2}$ and $\gen{x_1,px_2}$
are non-trivial ideals of $L$. The $\varphi$-invariance of the various 
$I$ defined below follows from Lemma \ref{lmetmor}(\ref{lmetmor2}) 
and Lemma \ref{linvsubm}.
Observe that the matrix equation $FB=F_{00}AF$ of item (\ref{cFeqs1}) of 
Corollary \ref{cFeqs} holds.
We divide the proof in four cases.

Case 1: $M=L^{()}$. It suffices to take $I = \gen{x_1,x_2}$.
Case 2: $M=L^{(e)}$. It suffices to take $I = \gen{px_1,px_2}$ (Lemma \ref{lLe}).
Case 3: $M = L^{(e,f)}$ with $f\not\equiv_p 0$.
The matrix equation $FB=F_{00}AF$ implies that the following equations
hold true:
$$
\left\{
\begin{array}{l}
p(1+p^rf-F_{00})F_{11}+(-f-p^rf^2+p^rc+p^2f)F_{12}-p^{r+1}F_{00}F_{21}=0\\[2pt]
-p^{r+1}cF_{00}F_{11}+p(1+p^rf-F_{00})F_{21}+(-f-p^rf^2+p^rc+p^2f)F_{22}=0,
\end{array}
\right.
$$
from which we can see that $p|F_{12}$ and $p|F_{22}$. 
Thus, it suffices to take $I=\gen{px_1,px_2}$.
Case 4: $M = L^{(e,0)}$.
The matrix equation $FB=F_{00}AF$ is equivalent to the equations
\begin{eqnarray}
(1-F_{00})F_{11}+p^{r-1}cF_{12}-p^rF_{00}F_{21}&=&0\label{eql21}\\[2pt]
-p^rcF_{00}F_{11}+(1-F_{00})F_{21}+p^{r-1}cF_{22}&=&0\label{eql22}\\[2pt]
p^{r+1}F_{11}+(1-F_{00})F_{12}-p^rF_{00}F_{22}&=&0 \label{eql23}\\[2pt]
-p^rcF_{00}F_{12}+p^{r+1}F_{21}+(1-F_{00})F_{22}&=&0.\label{eql24}
\end{eqnarray}
If $v_p(1-F_{00})<r$ then equations (\ref{eql23}) and (\ref{eql24}) 
imply that $p|F_{12}$ and $p|F_{22}$, and we can take 
$I=\gen{px_1, px_2}$.
Assume $l:=v_p(1-F_{00})\ges r$. Observe that, since $r\ges 1$, $F_{00}\in\bb{Z}_p^*$.
We divide the proof into two cases,
according whether $v_p(c)\ges 2$ or $v_p(c)=0$.
\begin{enumerate}
\item Assume $v_p(c)\ges 2$.
 \begin{enumerate}
 \item Assume $l\ges r+1$. 
 From equation (\ref{eql23}), we have $p|F_{22}$, 
 so that $p|F_{21}$ (equation (\ref{eql24})). Thus, it suffices to take
 $I=\gen{x_1,px_2}$.
 \item Assume $l=r$. 
 From equation (\ref{eql24}), we have $p|F_{22}$, 
 so that $p|F_{12}$ (equation (\ref{eql23})). 
 Thus, it suffices to take $I=\gen{px_1,px_2}$.
 \end{enumerate}
\item Assume $v_p(c)=0$. 
 From equation (\ref{eql21}), we have $p|F_{12}$; 
 from equation (\ref{eql22}), we have $p|F_{22}$. 
 Thus, it suffices to take $I=\gen{px_1,px_2}$.
\end{enumerate}
\end{proof}

\begin{lemma}\label{lC7Feq}
Let $s\in\bb{N}$ and $a,c,e,f\in\bb{Z}_p$ with $c\neq 0$. 
Define $L=L_{\ref*{C7}}(s,a,c)$, where $L$ 
is endowed with the basis $(x_0,x_1,x_2)$ given in Definition
\ref{dlielat}.
Let $M=\gen{x_0+ex_2, x_1+fx_2,px_2}$ and assume that $M$ is a subalgebra of $L$.
Let $\varphi:M\rar L$ be homomorphism of algebras,
and $F$ be the matrix of $\varphi$ with respect to the given bases.
Then
\begin{eqnarray}
pF_{21} - F_{00}[pfF_{11}+(c-af-f^2)F_{12}] & = & 0\label{eq1}\\[3pt]
F_{22} - F_{00}[pF_{11} - (a+f)F_{12}] & = & 0\label{eq2} \\[3pt]
(F_{00}-1)[-p(1+F_{00})F_{11}+(f(1+F_{00})+aF_{00})F_{12}] &=& 0\label{eq3}\\[3pt]
(F_{00}-1)[p(a+f(1+F_{00}))F_{11}+(1+F_{00})(c-af-f^2)F_{12}]&=&0\label{eq4}.
\end{eqnarray}
\end{lemma}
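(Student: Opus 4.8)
The plan is to read the four equations off a single matrix identity furnished by the metabelian machinery of Section \ref{smet}. First I would record that, with respect to the good basis $\bm{x}=(x_0,x_1,x_2)$, the matrix of $L=L_{\ref*{C7}}(s,a,c)$ is $A=p^s\left[\begin{smallmatrix} a & 1\\ c & 0\end{smallmatrix}\right]$. Its determinant is $-p^{2s}c\neq 0$, so $\mr{dim}[L,L]=2$. The given basis $\bm{y}=(x_0+ex_2,\,x_1+fx_2,\,px_2)$ of $M$ has no $x_0$-component in its last two vectors, whence $M\cap\gen{x_1,x_2}=\gen{x_1+fx_2,\,px_2}$; since $M$ has finite index, $\varphi$ is a virtual endomorphism, and Corollary \ref{cFeqs}(\ref{cFeqs1}) applies, yielding the matrix equation $FB=F_{00}AF$, where $F=(F_{ij})$ denotes the Latin block of $\ol{F}$.

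Next comes a short computation. Writing $U=\left[\begin{smallmatrix}1 & 0\\ f & p\end{smallmatrix}\right]$ for the Latin block of the change-of-basis matrix and noting $U_{00}=1$, Lemma \ref{lsmetB} gives $B=U^{-1}AU=p^s\left[\begin{smallmatrix} a+f & p\\ (c-af-f^2)/p & -f\end{smallmatrix}\right]$. Expanding $FB=F_{00}AF$ entrywise and cancelling the common factor $p^s$ produces four scalar equations, one per matrix entry; I will call them (I)--(IV), with (I),(II) the first row and (III),(IV) the second. As expected, the parameter $e$ never appears, matching the fact that the target equations are $e$-free.

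The core of the proof is to turn (I)--(IV) into the stated system by eliminating $F_{21}$ and $F_{22}$. I would begin with the second-row equations (III) and (IV), which are linear in $F_{21},F_{22}$: solving them and invoking the identity $(a+f)f+(c-af-f^2)=c$ causes a factor $c$ to appear, and cancelling it---precisely where the hypothesis $c\neq 0$ is used---gives exactly equation (\ref{eq2}); back-substituting into (IV) then yields (\ref{eq1}). Finally, inserting the expression for $F_{22}$ from (\ref{eq2}) into the first-row equation (II) and factoring out $(F_{00}-1)$ gives (\ref{eq3}), while inserting the expression for $F_{21}$ from (\ref{eq1}) into (I) and factoring out $(F_{00}-1)$ gives (\ref{eq4}). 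The only step requiring real care is this last elimination: one must repeatedly use $1-F_{00}^2=(1-F_{00})(1+F_{00})$ and $F_{00}^2-F_{00}=F_{00}(F_{00}-1)$ to extract the common factor $(F_{00}-1)$, but beyond this bookkeeping the argument is routine.
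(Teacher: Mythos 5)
Your proposal is correct and follows the same route as the paper, which simply cites Corollary \ref{cFeqs}(\ref{cFeqs1}) to obtain $FB=F_{00}AF$ and leaves the expansion to the reader. Your computations of $A$, $B=U^{-1}AU$, the four entrywise equations, and the eliminations (using $\det = -c\neq 0$ for (\ref{eq1})--(\ref{eq2}) and the factorization of $1-F_{00}^2$ for (\ref{eq3})--(\ref{eq4})) all check out.
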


\begin{proof}
The result follows from Corollary \ref{cFeqs}(\ref{cFeqs1}).
\end{proof}

\begin{lemma}\label{lC7thenI}
In the context of Lemma \ref{lC7Feq},
the following holds.
\begin{enumerate}
\item \label{lC7thenI0}
Assume $p|F_{12}$. Then $\gen{px_1,px_2}$ is a $\varphi$-invariant ideal of $L$.
\item \label{lC7thenI3}
Assume $c-af-f^2=0$, $a\neq 0$, $2f+a\not\equiv_p 0$, and
$F_{12}\in\bb{Z}_p^*$. Then $\gen{x_1+fx_2}$ 
is a $\varphi$-invariant ideal of $L$.
\item \label{lC7thenI4}
Assume $p\ges 3$, $f=-2^{-1}a$, $v_p(a)=0$, $v_p(4c+a^2)\ges 2$ and $F_{12}\neq 0$.
Then $\gen{x_1-2^{-1}ax_2, px_2}$ is a $\varphi$-invariant ideal of $L$.
\end{enumerate}
\end{lemma}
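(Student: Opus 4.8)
The plan is to exploit throughout that the matrix $\ol{F}=(F_{\alpha\beta})$ of $\varphi$ is constrained by the four scalar identities (\ref{eq1})--(\ref{eq4}) of Lemma \ref{lC7Feq}. First I would record one fact common to all three items. Since $c\neq 0$, the matrix of $L$ with respect to $\bm{x}$ is invertible over $\bb{Q}_p$, so $\mr{dim}[L,L]=\mr{dim}[M,M]=2$; hence Lemma \ref{lmetmor}(\ref{lmetmor2}) gives $F_{01}=F_{02}=0$, which places us in the setting of Lemma \ref{linvsubm}. Item (\ref{lC7thenI0}) is then short: feeding $p\mid F_{12}$ into equation (\ref{eq2}), which reads $F_{22}=F_{00}[pF_{11}-(a+f)F_{12}]$, immediately yields $p\mid F_{22}$, and the $\varphi$-invariance of $\gen{px_1,px_2}$ is exactly Lemma \ref{linvsubm}(\ref{linvsubm3}).

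For item (\ref{lC7thenI3}) I would first check that $I=\gen{x_1+fx_2}=\gen{y_1}$ is an ideal: the hypothesis $c-af-f^2=0$ says $c=f(a+f)$, whence $[x_0,x_1+fx_2]=p^s(a+f)(x_1+fx_2)\in I$. As $y_1\in M$ and $F_{01}=0$, the invariance of $I$ amounts to $\varphi(y_1)=F_{11}x_1+F_{21}x_2\in I$, i.e.\ to $F_{21}=fF_{11}$ or $F_{11}=0$. Specializing (\ref{eq1}) to $c-af-f^2=0$ gives $F_{21}=F_{00}fF_{11}$, settling the case $F_{00}=1$. When $F_{00}\neq 1$ I would argue by contradiction assuming $F_{11}\neq 0$: equation (\ref{eq4}) then forces $a+f(1+F_{00})=0$, and substituting this into (\ref{eq3}) and simplifying produces $(2f+a)F_{12}=pF_{11}$; since $F_{12}\in\bb{Z}_p^*$ and $2f+a\not\equiv_p 0$ the left-hand side is a unit while the right-hand side lies in $p\bb{Z}_p$, a contradiction. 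Hence $F_{11}=0$, so $F_{21}=0$ and $\varphi(y_1)=0\in I$.

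For item (\ref{lC7thenI4}), setting $f=-2^{-1}a$ I would first record $2f+a=0$, $a+f=2^{-1}a$ and $c-af-f^2=4^{-1}(4c+a^2)$, so that $v_p(4c+a^2)\ges 2$ together with $p\ges 3$ gives $p^2\mid(c-af-f^2)$. A short computation using this divisibility shows that $I=\gen{x_1-2^{-1}ax_2,px_2}=\gen{y_1,y_2}$ is an ideal, and since $I\subseteq M$ its invariance reduces to $\varphi(y_1),\varphi(y_2)\in I$, which (using (\ref{eq1}), (\ref{eq2}), the divisibility $p^2\mid(c-af-f^2)$ and $v_p(a)=0$) reduces further to the two congruences $p\mid(F_{00}-1)F_{11}$ and $p\mid(F_{00}-1)F_{12}$. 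For $F_{00}=1$ both hold trivially; for $F_{00}\neq 1$ I would extract $p\mid(F_{00}-1)F_{12}$ from (\ref{eq3}) and, invoking $p^2\mid(c-af-f^2)$ once more, $p\mid(F_{00}-1)F_{11}$ from (\ref{eq4}).

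The routine part is all of the bracket and matrix arithmetic; the real obstacle is the analysis of the case $F_{00}\neq 1$ in items (\ref{lC7thenI3}) and (\ref{lC7thenI4}). There the desired conclusion follows from no single one of the identities (\ref{eq1})--(\ref{eq4}), but only from combining (\ref{eq3}) and (\ref{eq4}) while carefully tracking $p$-adic valuations; this is precisely where the arithmetic hypotheses ($2f+a\not\equiv_p 0$ in (\ref{lC7thenI3}); $v_p(4c+a^2)\ges 2$ and $v_p(a)=0$ in (\ref{lC7thenI4})) are consumed and make the valuation bookkeeping close.
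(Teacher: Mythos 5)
Your proposal is correct and follows essentially the same route as the paper: reduce invariance to divisibility conditions on the entries of $\ol{F}$ via Lemma \ref{linvsubm} and the explicit form of $\varphi$ on the generators of $I$, then settle the case $F_{00}\neq 1$ by combining equations (\ref{eq3}) and (\ref{eq4}). The only difference is cosmetic: where the paper invokes the vanishing of the determinant of the homogeneous system (\ref{eq3})--(\ref{eq4}) in $(F_{11},F_{12})$, you read the required congruences off the two equations directly, which works equally well (and in item (\ref{lC7thenI4}) does not even use $F_{12}\neq 0$).
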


\begin{proof}
\begin{enumerate}
\item 
From equation (\ref{eq2}) of Lemma \ref{lC7Feq}
it follows that $p|F_{22}$. Now the item 
follows from
item (\ref{linvsubm3}) of Lemma \ref{linvsubm}.
\item
Observe that $f\neq 0,-a$, since $c\neq 0$.
One checks directly that $[x_0,x_1+fx_2]$ is a 
$\bb{Z}_p$-multiple of $x_1+fx_2$, hence, $I=\gen{x_1+fx_2}$ is an ideal of $L$.
We have $\varphi(x_1+fx_2)=F_{11}(x_1+fx_2)+(F_{21}-fF_{11})x_2$.
 \begin{enumerate}
 \item 
Case $F_{00}=1$. 
From equation (\ref{eq1}) of Lemma \ref{lC7Feq}
we have $F_{21}=fF_{11}$, hence, $I$ is $\varphi$-invariant.
\item
Case $F_{00}\neq 1$. Since $F_{12}\in\bb{Z}_p^*$, 
equations (\ref{eq3}) and (\ref{eq4})
of Lemma \ref{lC7Feq} have a non-trivial solution in the variables
$F_{11}$ and $F_{12}$. It follows that 
$[f(1+F_{00})+aF_{00}][a+f(1+F_{00})]=0$.
  \begin{enumerate}
  \item[(i)]
Case $f(1+F_{00})+aF_{00}=0$. Hence, 
$$
F_{00} = -\frac{f}{a+f}, \qquad\qquad
a+f(1+F_{00})=a\frac{a+2f}{a+f}\neq 0. 
$$
Hence $F_{11} = 0$ (equation (\ref{eq4})), so that $F_{21}=0$
(equation (\ref{eq1})). Hence, $I$ is $\varphi$-invariant.
  \item[(ii)]
Case $a+f(1+F_{00})=0$. We show that we have a contradiction.
Indeed, $F_{00} = -\frac{a+f}{f}$, so that
$a[pF_{11}-(2f+a)F_{12}] = 0$ (equation (\ref{eq3})), 
and consequently $pF_{11}=(2f+a)F_{12}\in\bb{Z}_p^*$, 
which is a contradiction.
  \end{enumerate}
 \end{enumerate}
\item
By applying $[x_0,\cdot\,]$ to its generators, one sees that
$I=\gen{x_1-2^{-1}ax_2, px_2}$ is an ideal of $L$.
We have
(cf. item (\ref{lmetmor2}) of Lemma \ref{lmetmor})
$$
\left\{
\begin{array}{l}
\varphi(x_1-2^{-1}ax_2) = F_{11}(x_1-2^{-1}ax_2)+ (F_{21}+2^{-1}aF_{11})x_2\\[3pt]
\varphi(px_2) = F_{12}(x_1-2^{-1}ax_2)+ (F_{22}+2^{-1}aF_{12})x_2,
\end{array}
\right.
$$
from which we see that to show that
$I$ is $\varphi$-invariant is  equivalent to show
that $p|(F_{21}+2^{-1}aF_{11})$ and $p|(F_{22}+2^{-1}aF_{12})$.
We claim that, indeed, $p|(F_{21}+2^{-1}aF_{11})$ and $p|(F_{22}+2^{-1}aF_{12})$.
In fact, equations (\ref{eq1}) and (\ref{eq2}) of Lemma \ref{lC7Feq} are equivalent to
$$
\left\{
\begin{array}{l}
F_{21}+2^{-1}aF_{11} =
-(F_{00}-1)2^{-1}aF_{11}+F_{00}p^{-1}(c+4^{-1}a^2)F_{12}\\[3pt]
F_{22}+2^{-1}aF_{12} = pF_{00}F_{11} -(F_{00}-1)2^{-1}aF_{12}.
\end{array}
\right.
$$
 \begin{enumerate}
 \item
Case $F_{00}=1$. The claim is obviously true. 
 \item 
Case $F_{00}\neq 1$. 
Since $F_{12}\neq 0$,
equations (\ref{eq3}) and (\ref{eq4}) of Lemma \ref{lC7Feq} have a non-trivial solution in the variables
$F_{11}$ and $F_{12}$. Hence,
the determinant of the coefficient matrix has to be zero, which implies that
$$ a^2(F_{00}-1)^2 - (F_{00}+1)^2(4c+a^2) = 0.$$
It follows that $p|(F_{00}-1)$, so the claim is true.
 \end{enumerate}
\end{enumerate}
\end{proof}

\begin{corollary}\label{csimroot}
In the context of Lemma \ref{lC7Feq}, 
assume that $a\neq 0$ and that $f$ is a simple root modulo $p$ of the polynomial 
$P(\kappa)=\kappa^2+a\kappa-c$.
Then $\varphi$ is not simple.
\end{corollary}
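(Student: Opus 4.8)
The plan is to reduce the statement to Lemma \ref{lC7thenI}, whose first two parts already manufacture non-trivial $\varphi$-invariant ideals under hypotheses that nearly coincide with the present ones. The single discrepancy is that part (\ref{lC7thenI3}) of that lemma requires the \emph{exact} identity $c - af - f^2 = 0$, while the corollary only provides the congruence $c - af - f^2 \equiv_p 0$ (this is precisely the condition that $f$ be a root of $P(\kappa) = \kappa^2 + a\kappa - c$ modulo $p$). Thus the first task is to promote this congruence to an equality without disturbing $\varphi$ or $M$.

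To do so I would invoke Hensel's lemma: since $f$ is a \emph{simple} root modulo $p$, one has $P'(f) = 2f + a \not\equiv_p 0$, so there is a unique $\tilde f \in \bb{Z}_p$ with $\tilde f \equiv_p f$ and $P(\tilde f) = 0$, i.e. $c - a\tilde f - \tilde f^2 = 0$. The point is that, by Remark \ref{rindpsubm2}, the submodule $M = L^{(e,f)}$ depends on $f$ only modulo $p$; explicitly, writing $\tilde f = f + pg$ we have $x_1 + \tilde f x_2 = (x_1 + f x_2) + g\,(px_2)$, so $M = \gen{x_0 + ex_2,\, x_1 + \tilde f x_2,\, px_2}$. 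Hence I may switch to the basis of $M$ built from $\tilde f$ and rename $\tilde f$ as $f$, leaving $L$, $M$ and $\varphi$ untouched while securing $c - af - f^2 = 0$. The remaining hypotheses of Lemma \ref{lC7thenI}(\ref{lC7thenI3}) persist: $a \neq 0$ is unchanged, and $2f + a \not\equiv_p 0$ survives because $\tilde f \equiv_p f$.

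With this normalization the conclusion follows from a dichotomy on the entry $F_{12}$ of the matrix of $\varphi$ relative to the new basis. If $p \mid F_{12}$, then Lemma \ref{lC7thenI}(\ref{lC7thenI0}) exhibits $\gen{px_1, px_2}$ as a non-trivial $\varphi$-invariant ideal of $L$. If $p \nmid F_{12}$, then $F_{12} \in \bb{Z}_p^*$ and every hypothesis of Lemma \ref{lC7thenI}(\ref{lC7thenI3}) holds, so $\gen{x_1 + f x_2}$ is a non-trivial $\varphi$-invariant ideal. Either way $\varphi$ possesses a non-zero invariant ideal and is therefore not simple.

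I do not expect a genuinely hard step here; the one insight to be spotted is that the gap between \emph{root modulo $p$} and \emph{exact root} can be bridged at no cost precisely because the root is simple, and that this adjustment is harmless since $L^{(e,f)}$ is insensitive to $f$ modulo $p$. After that observation, the case analysis on $F_{12}$ is immediate from the two relevant parts of Lemma \ref{lC7thenI}.
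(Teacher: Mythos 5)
Your proposal is correct and follows essentially the same route as the paper's own proof: lift $f$ to an exact root of $P$ via Hensel's lemma (valid because the root is simple), observe that $M=L^{(e,f)}$ is unchanged since it depends on $f$ only modulo $p$, and then split on whether $p\mid F_{12}$, invoking Lemma \ref{lC7thenI}(\ref{lC7thenI0}) or \ref{lC7thenI}(\ref{lC7thenI3}) accordingly. No gaps.
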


\begin{proof}
From Hensel's Lemma it follows that there exists $\bar{f}\in\bb{Z}_p$ such that 
$\bar{f}\equiv_p f$ and $P(\bar{f})=0$. Clearly, $2\bar{f}+a\not\equiv_p 0$ and 
$M =\gen{x_0+ex_2, x_1+\bar{f}x_2, px_2}$. In other words, 
we can assume $c-af-f^2=0$ and $2f+a\not\equiv_p 0$.
In case $p|F_{12}$, $\gen{px_1,px_2}$ is a non-trivial $\varphi$-invariant ideal of $L$
by Lemma \ref{lC7thenI}(\ref{lC7thenI0}). 
In case $p\nmid F_{12}$,
$\gen{x_1+fx_2}$ 
is a non-trivial $\varphi$-invariant ideal of $L$ by
Lemma \ref{lC7thenI}(\ref{lC7thenI3}). Hence, $\varphi$ is not simple.
\end{proof}

\begin{proposition}\label{pl7nssp}
Let $s\in\bb{N}$ and $a,c\in\bb{Z}_p$ with $c\neq 0$.
Assume that one of the following conditions is satisfied:
\begin{enumerate}
\item \label{pl71} $v_p(a)\ges 1$ and $v_p(c)\ges 2$.
\item \label{pl72} $v_p(a) =0$ and $v_p(c)\ges 1$.
\item \label{pl73} $p\ges 3$, $a\neq 0$, $v_p(a)\ges 1$ and $v_p(c) = 0$.
\item \label{pl74} $a=0$, $v_p(c)  =0$ and $c$ is not a square modulo $p$.
\item \label{pl75} $v_p(a) = 0$, $v_p(c)=0$ and $v_p(4c+a^2)\neq 1$. 
\end{enumerate}
Then $L_{\ref*{C7}}(s,a,c)$ is not self-similar of index $p$. 
\end{proposition}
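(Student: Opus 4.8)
The plan is to show that for each of the five parameter regimes, every index-$p$ virtual endomorphism $\varphi:M\rar L$ fails to be simple, by exhibiting a non-trivial $\varphi$-invariant ideal. Since $L=L_{\ref*{C7}}(s,a,c)$ satisfies $\mr{dim}[L,L]=2$ (as $c\neq 0$ forces the matrix $A$ to have rank $2$), Lemma \ref{lmetmor}(\ref{lmetmor2}) gives $F_{01}=F_{02}=0$, and by Remark \ref{rindpsubm2} the subalgebra $M$ is of type $L^{()}$, $L^{(e)}$, or $L^{(e,f)}$. The first step is to dispose of the easy types uniformly: for $M=L^{()}$ one takes $I=\gen{x_1,x_2}=[x_0,\cdot\,]$-invariant directly, and for $M=L^{(e)}$ I would invoke Lemma \ref{lLe} (checking in each regime that one of its two hypotheses on the matrix entries $a,b,c,d$ of $A$ holds) to get the invariant ideal $\gen{px_1,px_2}$. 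This reduces matters to the genuinely interesting type $M=L^{(e,f)}$ with $f\not\equiv_p 0$, where Lemma \ref{lC7Feq} provides the four governing scalar equations \eqref{eq1}--\eqref{eq4}.

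For the type $M=L^{(e,f)}$, the strategy bifurcates according to the quadratic $P(\kappa)=\kappa^2+a\kappa-c$, whose reductions modulo $p$ control everything. In regimes where $a\neq 0$ and $P$ has a simple root modulo $p$, Corollary \ref{csimroot} applies immediately and produces a $\varphi$-invariant ideal. So the core of the argument is to verify that the hypotheses listed in (\ref{pl71})--(\ref{pl75}) either force such a simple root or fall into a degenerate situation handled separately. Concretely, in cases (\ref{pl71}) and (\ref{pl72}) one has $v_p(c)\ges 1$, so $P(\kappa)\equiv_p \kappa(\kappa+a)$; when $v_p(a)=0$ (case (\ref{pl72})) the two roots $0,-a$ are distinct modulo $p$ and hence simple, so Corollary \ref{csimroot} finishes it, while when $v_p(a)\ges 1$ (case (\ref{pl71})) the root $0$ is a double root and the corollary does not apply directly. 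Case (\ref{pl73}) is similar with $v_p(c)=0$ but $v_p(a)\ges 1$, giving $P(\kappa)\equiv_p \kappa^2-c$; case (\ref{pl74}) has $a=0$ and $c$ a non-square, so $P(\kappa)=\kappa^2-c$ is irreducible modulo $p$, meaning $M$ of type $L^{(e,f)}$ with the relevant root structure cannot interfere. Case (\ref{pl75}) is exactly the non-square-discriminant boundary case $v_p(4c+a^2)\neq 1$ after completing the square.

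The main obstacle will be the degenerate regimes where $P$ has a repeated root modulo $p$ — that is, where the discriminant $4c+a^2$ vanishes modulo $p$ — since there Corollary \ref{csimroot} is unavailable and one must argue directly from the equations of Lemma \ref{lC7Feq}. Here I expect to complete the square and change coordinates to $x_1-2^{-1}ax_2$ (permissible since $p\ges 3$ in the relevant subcases), reducing to the situation of Lemma \ref{lC7thenI}(\ref{lC7thenI4}), whose hypotheses $v_p(a)=0$ and $v_p(4c+a^2)\ges 2$ match case (\ref{pl75}) and provide the invariant ideal $\gen{x_1-2^{-1}ax_2,px_2}$; when instead $v_p(F_{12})\ges 1$ the cleaner Lemma \ref{lC7thenI}(\ref{lC7thenI0}) yields $\gen{px_1,px_2}$. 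The subcases with $v_p(a)\ges 1$ and a double root at $0$ (the leftover parts of (\ref{pl71}) and (\ref{pl73})) will require reading off divisibility of $F_{12}$ and $F_{22}$ from \eqref{eq1}--\eqref{eq2} to land in Lemma \ref{lC7thenI}(\ref{lC7thenI0}). Throughout, the delicate bookkeeping is tracking the valuations of the coefficients in \eqref{eq1}--\eqref{eq4} finely enough to force $p\mid F_{12}$ (or the appropriate combination) in every branch; this valuation case-analysis, rather than any single clever idea, is where the real work lies.
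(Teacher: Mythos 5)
Your overall architecture coincides with the paper's: the same reduction over the three submodule types of Remark \ref{rindpsubm2}, the same use of Lemma \ref{lLe} for type $L^{(e)}$ (its first hypothesis $v_p(b)=0$ holds in every regime, since the matrix of $L_{\ref*{C7}}(s,a,c)$ is $p^s\left[\begin{smallmatrix}a&1\\c&0\end{smallmatrix}\right]$, i.e.\ $b=1$), the reduction to $F_{12}\in\bb{Z}_p^*$ via Lemma \ref{lC7thenI}(\ref{lC7thenI0}), the observation that $f$ must then be a root of $P(\kappa)=\kappa^2+a\kappa-c$ modulo $p$, and the simple-root/double-root dichotomy handled by Corollary \ref{csimroot} and Lemma \ref{lC7thenI}(\ref{lC7thenI4}). (One slip: you restrict type $L^{(e,f)}$ to $f\not\equiv_p 0$, but $L^{(e,0)}$ is a genuine further case not subsumed by $L^{(e)}$, and it is precisely the case arising in the double-root-at-$0$ regimes; since your later paragraphs implicitly treat it, I read this as a typo.)

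The genuine gap is your plan for the leftover part of case (\ref{pl71}). There $v_p(a)\ges 1$ and $v_p(c)\ges 2$ force $f\equiv_p 0$, and you propose to read off $p\mid F_{12}$ from (\ref{eq1})--(\ref{eq2}) so as to land back in Lemma \ref{lC7thenI}(\ref{lC7thenI0}). This cannot work: taking $f=0$, the assignment $F_{00}=1$, $F_{11}=0$, $F_{12}=1$, $F_{21}=p^{-1}c$, $F_{22}=-a$ satisfies all four equations of Lemma \ref{lC7Feq} (note $p^{-1}c\in\bb{Z}_p$ precisely because $v_p(c)\ges 2$), so $F_{12}$ can be a unit, and then $\gen{px_1,px_2}$ is not $\varphi$-invariant since $\varphi(px_2)=x_1-ax_2$. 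The missing idea is that a \emph{different} ideal is required here: equations (\ref{eq1}) and (\ref{eq2}) do yield $p\mid F_{21}$ and $p\mid F_{22}$ (using $v_p(c-af-f^2)\ges 2$ and $v_p(a+f)\ges 1$), and $\gen{x_1,px_2}$ --- which is an ideal of $L$ exactly because $v_p(c)\ges 1$ --- is then $\varphi$-invariant by Lemma \ref{linvsubm}(\ref{linvsubm4}). Relatedly, there is no ``leftover part of (\ref{pl73})'': there $v_p(c)=0$, so $P$ cannot have a double root at $0$; either $c$ is a square modulo $p$ and every root of $P$ is simple (Corollary \ref{csimroot} applies, as $a\neq 0$), or $c$ is a non-square and $c-af-f^2\equiv_p 0$ is impossible, forcing $p\mid F_{12}$ after all. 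The remaining cases (\ref{pl72}), (\ref{pl74}), (\ref{pl75}) of your plan agree with the paper's proof.
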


\begin{proof}
Observe that $\mr{dim}[L,L]=2$. 
Denote $L=L_{\ref*{C7}}(s,a,c)$, and let 
$\varphi :M\rar L$ be a virtual endomorphism of $L$ of index $p$.
We will show that $\varphi$ is not simple by either applying a previously proven result,
or by exhibiting a $\varphi$-invariant ideal $I$ 
of $L$. Recall Remark \ref{rindpsubm2}.
If $M = \gen{px_0, x_1, x_2}$ then it suffices to take $I= \gen{x_1,x_2}$ 
(Corollary \ref{cFeqs}(\ref{cFeqs2})).
If $M = \gen{x_0+ex_1, px_1, x_2}$, where $e\in\bb{Z}_p$, 
then it suffices to take $I= \gen{px_1,px_2}$ (Lemma \ref{lLe}). Assume 
$M = \gen{x_0+ex_2, x_1+fx_2, px_2}$, where $e,f\in\bb{Z}_p$
(the last case to be treated). 
By Lemma \ref{lC7thenI}(\ref{lC7thenI0}),
we can assume $F_{12}\in\bb{Z}_p^*$. We observe that this implies that
$c-af-f^2\equiv_p 0$. We divide the proof in several cases,
depending on which assumption of the statement
holds.
\begin{enumerate}
\item 
Assume $v_p(a)\ges 1$ and $v_p(c)\ges 2$. 
Hence, $f\equiv_p 0$,
and it follows that $p|F_{21}$ and $p|F_{22}$. Thus, it suffices to take $I =\gen{x_1, px_2}$,
which is an ideal of $L$ (since, in particular, $v_p(c)\ges 1$)
and is $\varphi$-invariant by Lemma \ref{linvsubm}(\ref{linvsubm4}).
\item
Assume that $v_p(a) =0$ and $v_p(c)\ges 1$, or that 
$p\ges 3$, $a\neq 0$, $v_p(a)\ges 1$ and $v_p(c) = 0$.
Then $f$ is a simple root
of the polynomial $P(\kappa)=\kappa^2 + a\kappa -c$ modulo $p$.
Applying Corollary \ref{csimroot}, we see that $\varphi$ is not simple.
\item
Assume $a=0$, $v_p(c)  =0$ and $c$ is not a square modulo $p$.
This case contradicts $c-af-f^2\equiv_p 0$.
\item
Assume  $v_p(a) = 0$, $v_p(c)=0$ and $v_p(4c+a^2)\neq 1$. 
Case 1: $v_p(4c+a^2) = 0$.
Then $f$ is simple root
of the polynomial $P(\kappa)=\kappa^2 + a\kappa -c$ modulo $p$.
Applying Corollary \ref{csimroot}, we see that $\varphi$ is not simple.
Case 2: $v_p(4c+a^2) \ges 2$. Then $p\ges 3$, 
and $f\equiv_p -2^{-1}a$. We can assume $f=-2^{-1}a$.
From Lemma \ref{lC7thenI}(\ref{lC7thenI4}), we can take 
$I=\gen{x_1-2^{-1}ax_2, px_2}$.
\end{enumerate}
\end{proof}


\subsection{Self-similarity of 3-dimensional solvable Lie lattices}\label{smain3}

\begin{remark}\label{rclassif}
Assume $p\ges 3$.
Any 3-dimensional solvable $\bb{Z}_p$-Lie lattice is isomorphic to exactly one 
of the Lie lattices in the list below (see Definition \ref{dlielat} for 
the notation and \cite[Proposition 7.3]{GSKpsdimJGT} for the proof).
We also give necessary and sufficient conditions for the 
respective Lie lattice to be residually nilpotent (cf. \cite[page 731]{GSKpsdimJGT}).
For $p\ges 5$ the residually nilpotent Lie lattices in the list
provide a classification of 3-dimensional solvable torsion-free $p$-adic analytic
pro-$p$ groups (cf. \cite[Theorem B]{GSKpsdimJGT}).

\begin{enumerate}
\setcounter{enumi}{-1}
\item $L_{\ref*{C0}}$. It is residually nilpotent (abelian).
\item $L_{\ref*{C1}}(s)$. It is residually nilpotent if and only if $s\ges 1$.
\item $L_{\ref*{C2}}(s,r,c)$ with $r\ges 1$.
It is residually nilpotent if and only if $s\ges 1$.
\item $L_{\ref*{C3}}(s)$. It is residually nilpotent (nilpotent).
\item $L_{\ref*{C4}}(s,t,\vep)$.
It is residually nilpotent if and only if $s\ges 1$ or $t \ges 1$.
\item $L_{\ref*{C5}}(s,r,c)$.
It is residually nilpotent if and only if $s\ges 1$ holds, or 
$r\ges 1$ and $v_p(c)\ges 1$ hold.
\end{enumerate}
\end{remark}

Recall that the \textbf{self-similarity index} of 
a self-similar $\bb{Z}_p$-Lie lattice $L$ is the smallest power
of $p$, say $p^k$, such that $L$ is self-similar of index $p^k$.

\begin{theorem}\label{talgpodd}
Assume $p\ges 3$.
Let $L$ be a 3-dimensional solvable $\bb{Z}_p$-Lie lattice, and 
let $\sigma$ be the self-similarity index of $L$. Then $\sigma = p$ or
$\sigma = p^2$. Moreover, $\sigma =p$ if and only if $L$ is isomorphic 
to one of the Lie lattices that appear in the following sublist
of the list given in Remark \ref{rclassif}.
\begin{enumerate}
\setcounter{enumi}{-1} 
\item $L_{\ref*{C0}}$. 
\item $L_{\ref*{C1}}(s)$.
\item $L_{\ref*{C2}}(s,r,c)$  with $v_p(c)=1$ (and $r\ges 1$).
\refstepcounter{enumi}
\item $L_{\ref*{C4}}(s,t,\vep)$ with $t=1$, or with $t=0$ and $\vep =0$. 
\item $L_{\ref*{C5}}(s,r,c)$ with $r\ges 1$ and $v_p(c)=1$, or with 
$r=0$ and $v_p(4c+1) = 1$.
\end{enumerate}
\end{theorem}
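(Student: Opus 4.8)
The plan is to prove Theorem \ref{talgpodd} by combining the classification in Remark \ref{rclassif} with the case-by-case self-similarity and non-self-similarity results already established. First I would dispose of the index bound: by Remark \ref{rsol2di} every $3$-dimensional solvable $\bb{Z}_p$-Lie lattice admits a $2$-dimensional abelian ideal, so Corollary \ref{csigmap2} gives that $L$ is self-similar of index $p^{2}$. Hence $\sigma \in\{p,p^{2}\}$ always, and the entire content of the theorem reduces to deciding, for each isomorphism type in the list of Remark \ref{rclassif}, whether $\sigma = p$ (i.e.\ whether $L$ is self-similar of index $p$) or $\sigma = p^{2}$.

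Next I would run through the six families of Remark \ref{rclassif} one at a time, matching each against the established results. The affirmative (self-similar of index $p$) cases are handled by Lemma \ref{lssp}: family $L_{\ref*{C0}}$ and $L_{\ref*{C1}}(s)$ are the special case $a=0$ and $a=p^{s}$ of $L_{\ref*{C6}}(a)$; family $L_{\ref*{C2}}(s,r,c)$ with $v_p(c)=1$ is covered directly; and for family $L_{\ref*{C5}}$ I would first observe that $L_{\ref*{C5}}(s,r,c)=L_{\ref*{C7}}(s,p^{r},c)$, so the two listed subcases ($r\ges 1$ with $v_p(c)=1$, and $r=0$ with $v_p(4c+1)=1$) correspond exactly to the hypotheses $v_p(c)=1,\ v_p(a)\ges 1$ and $v_p(4c+a^{2})=1,\ v_p(a)=v_p(c)=0$ of Lemma \ref{lssp}(3). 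Family $L_{\ref*{C4}}(s,t,\vep)$ is subtler: it is the special case of $L_{\ref*{C7}}$ with vanishing $x_1$-coefficient in $[x_0,x_1]$, and the two listed subcases ($t=1$; or $t=0,\vep=0$) must be matched to $L_{\ref*{C7}}(s,0,1)$ of Lemma \ref{lssp}(4) after an appropriate rescaling and (for $t=0,\vep=0$) a change of basis swapping the roles of $x_1,x_2$; I would check that the excluded subcase $t=0,\vep=1$ falls under the non-self-similar regime instead.

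The negative direction is where the real work lives, and I would organize it by the invariant $\mr{dim}[L,L]$. If $\mr{dim}[L,L]=1$ then $L$ is one of the nilpotent-type lattices and Proposition \ref{pll1nssp} immediately gives that $L$ is not self-similar of index $p$, forcing $\sigma=p^{2}$. If $\mr{dim}[L,L]=2$ the lattices to exclude are $L_{\ref*{C2}}(s,r,c)$ with $v_p(c)\neq 1$, handled by Proposition \ref{pl2nssp}, and the remaining $L_{\ref*{C7}}$-type lattices covered by Proposition \ref{pl7nssp}. For the $L_{\ref*{C4}}$ and $L_{\ref*{C5}}$ families I would translate their parameters into $L_{\ref*{C7}}(s,a,c)$ coordinates and verify that every subcase not appearing in the theorem's sublist satisfies one of the five hypotheses of Proposition \ref{pl7nssp}: for instance $L_{\ref*{C4}}(s,t,\vep)$ with $t\ges 2$ gives $v_p(a)=v_p(c)=0$ with $v_p(4c+a^2)\neq 1$ via case (\ref{pl75}) after the basis swap, while $L_{\ref*{C5}}(s,r,c)$ with $r\ges 1,\ v_p(c)\neq 1$ falls under cases (\ref{pl71}) or (\ref{pl72}), and $r=0,\ v_p(4c+1)\neq 1$ under (\ref{pl74}) or (\ref{pl75}).

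I expect the main obstacle to be bookkeeping rather than a single deep step: the families $L_{\ref*{C4}}$ and $L_{\ref*{C5}}$ are \emph{not} literally in $L_{\ref*{C7}}$-presentation, so each must be put into a good basis, rescaled, and (when the leading nonzero bracket lies on $x_2$ rather than $x_1$) reflected, and one must confirm that these normalizations carry each parameter regime to precisely one hypothesis of Lemma \ref{lssp} or Proposition \ref{pl7nssp} with no gaps and no overlaps. In particular I would pay close attention to the boundary value $v_p(4c+a^{2})=1$, which is exactly the dividing line between self-similarity of index $p$ and index $p^{2}$ in family $L_{\ref*{C7}}$, and confirm that the sublist in the theorem reproduces this threshold correctly for $L_{\ref*{C5}}$. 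Since the affirmative and negative cases between them exhaust all isomorphism types of Remark \ref{rclassif} and are mutually exclusive, assembling them establishes both the dichotomy $\sigma\in\{p,p^2\}$ and the exact characterization of when $\sigma=p$.
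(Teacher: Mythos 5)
Your proposal follows exactly the paper's route: Corollary \ref{csigmap2} gives $\sigma\in\{p,p^2\}$; the identities $L_{\ref*{C0}}=L_{\ref*{C6}}(0)$, $L_{\ref*{C1}}(s)=L_{\ref*{C6}}(p^s)$, $L_{\ref*{C4}}(s,t,\vep)=L_{\ref*{C7}}(s,0,p^t\rho^\vep)$ and $L_{\ref*{C5}}(s,r,c)=L_{\ref*{C7}}(s,p^r,c)$ reduce the positive cases to Lemma \ref{lssp} and the negative ones to Propositions \ref{pll1nssp}, \ref{pl2nssp} and \ref{pl7nssp}, which is precisely how the paper argues. The outline is sound, but several of your concrete case-assignments are off and would need to be corrected in the write-up. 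First, $L_{\ref*{C4}}(s,t,\vep)$ is \emph{literally} $L_{\ref*{C7}}(s,0,p^t\rho^\vep)$; no rescaling or swap of $x_1,x_2$ is needed (and a swap does not return an $L_{\ref*{C7}}$-presentation with integral parameters when $t\ges 1$). Consequently $t=1$ is covered by the first alternative of Lemma \ref{lssp}(3) (here $a=0$, so $v_p(a)\ges 1$ and $v_p(c)=1$), not by item (4) after rescaling, and $t\ges 2$ falls under hypothesis (\ref{pl71}) of Proposition \ref{pl7nssp} ($v_p(a)\ges 1$, $v_p(c)\ges 2$), not (\ref{pl75}). Second, for $L_{\ref*{C5}}$ one has $a=p^r$: the regime $r\ges 1$, $v_p(c)=0$ is hypothesis (\ref{pl73}) rather than (\ref{pl72}), and $r=0$, $v_p(c)\ges 1$ is hypothesis (\ref{pl72}) rather than (\ref{pl74}) (which requires $a=0$). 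Third, Proposition \ref{pl7nssp} assumes $c\neq 0$, so the subcases of $L_{\ref*{C5}}$ with $c=0$ must be routed through your $\mr{dim}[L,L]=1$ stratum and Proposition \ref{pll1nssp}; this works, but those lattices need not be nilpotent, so that stratum is larger than ``the nilpotent-type lattices''. With these corrections every parameter regime of Remark \ref{rclassif} lands on exactly one applicable hypothesis and the proof closes as you describe.
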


\begin{proof}
By Corollary \ref{csigmap2}, $\sigma = p$ or $\sigma=p^2$. 
Observe that $L_{\ref*{C0}}=L_{\ref*{C6}}(0)$, 
$L_{\ref*{C1}}(s)=L_{\ref*{C6}}(p^s)$,
$L_{\ref*{C4}}(s,t,\vep)=L_{\ref*{C7}}(s,0,p^t\rho^\vep)$ and
$L_{\ref*{C5}}(s,r,c)=L_{\ref*{C7}}(s, p^r, c)$.
The claim that the Lie lattices in the 
statement are self-similar of index $p$ follows from Lemma \ref{lssp}.
The remaining Lie lattices of Remark \ref{rclassif} (the ones not
in the statement) are not self-similar of index $p$
by Propositions \ref{pll1nssp}, \ref{pl2nssp} and \ref{pl7nssp}.
\end{proof}


\subsection{Non-self-similarity results in higher dimension}\label{snssresh}

The main results of this section are Proposition \ref{phdn} and 
Corollary \ref{cnsspsub}; the latter 
is a key ingredient in the proof of Theorem \ref{tshssp}.

Let $d\ges 2$ be an integer.
As in Section \ref{smet}, Greek indices will take values
in $\{0,...,d\}$, while Latin indices will take values in $\{1,...,d\}$.
We denote the $p$-adic valuation 
by $v$ instead of $v_p$.

\begin{definition}\label{dLab}
Let $a=(a_1,...,a_d)\in\bb{Z}_p^d$ and $b=(b_1,...,b_{d-1})\in\bb{Z}_p^{d-1}$.
We define an antisymmetric $(d+1)$-dimensional $\bb{Z}_p$-algebra $L(a,b)$
as follows. As a $\bb{Z}_p$-module, $L=\bb{Z}_p^{d+1}$. Denoting by
$(x_0,...,x_d)$ the canonical basis of $L$,  
the bracket of $L(a,b)$ is induced by
the commutation relations
$$
\left\{
\begin{array}{ll}
[x_i,x_j]=0&\\[2pt]
[x_0,x_1]=\sum_i a_ix_i&\\[2pt]
[x_0,x_{i+1}]= b_ix_i&\mbox{ if }\,\,i<d.
\end{array}
\right.
$$
\end{definition}

\begin{remark}
$L(a,b)$ is a metabelian (possibly abelian) Lie lattice.
\end{remark}

We will prove the following proposition at the end of the section.

\begin{proposition}\label{phdn}
Let $a\in\bb{Z}_p^d$ and $b\in\bb{Z}_p^{d-1}$ be as in Definition \ref{dLab}. 
Assume that:
\begin{enumerate}
\item \label{phdn1}
$a_d\neq 0$.
\item \label{phdn2}
$v(b_i)<v(b_{i+1})$ whenever $i<d-1$.
\item \label{phdn3}
$v(b_i)<v(a_i)$ whenever $i<d$.
\item \label{phdn4}
$v(b_{d-1})+1 <v(a_d)$.
\end{enumerate}
Then $L(a,b)$ is not self similar of index $p$.
\end{proposition}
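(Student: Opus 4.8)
The plan is to show that no virtual endomorphism $\varphi:M\ra L$ of $L:=L(a,b)$ of index $p$ can be simple, by exhibiting in each case a nonzero $\varphi$-invariant ideal contained in $M$. First some preliminary observations. Conditions (\ref{phdn2})--(\ref{phdn4}) force every $b_i$ to be nonzero (finite valuation), and $a_d\neq 0$ by (\ref{phdn1}); hence the matrix $A$ of $L$ relative to the good basis $(x_0,\ldots,x_d)$ has $\det A=\pm a_d b_1\cdots b_{d-1}\neq 0$, so $\mr{dim}[L,L]=d$. Put $V:=\gen{x_1,\ldots,x_d}$, the given $d$-dimensional abelian ideal. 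If $V\subseteq M$ then $V$ is a nonzero $\varphi$-invariant ideal by Corollary~\ref{cFeqs}(\ref{cFeqs2}), and $\varphi$ is not simple. So I may assume $V\not\subseteq M$, in which case $M\cap V$ has index $p$ in $V$ and, as in the discussion preceding Lemma~\ref{lsmetB}, I choose a good basis $\bm{y}=(y_0,\ldots,y_d)$ of $M$ with $\gen{y_1,\ldots,y_d}=M\cap V$ and $U_{00}\in\bb{Z}_p^*$; rescaling $y_0$, I take $U_{00}=1$. Since $\mr{dim}[M,M]=\mr{dim}[L,L]=d$, Lemma~\ref{lmetmor}(\ref{lmetmor2}) yields $F_{0i}=0$ for all $i$, and Corollary~\ref{cFeqs}(\ref{cFeqs1}) yields the matrix equation $FB=F_{00}AF$.

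Next I reformulate this equation. Let $\psi:M\cap V\ra V$ be the restriction of $\varphi$; because $F_{0i}=0$ it is well defined, with matrix the $d\times d$ block $F$. Substituting $B=U^{-1}AU$ (Lemma~\ref{lsmetB}, using $U_{00}=1$) into $FB=F_{00}AF$ and cancelling $U$ on the right gives the intertwining relation $\Psi A=\mu A\Psi$, where $\Psi:=FU^{-1}$ is the $\bb{Q}_p$-linear extension of $\psi$ and $\mu:=F_{00}\in\bb{Z}_p$. Finding a nonzero $\varphi$-invariant ideal inside $V$ now amounts to producing a nonzero sublattice $I\subseteq M\cap V$ that is simultaneously $A$-invariant (so that it is an ideal of $L$) and $\Psi$-invariant (so that $\psi(I)\subseteq I$). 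The relation $\Psi A=\mu A\Psi$ is the organizing identity: it makes $\ker\Psi$ and $\mr{im}\,\Psi$ both $A$-invariant and shows $\Psi(E_\lambda)\subseteq E_{\mu^{-1}\lambda}$ for eigenspaces of $A$, so that a nonzero $\Psi$ forces $\mu$ to be a ratio of eigenvalues of $A$.

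The valuation hypotheses are tailored to pin down this invariant ideal. Since $v(a_i)>v(b_i)$ for $i<d$, each term $a_ix_i$ with $i<d$ is absorbed by $b_ix_i$, whence $[L,L]=AV=\gen{b_1x_1,\ldots,b_{d-1}x_{d-1},a_dx_d}$. The prototype invariant ideal is $I_0:=\gen{x_1,\ldots,x_{d-1},px_d}$, which is an ideal exactly because (\ref{phdn4}) forces $v(a_d)\ges 2$, hence $p\mid a_d$ and $Ax_1=\sum_i a_ix_i\in I_0$. When $M\cap V=I_0$, reading $FB=F_{00}AF$ along the strict chain $v(b_1)<\cdots<v(b_{d-1})<v(a_d)-1$ forces the last row and column of $F$ to be divisible by $p$, so $\psi(I_0)\subseteq I_0$ and $I_0$ is $\varphi$-invariant. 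For the remaining index-$p$ sublattices $M\cap V$ of $V$ (parametrized, as in Remark~\ref{rindpsubm2}, by which coordinate drops modulo $p$ together with unipotent parameters) the same scalar equations, processed degree by degree along the valuation chain, again force the relevant block of $F$ to be divisible by $p$; the $\varphi$-invariant ideal is then the matching member of the family $\gen{px_1,\ldots,px_d}$, $\gen{x_1,\ldots,x_{d-1},px_d}$ and their relatives, via a higher-dimensional analogue of Lemma~\ref{linvsubm}.

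The main obstacle is precisely this valuation bookkeeping across all hyperplane types. Because $A$ is a cyclic (companion-type) operator, with cyclic vector $x_d$, it admits no coordinate subspace as an invariant subspace, so the invariant ideal cannot be read off the matrix directly and must be assembled from the exact valuations of the $b_i$ and of $a_d$; tracking how the strict inequalities (\ref{phdn2})--(\ref{phdn4}) propagate through the equations equivalent to $FB=F_{00}AF$, and guaranteeing in every case that some block of $F$ lands in $p\bb{Z}_p$, is the technical core. For $d=2$ this specializes to the hardest case $v(a)\ges 1$, $v(c)\ges 2$ of Proposition~\ref{pl7nssp}, which anchors the general argument. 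I expect the cleanest write-up to isolate an invariance lemma extending Lemma~\ref{linvsubm} and then dispatch the hyperplane cases by induction along the valuation chain.
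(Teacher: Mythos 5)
Your overall strategy coincides with the paper's: dispose of the case $\gen{x_1,\dots,x_d}\subseteq M$ via Corollary \ref{cFeqs}(\ref{cFeqs2}), then run through the index-$p$ sublattices $M\cap\gen{x_1,\dots,x_d}$ (the paper uses the basis shapes of Remark \ref{rmbases}), extract $F_{0i}=0$ and the matrix equation $FB=F_{00}AF$ from Lemma \ref{lmetmor} and Corollary \ref{cFeqs}, and show that one of the ideals $I_i=\gen{x_1,\dots,x_{i-1},px_i,\dots,px_d}$ is $\varphi$-invariant by proving $p$-divisibility of a suitable block of $F$. The intertwining reformulation $\Psi A=\mu A\Psi$ is a pleasant repackaging but does no independent work in your argument.

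The gap is that the step carrying the entire weight of hypotheses (\ref{phdn1})--(\ref{phdn4}) --- the analogue of the paper's Lemma \ref{lpFij}, which derives the required divisibilities from the scalar equations $(F\widehat{U}^TA)_{ik}=(F_{00}AF\widehat{U}^T)_{ik}$ --- is asserted ("processed degree by degree along the valuation chain, again force the relevant block of $F$ to be divisible by $p$") rather than proved, and it is not a routine verification. Already in the case you single out, $M\cap V=\gen{x_1,\dots,x_{d-1},px_d}$ (the paper's case $i_0=d$), divisibility of the last row of $F$ cannot be obtained entry by entry: one first needs the \emph{quantitative} estimates $v(F_{dj})\ges v(a_d)-v(b_j)$ for $j<d-1$ and $v(F_{d,d-1})\ges v(a_d)-v(b_{d-1})-1$, and only then does the first-column equation $\sum_{j<d}pa_jF_{dj}+a_dF_{dd}=F_{00}pa_dF_{11}$ yield $p\mid F_{dd}$, because each term $pa_jF_{dj}$ has valuation strictly exceeding $v(a_d)$ --- and this is exactly where the ``$+1$'' in hypothesis (\ref{phdn4}) is consumed (mere $p\mid F_{dj}$ would not suffice). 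The other basis shapes require descending inductions on the row or column index with similar bookkeeping. Also, two small imprecisions: for $M\cap V=\gen{x_1,\dots,x_{d-1},px_d}$ only the last \emph{row} of $F$ need be divisible by $p$ (your "last row and column" claims more than is needed, and more than the equations give); and your base case $d=2$ does not literally exist here since the proposition assumes $d\ges 2$ but the lattices $L(a,b)$ are $(d+1)$-dimensional, so the anchor is the 3-dimensional case, not a 2-dimensional one.
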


\vspace{0mm}

\begin{corollary}\label{cnsspsub}
Let $a\in\bb{Z}_p^d$ and $b\in\bb{Z}_p^{d-1}$ be as in Definition \ref{dLab}. 
Assume that $a_d\neq 0$ and $b_1=...=b_{d-1}=1$.
Then $L(a,b)$ admits a finite-index subalgebra that is not self-similar of index $p$.
\end{corollary}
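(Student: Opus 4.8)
The plan is to produce, inside $L(a,b)$, a finite-index subalgebra isomorphic to some $L(a',b')$ whose parameters $(a',b')$ satisfy all four hypotheses of Proposition \ref{phdn}; non-self-similarity of index $p$ of that subalgebra then follows immediately from the proposition. The subalgebras I will test are the diagonal scalings $N=\gen{p^{c_0}x_0, p^{c_1}x_1,\ldots,p^{c_d}x_d}$ for exponents $c_0,\ldots,c_d\in\bb{N}$; each such $N$ is a free $\bb{Z}_p$-module of rank $d+1$ and has finite index $p^{c_0+\cdots+c_d}$ in $L(a,b)$.

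First I would compute the bracket of $N$ in the basis $y_0:=p^{c_0}x_0$, $w_i:=p^{c_i}x_i$. Since $b_1=\cdots=b_{d-1}=1$, one finds $[w_i,w_j]=0$, $[y_0,w_1]=\sum_i a'_i w_i$ with $a'_i=p^{c_0+c_1-c_i}a_i$, and $[y_0,w_{i+1}]=b'_i w_i$ with $b'_i=p^{c_0+c_{i+1}-c_i}$. Thus $N$ is a $\bb{Z}_p$-subalgebra exactly when all these exponents are nonnegative, and in that case $N\cong L(a',b')$ in the sense of Definition \ref{dLab}. Writing $v(a'_i)=c_0+c_1-c_i+v(a_i)$ and $v(b'_i)=c_0+c_{i+1}-c_i$, hypothesis (\ref{phdn1}) is automatic from $a_d\neq0$, while after cancelling $c_0$ the remaining hypotheses become: (\ref{phdn2}) the sequence $(c_i)$ is strictly convex; (\ref{phdn3}) $c_{i+1}-c_1<v(a_i)$ for $i<d$; and (\ref{phdn4}) $2c_d-c_{d-1}-c_1+1<v(a_d)$.

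The delicate point is reconciling these requirements with integrality: condition (\ref{phdn2}) forces the successive differences $c_{i+1}-c_i$ to increase strictly, conditions (\ref{phdn3}) and (\ref{phdn4}) force the $c_i$ to decrease relative to $c_1$ (so that the $a'_i$ pick up high valuation), yet a subalgebra demands $c_i\ges0$ and $c_0+c_{i+1}-c_i\ges0$. This is exactly where scaling $x_0$ by the factor $p^{c_0}$ is essential: it raises every valuation uniformly by $c_0$ without changing any of the differences that control (\ref{phdn2})--(\ref{phdn4}), so it lets me realize a decreasing, strictly convex exponent sequence while staying inside $\bb{Z}_p$. Conceptually, $a'_1=p^{c_0}a_1$ is the trace of $p^{c_0}\,\mr{ad}(y_0)$ restricted to the ideal, hence cannot be enlarged by any choice of sublattice of the ideal alone; the global factor $p^{c_0}$ is what makes room.

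Concretely I would take $c_i=\binom{d-i+1}{2}$ for $1\les i\les d$, so that $c_d=0$, the sequence is strictly decreasing and strictly convex with $c_{i+1}-c_i=-(d-i)$, and then set $c_0:=d-1$. Then $v(b'_i)=c_0-(d-i)=i-1\ges0$ is strictly increasing, giving (\ref{phdn2}); the partial sums $c_{i+1}-c_1=-\sum_{j\les i}(d-j)$ are negative for $i<d$, so $c_{i+1}-c_1<0\les v(a_i)$ gives (\ref{phdn3}); and (\ref{phdn4}) reduces to $0<\binom{d}{2}+v(a_d)$, true for $d\ges2$. Since moreover $v(a'_i)\ges c_0>0$ for every $i$, the module $N$ is genuinely a subalgebra with $N\cong L(a',b')$ meeting all hypotheses of Proposition \ref{phdn}, so this finite-index subalgebra of $L(a,b)$ is not self-similar of index $p$, as required. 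The main obstacle in carrying this out is the bookkeeping of the valuations under the staircase scaling, together with the realization that $x_0$ itself must be scaled.
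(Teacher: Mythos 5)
Your proposal is correct and follows essentially the same route as the paper: both rescale the basis diagonally, $M=\gen{p^{c_0}x_0,\dots,p^{c_d}x_d}$, with a strictly convex "staircase" of exponents (quadratic in $i$) so that the rescaled lattice $L(a',b')$ has $v(b'_i)=i-1$ and all $v(a'_i)$ pushed high enough to meet the hypotheses of Proposition \ref{phdn}. Your explicit choice $c_i=\binom{d-i+1}{2}$, $c_0=d-1$ is just a concrete instance of the paper's parametrized family $k_i=k_1-(i-1)k_0+\frac{(i-1)(i-2)}{2}$ with $k_0>\frac{d-1}{2}$, and all your verifications check out.
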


\begin{proof}
Let $L=L(a,b)$ and
take $k_0,...,k_d\in\bb{N}$ as follows. Choose 
$$
k_0 > \frac{d-1}{2},\qquad\qquad
k_1\ges \mr{max}\left( (i-1)k_0-\frac{(i-1)(i-2)}{2}\right)_{i=1,...,d}
$$
and, for $i\ges 2$, define 
$$
k_i = k_1-(i-1)k_0+\frac{(i-1)(i-2)}{2}.
$$
It is not difficult to show that $k_0+k_1-k_i>i-1$ for all $i$.
Define $M=\gen{p^{k_0}x_0,...,p^{k_d}x_d}$. Then $M$ is a finite-index subalgebra
of $L$ which is isomorphic to $L(a',b')$, where
$$
\left\{
\begin{array}{ll}
b'_i = p^{i-1}&\mbox{ if }\,\,i<d\\[2pt]
a'_i = p^{k_0+k_1-k_i}a_i.&
\end{array}
\right.
$$
By Proposition \ref{phdn}, $M$ is not self-similar of index $p$. 
\end{proof}

\vspace{5mm}

The remainder of the section is devoted to the proof 
of Proposition \ref{phdn}.

\begin{remark}\label{rlabid}
Let $a\in\bb{Z}_p^d$, $b\in\bb{Z}_p^{d-1}$ and $L=L(a,b)$ (Definition \ref{dLab}). 
We define $I_0=\gen{x_1,...,x_d}$ and 
$I_i=\gen{x_1,...,x_{i-1},px_i,...,px_d}$. Hence,
$I_1=pI_0$ and $I_1\subset I_2\subset ...\subset I_d\subset I_0$.
Moreover, $I_0$ and $I_1$ are ideals of $L$.
\end{remark}

\begin{lemma}\label{llabid}
Let $i>1$ and $I_i$ be defined as in Remark \ref{rlabid}.
Then $I_i$ is an ideal of $L$ if and only if
$p|a_j$ for all $j\ges i$.
\end{lemma}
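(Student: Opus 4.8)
The plan is to use that $I_i$ sits inside the abelian ideal $I_0=\gen{x_1,\ldots,x_d}$, so that the only brackets which can fail to land back in $I_i$ are those involving $x_0$. First I would record that, since $[x_l,x_m]=0$ for all $l,m\ges 1$, one has $[x_l,I_i]=\{0\}$ for every $l\ges 1$; hence $I_i$ is an ideal of $L$ if and only if $[x_0,I_i]\subseteq I_i$. As $[x_0,\cdot\,]$ is $\bb{Z}_p$-linear, it suffices to verify this containment on the module generators $x_1,\ldots,x_{i-1},px_i,\ldots,px_d$ of $I_i$ (recall $i>1$, so $x_1$ is always among them).

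Next I would make explicit the membership criterion for $I_i$: an element $\sum_l c_l x_l$ lies in $I_i$ precisely when $c_l\in p\bb{Z}_p$ for all $l\ges i$, the coefficients with $l<i$ being unconstrained. With this in hand I would compute $[x_0,\cdot\,]$ on each generator using the relations of Definition \ref{dLab}. For a generator $x_j$ with $2\les j\les i-1$ one gets $[x_0,x_j]=b_{j-1}x_{j-1}$, a multiple of $x_{j-1}$ with $j-1<i$, which lies in $I_i$ automatically. For a generator $px_j$ with $i\les j\les d$ one gets $[x_0,px_j]=p\,b_{j-1}x_{j-1}$, which lies in $I_i$ automatically because of the factor $p$. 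Thus all generators except $x_1$ impose no condition.

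Finally, the single remaining generator is $x_1$, for which $[x_0,x_1]=\sum_l a_l x_l$. By the membership criterion this lies in $I_i$ if and only if $a_l\in p\bb{Z}_p$ for all $l\ges i$, i.e.\ $p\mid a_j$ for all $j\ges i$, which is exactly the asserted condition. I do not expect a genuine obstacle here: the only point requiring care is the bookkeeping of indices in the relation $[x_0,x_j]=b_{j-1}x_{j-1}$ (the index shift built into Definition \ref{dLab}) and checking that, for $j\ges i$, the resulting index $j-1$ either stays below $i$ or is compensated by the factor $p$, so that $x_1$ is truly the only generator that can obstruct $I_i$ from being an ideal.
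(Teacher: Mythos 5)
Your proof is correct and follows essentially the same route as the paper's, which likewise reduces the ideal condition to checking $[x_0,y]\in I_i$ on the displayed module generators of $I_i$ (using that $I_i\subseteq I_0$ and $I_0$ is abelian). You simply carry out the generator-by-generator computation that the paper leaves implicit, correctly identifying $x_1$ as the only generator that imposes a condition.
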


\begin{proof}
It suffices to 
observe that $I_i$ is an ideal of $L$ if and only if $[x_0,y]\in I_i$ for all 
the generators $y$ of $I_i$ displayed in the definition of $I_i$. 
\end{proof}

\begin{lemma}\label{lpFij}
Let $a_1,...,a_d,b_1,...,b_{d-1}\in\bb{Z}_p$ and define $A=(A_{ij})\in gl_d(\bb{Z}_p)$
by 
$$
A_{ij} = 
\left\{
\begin{array}{ll}
a_i&\mbox{ if }\,\,j=1\\[2pt]
b_i&\mbox{ if }\,\,j=i+1\\[2pt]
0&\mbox{ if }\,\,j>1\mbox{ and }j\neq i+1.
\end{array}
\right.
$$
Let $i_0\in\{1,...,d\}$ and $f_1,...,f_{i_0-1}\in\bb{Z}_p$ 
(no choice of coefficients `$f$' has to be made when $i_0=1$). 
Define $U=(U_{ij})\in gl_{d}(\bb{Z}_p)$ by
$$
U_{ij} = 
\left\{
\begin{array}{rl}
p&\mbox{ if }\,\,i=j=i_0\\[2pt]
1&\mbox{ if }\,\, i=j\neq i_0\\[2pt]
-f_j&\mbox{ if }\,\, i>j \mbox{ and }i=i_0\\[2pt]
0&\mbox{ if }\,\, i>j \mbox{ and } i\neq i_0\\[2pt]
0&\mbox{ if }\,\, i<j.
\end{array}
\right.
$$
Let $\widehat{U}$ be the cofactor matrix of $U$. 
Let $F_{00}\in\bb{Z}_p$
and $F=(F_{ij})\in gl_d(\bb{Z}_p)$. 
Assume that
$F\widehat{U}^TA= F_{00}A F\widehat{U}^T$,
and that the $a_i$'s and $b_j$'s satisfy the four assumptions in
the statement of Proposition \ref{phdn}.
Then the following holds.
\begin{enumerate}
\item \label{lpFij2} 
Assume that $f_k= 0$ for all $k<i_0$. Then
$p|F_{ij}$ for $i\ges i_0$ and $j\les i_0$.
\item \label{lpFij1}
Assume that there exists $k_0<i_0$ such that $f_{k_0}\not\equiv_p 0$
and $f_k = 0$ for all $k<k_0$. 
Then
$p|F_{i,i_0}$ for $i\ges k_0$, and $p|F_{ij}$ for $i\ges k_0$ and $j<k_0$.
\end{enumerate}
\end{lemma}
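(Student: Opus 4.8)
The plan is to turn the matrix identity into scalar equations and then push $p$-divisibility around by a careful valuation count. First I would make $\widehat U^T$ explicit. As $U$ is lower triangular with $\det U=p$, the adjugate relation $U\widehat U^T=pI$ identifies $\widehat U^T$ as the matrix whose diagonal equals $p$ except for a $1$ at $(i_0,i_0)$, whose row $i_0$ carries the extra entries $f_j$ in the columns $j<i_0$, and which vanishes elsewhere. Setting $P:=F\widehat U^T$, the hypothesis becomes $PA=F_{00}AP$. Since $A$ is supported on its first column (the $a_i$) and its superdiagonal ($A_{i,i+1}=b_i$), the $j$-th column of this identity reads, for $j\ges 2$ and every $i$,
$$b_{j-1}P_{i,j-1}=F_{00}\bigl(a_iP_{1j}+b_iP_{i+1,j}\bigr)\qquad(\star_{ij}),$$
the term $b_iP_{i+1,j}$ being dropped when $i=d$, while the first column gives $\sum_k a_kP_{ik}=F_{00}(a_iP_{11}+b_iP_{i+1,1})$. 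From the shape of $\widehat U^T$ I would also record how $P$ encodes $F$: one has $P_{i,i_0}=F_{i,i_0}$ always, $P_{ij}=pF_{ij}$ for every $j\neq i_0$ in case (\ref{lpFij2}) and for $j<k_0$ or $j>i_0$ in case (\ref{lpFij1}), and $P_{i,k_0}=pF_{i,k_0}+f_{k_0}F_{i,i_0}$ in case (\ref{lpFij1}); in particular each assertion ``$p\mid F_{ij}$'' translates into a lower bound on $v(P_{ij})$.

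The engine is a \emph{free} sub-diagonal estimate: $v(P_{il})\ges 1$ whenever $i>l$, with no hypothesis on the $f_j$. It comes from feeding $(\star_{i,l+1})$ with the trivial bounds $v(P_{\cdot,l+1})\ges 0$: for $i<d$ the $b_i$-term dominates the $a_i$-term by (\ref{phdn3}), giving $v(b_l)+v(P_{il})\ges v(b_i)$, whence $v(P_{il})\ges v(b_i)-v(b_l)\ges 1$ by (\ref{phdn2}); for $i=d$ one gets $v(P_{dl})\ges v(a_d)-v(b_l)\ges 2$ by (\ref{phdn4}). Granting this, case (\ref{lpFij2}) is immediate off the diagonal: for $i>i_0$ the estimate already yields $v(P_{i,i_0})\ges 1$, and for $i\ges i_0$, $j<i_0$, feeding $(\star_{i,j+1})$ with column $j+1$ now $p$-divisible (when $j+1\neq i_0$) or controlled only through $v(P_{1,i_0})\ges 0$ (when $j+1=i_0$, the gap (\ref{phdn3}) absorbing the uncontrolled term) produces the extra power $v(P_{ij})\ges 2$. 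The diagonal value $v(P_{i_0,i_0})\ges 1$ follows directly from $(\star_{i_0,i_0+1})$ if $i_0<d$, since column $i_0+1$ is $p$-divisible; if $i_0=d$ I would instead use the first-column equation together with the sharp bounds on the $P_{dk}$, $k<d$, already furnished by $(\star_{d,k+1})$.

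For case (\ref{lpFij1}) I would run the same machine but pivot on column $k_0$. Because $f_{k_0}$ is a unit and $P_{i,k_0}\equiv f_{k_0}F_{i,i_0}\pmod p$, the target $p\mid F_{i,i_0}$ is equivalent to $v(P_{i,k_0})\ges 1$, which for $i>k_0$ is exactly the free sub-diagonal estimate. The one remaining row is the pivot row $i=k_0$, i.e.\ the diagonal bound $v(P_{k_0,k_0})\ges 1$; I would obtain it by propagating divisibility up the diagonal, starting from $v(P_{i_0,i_0})\ges 1$ (a consequence of $v(P_{i_0,k_0})\ges 1$ via the unit relation) and using $(\star_{i,i+1})$ to pass from $v(P_{i+1,i+1})\ges 1$ to $v(P_{i,i})\ges 1$ for $i=i_0-1,\dots,k_0$, the $b_i$-term again dominating the uncontrolled $P_{1,i+1}$ by (\ref{phdn3}). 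Finally the bounds $p\mid F_{ij}$ for $j<k_0$, where $P_{ij}=pF_{ij}$, follow by the $(\star_{i,j+1})$ argument of case (\ref{lpFij2}), the columns below $k_0$ being $p$-divisible and column $k_0$ already controlled.

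The delicate part is not a single idea but the boundary bookkeeping: the diagonal and pivot-row entries are precisely those the free sub-diagonal estimate misses, and the arguments that reach them must survive the last row $i=d$, where the superdiagonal term $b_iP_{i+1,j}$ disappears, and the columns adjacent to the pivot, where a row-$1$ entry $P_{1,\cdot}$ is a priori uncontrolled. In each such case the required power of $p$ has to be squeezed out of the strict inequalities (\ref{phdn2})--(\ref{phdn4}) alone, with (\ref{phdn4}), $v(b_{d-1})+1<v(a_d)$, carrying every estimate that touches row or column $d$; checking that these inequalities always leave exactly enough room is the crux of the proof.
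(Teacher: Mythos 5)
Your proposal is correct and takes essentially the same route as the paper's proof: both make $\widehat{U}^T$ explicit via $U\widehat{U}^T=pI$, reduce the matrix identity to the scalar equations you call $(\star_{ik})$ (the paper's $F(i,k)$), and extract the required divisibilities from the valuation gaps in assumptions (\ref{phdn2})--(\ref{phdn4}) of Proposition \ref{phdn}, with the same special treatment of the last row via the first-column equation and of the diagonal and pivot entries. Your substitution $P=F\widehat{U}^T$ and the uniform ``free sub-diagonal estimate'' are a cleaner packaging of the paper's case-by-case bookkeeping, not a different argument.
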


\begin{proof}
We have
$$
\widehat{U}^T_{ij} = 
\left\{
\begin{array}{rl}
1&\mbox{ if }\,\, i=j=i_0\\[2pt]
p&\mbox{ if }\,\,i=j\neq i_0\\[2pt]
f_j&\mbox{ if }\,\,i>j\mbox{ and } i=i_0\\[2pt]
0&\mbox{ if }\,\,i>j\mbox{ and } i\neq i_0\\[2pt]
0&\mbox{ if }\,\,i<j.
\end{array}
\right.
$$
A straightforward computation gives
$$(F\widehat{U}^TA)_{ik}=
\left\{
\begin{array}{ll}
\sum_{j<i_0}(pa_jF_{ij}+f_ja_jF_{i,i_0})+
a_{i_0}F_{i,i_0}+\sum_{j>i_0}pa_j F_{ij}&
\mbox{ if }\,\,k=1\\[2pt]
pb_{k-1}F_{i,k-1}+f_{k-1}b_{k-1}F_{i,i_0}&
\mbox{ if }\,\,1<k<i_0+1\\[2pt]
b_{i_0}F_{i,i_0}&
\mbox{ if }\,\,k=i_0+1\\[2pt]
pb_{k-1}F_{i,k-1}&
\mbox{ if }\,\,k>i_0+1
\end{array}
\right.
$$
and
\small
$$
(F_{00}A F\widehat{U}^T)_{ik}=
\left\{
\begin{array}{ll}
F_{00}pa_iF_{1k}+F_{00}f_ka_iF_{1,i_0}+
F_{00}pb_i F_{i+1,k}+F_{00}f_kb_iF_{i+1,i_0}&
\mbox{ if }\,\,i<d\mbox{ and }k<i_0\\[2pt]
F_{00}a_iF_{1,i_0}+F_{00}b_iF_{i+1,i_0}&
\mbox{ if }\,\,i<d\mbox{ and } k=i_0\\[2pt]
F_{00}pa_iF_{1k}+F_{00}pb_iF_{i+1,k}&
\mbox{ if }\,\,i<d\mbox{ and } k>i_0\\[2pt]
F_{00}pa_dF_{1k}+F_{00}f_ka_dF_{1,i_0}&
\mbox{ if }\,\,i=d\mbox{ and } k<i_0\\[2pt]
F_{00}a_dF_{1,i_0}&
\mbox{ if }\,\,i=d\mbox{ and } k=i_0\\[2pt]
F_{00}pa_dF_{1k}&
\mbox{ if }\,\,i=d\mbox{ and } k>i_0.
\end{array}
\right.
$$
\normalsize
We denote by $F(i,k)$ the equality 
$(F\widehat{U}^TA)_{ik}= (F_{00}A F\widehat{U}^T)_{ik}$,
which is true for all $i$ and $k$ by assumption. 
Observe that $b_i\neq 0$ whenever $i<d$.
We divide the proof of the two items of the statement of the theorem in four cases.
The fourth case will be treated in detail, while the
details of the other cases are left to the reader.

\begin{enumerate}
\item \textit{Item (\ref{lpFij1}) of the statement, proof of $p|F_{i,i_0}$.}
The claim $p|F_{i,i_0}$ follows from $F(i,k_0+1)$. 
The proof has to be done by descending induction on $i$,
since for $i=k_0$
one needs to use that $p|F_{k_0+1,i_0}$.
\item 
\textit{Item (\ref{lpFij1}) of the statement, proof of $p|F_{ij}$.}
The claim $p|F_{ij}$
follows from $F(i,j+1)$.
We observe that for $i=k_0$ and $j=k_0-1$ one has also to use that
$p|F_{k_0+1,i_0}$, which was proven in previous item.
\item 
\textit{Item (\ref{lpFij2}) of the statement, case $i_0<d$.}
The claim $p|F_{ij}$
follows from $F(i,j+1)$.
The proof has to be done by descending induction on $j$,
since for $i=i_0$ and $j=i_0-1$
one needs to use that $p|F_{i_0+1,i_0}$.
\item 
\textit{Item (\ref{lpFij2}) of the statement, case $i_0=d$.}
We have to prove that $p|F_{dj}$ for all $j$.
\begin{enumerate}
\item Assume $j<d-1$. The equation $F(d,j+1)$ reads
$pb_jF_{dj}=F_{00}pa_dF_{1,j+1}$.
Hence, $v(F_{dj})\ges v(a_d)-v(b_j)$. In particular, $p|F_{dj}$.
\item Assume $j=d-1$. The equation $F(d,j+1)$ reads
$pb_{d-1}F_{d,d-1}=F_{00}a_dF_{1,d}$.
Hence, $v(F_{d,d-1})\ges v(a_d)-v(b_{d-1})-1$. In particular, $p|F_{d,d-1}$.
\item Assume $j=d$. The equation $F(d,1)$ reads
$ \sum_{j<d}pa_jF_{dj} + a_d F_{dd}=F_{00}pa_dF_{11}$.
For all $j<d-1$, we have 
$v(pa_jF_{dj})=1+v(a_j)+v(F_{dj})\ges 1+v(a_j)+ v(a_d)-v(b_j)>v(a_d)$.
For $j=d-1$, we have 
$v(pa_{d-1}F_{d,d-1})=1+v(a_{d-1})+v(F_{d,d-1})\ges 
1+v(a_{d-1})+ v(a_d)-v(b_{d-1})-1>v(a_d)$. Hence, $p|F_{dd}$.
\end{enumerate}

\end{enumerate}
\end{proof}

\begin{remark}\label{rmbases}
Let $L$ be a $(d+1)$-dimensional $\bb{Z}_p$-lattice endowed with a basis
$(x_0,...,x_d)$, and let $M\subseteq L$ be an index-$p$ submodule.
Exactly one of the following cases holds (cf. \cite[Lemma 2.23]{NS2019}):
\begin{enumerate}
\item \label{ihdpa1}
$(y_0,...,y_d)$ is a basis of $M$, where
$y_0 = px_0$ and $y_i = x_i$.
\item \label{ihdpa2}
There exist $i_0\in\{1,...,d\}$ and $f_0\in\bb{Z}_p$
such that $(y_0,...,y_d)$ is a basis of $M$, where
$y_0=x_0-f_0x_{i_0}$ and 
$$
y_i =
\left\{
\begin{array}{ll}
x_i  & \mbox{ if }\,\,i\neq i_0\\
px_{i_0} & \mbox{ if }\,\,i= i_0.
\end{array}
\right.
$$
\item \label{ihdpa3}
There exist $k_0,i_0\in\{1,...,d\}$ 
and $f_0, f_{k_0},...,f_{i_0-1}\in\bb{Z}_p$
such that $k_0<i_0$, $f_{k_0}$ is invertible in $\bb{Z}_p$,
and $(y_0,...,y_d)$ is a basis of $M$, where
$y_0=x_0-f_0x_{i_0}$ and 
$$
y_i =
\left\{
\begin{array}{ll}
x_i  & \mbox{ if }\,\,i < k_0\\
x_i-f_ix_{i_0} & \mbox{ if }\,\,k_0\les i < i_0\\
px_{i_0} & \mbox{ if }\,\, i = i_0\\
x_{i} & \mbox{ if }\,\, i > i_0.
\end{array}
\right.
$$

\end{enumerate}

\end{remark}

\begin{lemma}\label{llabinv} 
Let $L$ be a $(d+1)$-dimensional lattice, let $M\subseteq L$ be an index-$p$
submodule, and let $\varphi:M\rar L$ be a homomorphism of modules.
Let $(x_0,...,x_d)$ be a basis of $L$, 
let $(y_0,...,y_d)$ be a basis of $M$,
and let $y_\beta = \sum_\alpha F_{\alpha\beta}x_\alpha$.
Assume $F_{0i}=0$ for all $i$.
Let $I_i:= \gen{z_1,...,z_d}$, where 
$$
z_j =
\left\{
\begin{array}{ll}
x_j  & \mbox{ if }\,\,j<i\\
px_j & \mbox{ if }\,\,j\ges i
\end{array}
\right.
$$
(cf. Remark \ref{rlabid}).
Then the following holds.
\begin{enumerate}
\item \label{llabinv2} 
Assume that $(y_0,...,y_d)$ has the form displayed in case 
(\ref{ihdpa2}) of Remark \ref{rmbases}. Then:
\begin{enumerate}
 \item
 $I_{i_0}\subseteq M$.
 \item $\varphi(I_{i_0})\subseteq I_{i_0}$ if and only if 
 $p|F_{ij}$ for $i\ges i_0$ and $j\les i_0$.
 \end{enumerate}
\item \label{llabinv1}
Assume that $(y_0,...,y_d)$ has the form displayed in case 
(\ref{ihdpa3}) of Remark \ref{rmbases}. Then:
\begin{enumerate}
 \item
 $I_{k_0}\subseteq M$.
 \item $\varphi(I_{k_0})\subseteq I_{k_0}$ if and only if $p|F_{i,i_0}$
 for  $i\ges k_0$, and $p|F_{ij}$ for $i\ges k_0$ and $j<k_0$.
 \end{enumerate}
\end{enumerate}
\end{lemma}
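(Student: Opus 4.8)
The plan is to reduce both items to one mechanical recipe: express each displayed generator of $I_{i_0}$ (resp.\ $I_{k_0}$) as a $\bb{Z}_p$-combination of the basis $\bm{y}=(y_0,\dots,y_d)$ of $M$, apply $\varphi$ through its matrix, using $\varphi(y_\beta)=\sum_\alpha F_{\alpha\beta}x_\alpha$, and then test membership of the images in $I_{i_0}$ (resp.\ $I_{k_0}$) coefficient by coefficient. The test I would use throughout is the elementary observation that, writing $w=\sum_\alpha c_\alpha x_\alpha$, one has $w\in I_i$ if and only if $c_0=0$ and $p\mid c_l$ for every $l\ges i$, the coefficients $c_j$ with $1\les j<i$ being unconstrained. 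The hypothesis $F_{0i}=0$ for all $i\ges 1$ handles the condition $c_0=0$ uniformly: every generator of $I_{i_0}$ or $I_{k_0}$ is a $\bb{Z}_p$-combination of the $y_\beta$ with $\beta\ges 1$, whose $\varphi$-images have vanishing $x_0$-component.

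For the containments (the `(a)' parts) I would simply exhibit each generator as a combination of the $y_\beta$. In case (\ref{ihdpa2}) this is immediate from $x_j=y_j$ for $j<i_0$, $\,px_{i_0}=y_{i_0}$, and $px_j=py_j$ for $j>i_0$, giving $I_{i_0}\subseteq M$. In case (\ref{ihdpa3}) the only point to notice is that for $k_0\les j<i_0$ one has $x_j=y_j+f_jx_{i_0}$, hence $px_j=py_j+f_jy_{i_0}\in M$; together with $x_j=y_j$ for $j<k_0$, $\,px_{i_0}=y_{i_0}$, and $px_j=py_j$ for $j>i_0$, this yields $I_{k_0}\subseteq M$.

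For the invariance criteria (the `(b)' parts) I would apply $\varphi$ to the expressions just obtained. In case (\ref{ihdpa2}) the images are $\varphi(x_j)=\sum_\alpha F_{\alpha j}x_\alpha$ for $j<i_0$, $\,\varphi(px_{i_0})=\sum_\alpha F_{\alpha,i_0}x_\alpha$, and $\varphi(px_j)=p\sum_\alpha F_{\alpha j}x_\alpha$ for $j>i_0$. The last family lands in $I_{i_0}$ automatically, so imposing the membership test only on the first two families produces exactly $p\mid F_{lj}$ for $l\ges i_0$ and $j\les i_0$, as asserted. In case (\ref{ihdpa3}) the computation is identical except for the generators $px_j$ with $k_0\les j<i_0$, whose images are $p\sum_\alpha F_{\alpha j}x_\alpha+f_j\sum_\alpha F_{\alpha,i_0}x_\alpha$; since the first summand is already divisible by $p$, the test on the $x_l$-coefficient $(l\ges k_0)$ reduces to the condition $p\mid f_jF_{l,i_0}$.

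The main obstacle—indeed the only step that is not bookkeeping—is the collapse of these cross-term conditions in case (\ref{ihdpa3}), and here I would use that $f_{k_0}$ is a unit in $\bb{Z}_p$. Taking $j=k_0$ turns $p\mid f_{k_0}F_{l,i_0}$ into the single condition $p\mid F_{l,i_0}$ for all $l\ges k_0$, and conversely this condition forces $p\mid f_jF_{l,i_0}$ for every $j$ with $k_0\les j<i_0$, so the whole family imposes nothing beyond $p\mid F_{l,i_0}$ $(l\ges k_0)$. The image of $px_{i_0}$ imposes the same condition on column $i_0$, while the images of $x_j$ with $j<k_0$ impose $p\mid F_{lj}$ for $l\ges k_0$; no condition survives on the columns $j$ with $k_0\les j\les i_0-1$ or $j>i_0$. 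Assembling these gives precisely $p\mid F_{i,i_0}$ for $i\ges k_0$ together with $p\mid F_{ij}$ for $i\ges k_0$, $j<k_0$, which completes the proof.
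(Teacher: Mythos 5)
Your proposal is correct and follows essentially the same route as the paper: express each generator of $I_{i_0}$ (resp.\ $I_{k_0}$) in the basis $(y_0,\dots,y_d)$, apply $\varphi$ via the matrix $F$, and read off the divisibility conditions coefficient by coefficient, with $F_{0i}=0$ killing the $x_0$-component. The only cosmetic difference is that you extract the condition $p\mid F_{l,i_0}$ from the cross-terms using that $f_{k_0}$ is a unit, whereas it already falls out of the generator $px_{i_0}=y_{i_0}$ (as you also note), so that step is redundant but harmless.
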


\begin{proof}
We prove item (\ref{llabinv1}), leaving item (\ref{llabinv2}), which is similar,
to the reader. Since 
$$
z_j =
\left\{
\begin{array}{ll}
y_j  & \mbox{ if }\,\,j\les k_0\\
py_j+f_jy_{i_0}  & \mbox{ if }\,\,k_0\les j< i_0\\
y_j  & \mbox{ if }\,\,j= i_0\\
py_j & \mbox{ if }\,\,j> i_0,
\end{array}
\right.
$$
we have
$I_{k_0}\subseteq M$.
Observe that 
$\varphi(I_{k_0})\subseteq I_{k_0}$ if and only if 
$\varphi(z_j)\in I_{k_0}$ for all $j$. Since 
$$
\varphi(z_j) =
\left\{
\begin{array}{ll}
\sum_i F_{ij}x_i  & \mbox{ if }\,\,j< k_0\\[4pt]
\sum_i (pF_{ij}+f_j F_{i,i_0})x_i  & \mbox{ if }\,\,k_0\les j< i_0\\[4pt]
\sum_i F_{i,i_0}x_i & \mbox{ if }\,\,j= i_0\\[4pt]
\sum_i pF_{ij}x_i  & \mbox{ if }\,\,j> i_0,
\end{array}
\right.
$$
item (\ref{llabinv1}) follows.
\end{proof}

\vspace{5mm}
\noindent
\textbf{Proof of Proposition \ref{phdn}.}
Let $L=L(a,b)$, $M\subseteq L$ be a subalgebra of index $p$,
and $\varphi:M\rar L$ be a homomorphism of algebras.
We have to show that there exists a non-trivial $\varphi$-invariant ideal
$I$ of $L$. Observe that any $b_i$ is non-zero, and that $\mr{dim}[L,L]=d$. 
Moreover,
$v(a_i)\ges 1$ for all $i$, and any $I_\alpha$ 
is a non-trivial ideal of $L$ (see Remark \ref{rlabid} and Lemma \ref{llabid}).
Let $\bm{x}=(x_0,...,x_d)$ be the canonical basis of $L$,
and let $\bm{y}=(y_0,...,y_d)$ be a basis of $M$ in one of the forms given in
Remark \ref{rmbases}. The bases $\bm{x}$
and $\bm{y}$ are good bases for $L$ and $M$ respectively (cf. Lemma \ref{lsmetB}).
Let $A$ be the matrix of $L$ with respect to $\bm{x}$
(cf. Section \ref{smet}), and observe
that it is equal to the matrix $A$ of Lemma \ref{lpFij}.
Let $y_\beta=\sum_\alpha U_{\alpha\beta}x_\alpha$, and let
$\varphi(y_\beta) =\sum_\alpha F_{\alpha\beta}x_\alpha$.
By Lemma \ref{lmetmor}(\ref{lmetmor2}), $F_{0i}=0$ for all $i$.
The proof is completed below by considering each one of the three cases
of Remark \ref{rmbases}. For the last two cases, in order to apply
Lemma \ref{lpFij}, we have to make some observations.
In those cases, $U_{00}=0$ and the $d\times d$ matrix $U=(U_{ij})$
has the format of the one of Lemma \ref{lpFij}. The matrix
of $M$ with respect to $\bm{y}$ is $B=U^{-1}AU$; moreover,
$FB=F_{00}AF$ (Lemma \ref{lmetmor}(\ref{lmetmor3})).
An easy computation gives $F\widehat{U}^TA= F_{00}AF\widehat{U}^T$,
where $\widehat{U}$ is the cofactor matrix of $U$.
\begin{enumerate}
\item For case (\ref{ihdpa1}) of Remark \ref{rmbases}
we take $I=I_0$, which is invariant by Corollary \ref{cFeqs}(\ref{cFeqs2}).
\item For case (\ref{ihdpa2}) of Remark \ref{rmbases}
we take $I=I_{i_0}$, which is invariant by
Lemma \ref{lpFij}(\ref{lpFij2}) and Lemma \ref{llabinv}(\ref{llabinv2}).
\item For case (\ref{ihdpa3}) of Remark \ref{rmbases}
we take $I=I_{k_0}$, which is invariant by
Lemma \ref{lpFij}(\ref{lpFij1}) and Lemma \ref{llabinv}(\ref{llabinv1}).
\ep
\end{enumerate}


\subsection{Strongly hereditarily self-similar Lie lattices}\label{sechss}

\begin{definition}\label{dhss}
Let $L$ be a $\bb{Z}_p$-Lie lattice, and let $k\in\bb{N}$.
\begin{enumerate}
\item \label{dhss1}
$L$ is
\textbf{hereditarily self-similar of index $\bm{p}^k$}
if and only if any finite-index subalgebra of $L$
is self-similar of index $p^k$.
\item \label{dhss2}
$L$ is 
\textbf{strongly hereditarily self-similar of index $\bm{p}^k$}
if and only if $L$ is self-similar of index $p^k$ and 
any non-zero subalgebra of $L$
is self-similar of index $p^k$.
\end{enumerate}
\end{definition}

The main result of this section is as follows,
and the proof of the theorem will be given at the end of the section.

\begin{definition}
Let $d\ges 2$ be an integer, and let $a\in\bb{Z}_p$. 
We define an antisymmetric $d$-dimensional $\bb{Z}_p$-algebra $L^d(a)$
as follows. As a $\bb{Z}_p$-module, $L^d(a)=\bb{Z}_p^{d}$. Denoting by
$(x_0,...,x_{d-1})$ the canonical basis of $\bb{Z}_p^d$,  
the bracket of $L^d(a)$ is induced by
the commutation relations $[x_i,x_j]=0$ and $[x_0,x_i]=ax_i$,
where $i,j$ take values in $\{1,...,d-1\}$.
\end{definition}

\begin{theorem}\label{tshssp}
Assume $p\ges 3$.
Let $d\ges 2$ be an integer, and let $L$ be a solvable $\bb{Z}_p$-Lie lattice 
of dimension $d$ that is  strongly
hereditarily self-similar of index $p$.
Then $L\simeq L^d(p^s)$ for a unique $s\in\bb{N}\cup\{\infty\}$
(with $p^\infty := 0$).
\end{theorem}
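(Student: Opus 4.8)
The plan is to induct on $d=\mr{dim}(L)$, using $d=2,3$ as the base: the case $d=3$ is exactly the solvable part of Proposition \ref{hereddim3v2}, and $d=2$ is immediate since every $2$-dimensional solvable $\bb{Z}_p$-Lie lattice is isomorphic to some $L^2(p^s)$ (choose a generator of $\mr{iso}_L([L,L])$ as $x_1$). Two preliminary remarks organize everything. First, being strongly hereditarily self-similar of index $p$ passes to non-zero subalgebras, since a non-zero subalgebra of a subalgebra of $L$ is again a non-zero subalgebra of $L$; hence every non-zero subalgebra of $L$, in any dimension, is self-similar of index $p$, which is precisely what lets us feed the non-self-similarity results (Proposition \ref{pll1nssp} and Corollary \ref{cnsspsub}) back into a contradiction. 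Second, uniqueness of $s$ is clear: $L^d(0)$ is characterised by $[L,L]=0$, while for $s<\infty$ one has $\mr{iso}_L([L,L])=\gen{x_1,\ldots,x_{d-1}}$ and $[L,L]=p^s\,\mr{iso}_L([L,L])$, so $s$ is read off as an index.

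The technical heart is an auxiliary \emph{Scalar Lemma}, which I would prove first: if $N$ has a codimension-$1$ abelian ideal $V$ and every non-zero subalgebra of $N$ is self-similar of index $p$, then for any $x_0$ with $N=\bb{Z}_px_0\oplus V$ the operator $A:=[x_0,\cdot\,]|_V$ is scalar. To prove it I assume $A$ non-scalar and manufacture a subalgebra that is not self-similar of index $p$, guided by the Fitting decomposition $V\otimes\bb{Q}_p=V_0\oplus V_1$ of $A$ into its nilpotent part $V_0$ and invertible part $V_1$. If $A$ acts non-trivially on $V_0$, pick a lattice vector $v$ with $Av\neq 0$, $A^2v=0$; then $\gen{x_0,v,Av}$ is $3$-dimensional with $1$-dimensional derived subalgebra, so it is not self-similar of index $p$ by Proposition \ref{pll1nssp}. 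Otherwise $A$ vanishes on $V_0=\ker A$. If $A|_{V_1}$ is non-scalar, choose an $A$-cyclic vector $v$ spanning a cyclic block $W\subseteq V_1$ of dimension $e\ges 2$; the sublattice $\gen{v,Av,\ldots,A^{e-1}v}$ is $A$-invariant by Cayley--Hamilton, and $\gen{x_0}\oplus\gen{v,Av,\ldots,A^{e-1}v}$ is isomorphic to the lattice $L(a,b)$ of Definition \ref{dLab} with $b=(1,\ldots,1)$ and $a_e\neq 0$ (the latter because $A|_W$ is invertible), so Corollary \ref{cnsspsub} yields a finite-index subalgebra that is not self-similar of index $p$. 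In the remaining case $A|_{V_1}=\mu\,\mr{Id}$ with $\mu\neq 0$; non-scalarity of $A$ forces $V_0\neq 0$, and for lattice vectors $v\in V_1$, $w\in V_0$ the subalgebra $\gen{x_0,v,w}$ again has $1$-dimensional derived subalgebra, so Proposition \ref{pll1nssp} applies. In each case the constructed subalgebra lies in $N$, a contradiction; hence $A$ is scalar, and setting $e=\mr{dim}\,N$ we get $N\simeq L^{e}(a)\simeq L^{e}(p^{v_p(a)})$ by rescaling $x_0$ by a unit.

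With the Scalar Lemma in hand, the inductive step reduces to producing a codimension-$1$ abelian ideal of $L$. Assuming $L$ non-abelian (else $L=L^d(0)$), solvability gives a codimension-$1$ ideal $H\supseteq[L,L]$ with $L=\bb{Z}_px_0\oplus H$; being a subalgebra, $H$ is strongly hereditarily self-similar of index $p$ and solvable of dimension $d-1$, so by the inductive hypothesis $H\simeq L^{d-1}(p^t)$. If $t=\infty$ then $H$ is a codimension-$1$ abelian ideal and the Scalar Lemma gives $L\simeq L^d(p^s)$. If $t<\infty$, write $H=\bb{Z}_px_1\oplus V_H$ with $V_H=\mr{iso}_H([H,H])$ abelian of dimension $d-2$ (an ideal of $L$, being characteristic in $H$) and $[x_1,\cdot\,]|_{V_H}=p^t\,\mr{Id}$. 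Comparing $\mr{ad}([x_0,x_1])$ with $[\mr{ad}(x_0),\mr{ad}(x_1)]$ on $V_H$ forces $[x_0,x_1]\in V_H$, so $K:=\gen{x_0}\oplus V_H$ is a subalgebra with codimension-$1$ abelian ideal $V_H$, and the Scalar Lemma applied to $K$ shows $B:=[x_0,\cdot\,]|_{V_H}=\lambda\,\mr{Id}$ is scalar. A short case analysis in the parameters $\lambda$ and $v:=[x_0,x_1]\in V_H$ then produces, in each configuration, either a codimension-$1$ abelian ideal of $L$ on which a complementary generator acts non-scalarly (excluded by the Scalar Lemma applied to $L$), or, in the central configuration $\lambda=0,\ v=0$, a $3$-dimensional subalgebra $\gen{x_0,x_1,w}$ with $0\neq w\in V_H$ having $1$-dimensional derived subalgebra (excluded by Proposition \ref{pll1nssp}). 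Hence $t<\infty$ cannot occur, completing the induction.

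I expect the main obstacle to be this last structural reduction, the case $t<\infty$: one must correctly exploit the inductive hypothesis on the \emph{subalgebra} $K$ and then bootstrap the resulting scalar information back to $L$, tracking the two parameters $\lambda$ and $v$ and deciding in each sub-configuration whether to invoke the Scalar Lemma (through Corollary \ref{cnsspsub}) or Proposition \ref{pll1nssp}. By contrast, the Scalar Lemma itself, although it requires the Fitting case split, is a fairly direct application of the two non-self-similarity inputs proved earlier in the section.
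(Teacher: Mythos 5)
Your proof is correct, but it is organized quite differently from the paper's. Both arguments run the same induction on $d$ with Proposition \ref{hereddim3v2} as the $d=3$ base and both feed the same two non-self-similarity workhorses (Proposition \ref{pll1nssp} for $3$-dimensional subalgebras with $1$-dimensional derived subalgebra, and Corollary \ref{cnsspsub} for the cyclic lattices $L(a,b)$) into contradictions; but you apply the inductive hypothesis ``from above,'' to a codimension-$1$ ideal $H\supseteq[L,L]$, whereas the paper applies it ``from below,'' to the low-dimensional subalgebras $\gen{x_\alpha}\oplus J$ and to the $2$-generated subalgebras $M'_z$, where $J=\mr{iso}_L[L,L]$, after first splitting into the cases $\mr{dim}\,J<d-1$ and $\mr{dim}\,J=d-1$. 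Your Scalar Lemma is a clean abstraction of what the paper does concretely in that second case: the Fitting split into nilpotent and invertible parts corresponds to the paper's dichotomy between subalgebras with $1$-dimensional derived subalgebra and the cyclic chain $z_{i+1}=[x,z_i]$ whose independence the paper checks by hand modulo $p$ before invoking Corollary \ref{cnsspsub}; I verified that the lattice $\gen{v,Av,\dots,A^{e-1}v}$ in your Case 2 is indeed $A$-invariant with integral characteristic polynomial and nonzero top coefficient, so the reduction to $L(a,b)$ with $b=(1,\dots,1)$ goes through. Two further genuine differences: you never need the paper's preliminary step that $[L,L]$ is abelian (proved there via Lie's theorem over $\bb{Q}_p$ and a nilpotency argument), since in your setup the abelian ideal $V_H=\mr{iso}_H[H,H]$ comes for free from the inductive classification of $H$; and your closing case analysis for $t<\infty$ is, if anything, more elaborate than necessary --- once $[x_0,x_1]=v\in V_H$ and $[x_0,\cdot]|_{V_H}=\lambda\,\mr{Id}$ are in hand, the single subalgebra $\gen{x_0,x_1,w}$ with $w=v$ if $v\neq 0$ and $w$ any nonzero element of $V_H$ otherwise already has $1$-dimensional derived subalgebra (all brackets land in $\gen{w}$, and $[x_1,w]=p^tw\neq 0$), so Proposition \ref{pll1nssp} kills every configuration without needing the auxiliary abelian ideal $\gen{-p^t g^{-1}x_0+\lambda g^{-1}x_1}\oplus V_H$. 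What your route buys is modularity (the Scalar Lemma is reusable and isolates the linear algebra); what the paper's buys is avoiding the codimension-$1$ ideal bookkeeping by working directly with the derived isolator.
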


\vspace{0mm}

Before proving the theorem we provide some examples and make some remarks
on hereditarily self-similar Lie lattices.

\begin{remark}\label{rhered}
Let $L$ be a $\bb{Z}_p$-Lie lattice. Clearly, if $L$ is strongly 
hereditarily self-similar of index $p^k$ then $L$ is
hereditarily self-similar of index $p^k$. From 
\cite[Remark 2.2]{NS2019} it follows that if $L$ has 
dimension 1 or 2 then $L$ 
is strongly hereditarily self-similar of index $p^k$
for all $k\ges 1$.
Consequently,
if $L$ has dimension 3 and
$L$ is hereditarily self-similar of index $p^k$ then
$L$ is strongly hereditarily self-similar of index $p^k$.
Proposition \ref{hereddim3v2} below classifies, for $p\ges 3$, 
the 3-dimensional Lie lattices that are hereditarily self-similar of index $p$.
\end{remark}

\begin{proposition}\label{heranyindex}
Let $m\ges 1$ be an integer, and let $L$ be a $3$-dimensional
solvable $\bb{Z}_p$-Lie lattice. Then $L$ is strongly hereditarily self-similar of index
$p^{2m}$.
\end{proposition}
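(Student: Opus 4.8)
The plan is to reduce everything to the two tools already available in this range, namely Corollary \ref{csigmap2} and Remark \ref{rhered}, and to organize the argument as a case analysis on the $\bb{Z}_p$-rank of an arbitrary non-zero subalgebra. By Definition \ref{dhss}(\ref{dhss2}), it suffices to prove that $L$ itself is self-similar of index $p^{2m}$ and that every non-zero subalgebra $H\subseteq L$ is self-similar of index $p^{2m}$. Since $L$ has dimension $3$, any such $H$ is free of rank $1$, $2$ or $3$, and I would treat these three ranks separately.

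For $L$ itself, and more generally for every rank-$3$ subalgebra $H$, I would apply Corollary \ref{csigmap2} with $k=m$. The only point needing verification is that such an $H$ is again a $3$-dimensional \emph{solvable} $\bb{Z}_p$-Lie lattice: as a non-zero submodule of the free rank-$3$ module $L$ it is free, and having full rank $3$ it has finite index in $L$; moreover, being a subalgebra of the solvable lattice $L$, its derived series is contained in that of $L$ and hence terminates, so $H$ is solvable. Thus Corollary \ref{csigmap2} applies and yields self-similarity of $H$ (and of $L$) of index $p^{2m}$.

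For a subalgebra $H$ of rank $1$ or $2$, I would invoke Remark \ref{rhered} directly: any $\bb{Z}_p$-Lie lattice of dimension $1$ or $2$ is strongly hereditarily self-similar of index $p^k$ for every $k\ges 1$, and in particular self-similar of index $p^{2m}$; note that no solvability hypothesis is needed here, as it is automatic in these dimensions. Combining the three cases, every non-zero subalgebra of $L$, including $L$ itself, is self-similar of index $p^{2m}$, which is exactly the assertion of Definition \ref{dhss}(\ref{dhss2}).

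Since all the required building blocks are already in place, I do not expect a genuine obstacle. The only step that calls for a moment's care is the bookkeeping in the rank-$3$ case: one must observe that a full-rank subalgebra automatically inherits both finite index and solvability, so that Corollary \ref{csigmap2} is legitimately applicable to it.
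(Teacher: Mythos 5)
Your proposal is correct and is essentially the paper's own proof: the paper simply cites Corollary \ref{csigmap2} together with Remark \ref{rhered}, and your case analysis on the rank of a subalgebra (rank $3$ gives a finite-index solvable subalgebra handled by Corollary \ref{csigmap2}; ranks $1$ and $2$ are covered by Remark \ref{rhered}) is exactly the unpacking of that one-line argument.
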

\begin{proof}
The proposition follows from 
Corollary \ref{csigmap2} and Remark \ref{rhered}.
\end{proof}

\vspace{3mm}
 
Proposition \ref{heranyindex} and 
\cite[Proposition 3.1]{SnopceJGT2016} 
have a consequence that
we find worth to state explicitly. We recall that, by definition, 
two Lie lattices $L_1$ and $L_2$
are incommensurable if there are no finite-index subalgebras $M_1\subseteq L_1$ 
and $M_2\subseteq L_2$ such that $M_1\simeq M_2$.

\begin{corollary}
There exists a set $\cl{H}$ of the cardinality of the continuum such that
any element of $\cl{H}$ is a $\bb{Z}_p$-Lie lattice that is 
strongly hereditarily self-similar of index
$p^{2m}$ for each  $m\ges 1$, and such that any two distinct elements of $\cl{H}$
are incommensurable.
\end{corollary}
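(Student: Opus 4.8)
The plan is to assemble $\cl{H}$ from an already-known continuum of pairwise incommensurable lattices and then to upgrade each of its members to a strongly hereditarily self-similar one by a direct appeal to Proposition \ref{heranyindex}; the corollary is essentially the juxtaposition of these two inputs.

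First I would invoke \cite[Proposition 3.1]{SnopceJGT2016} to produce a family of $3$-dimensional solvable $\bb{Z}_p$-Lie lattices of cardinality the continuum, any two distinct members of which are incommensurable, and take $\cl{H}$ to be this family. If the cited statement is phrased in terms of the associated torsion-free $p$-adic analytic pro-$p$ groups rather than the lattices themselves, I would first pass to the associated $\bb{Z}_p$-Lie lattices, using that the group-to-lattice correspondence respects commensurability in both directions; thus a continuum of pairwise incommensurable groups yields a continuum of pairwise incommensurable lattices. Then, since every member of $\cl{H}$ is a $3$-dimensional solvable $\bb{Z}_p$-Lie lattice, Proposition \ref{heranyindex} applies to each of them and shows that each is strongly hereditarily self-similar of index $p^{2m}$ for every $m\ges 1$. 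Combined with the pairwise incommensurability from the previous step, this is precisely the assertion of the corollary.

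There is no serious obstacle, since the two cited results already supply everything. The only point deserving attention is bookkeeping: one must check that the family extracted from \cite{SnopceJGT2016} genuinely consists of $3$-dimensional \emph{solvable} lattices, so that Proposition \ref{heranyindex} is applicable to each element, and that incommensurability is preserved under the translation between groups and lattices in case such a translation is needed. Once these compatibilities are recorded, the two results combine at once to give $\cl{H}$.
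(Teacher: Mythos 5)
Your proposal is correct and matches the paper's argument exactly: the corollary is stated there as an immediate consequence of Proposition \ref{heranyindex} combined with \cite[Proposition 3.1]{SnopceJGT2016}, which is precisely the combination you describe. Your additional remarks about checking solvability and the compatibility of commensurability under the group--lattice translation are sensible bookkeeping but do not change the route.
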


The next results are interesting on their own and they are a preparation
for the proof of Theorem \ref{tshssp}.

\begin{remark}\label{rlda2}
We list some properties of $L= L^d(a)$ that the reader may easily prove.
The Lie lattice $L$  belongs to the class discussed
in Section \ref{smet}; in particular, 
$L$ is a Lie lattice and $\delta_2(L)=\{0\}$.
We have $L^d(a)\simeq L^e(b)$ if and only if $d=e$ and $v_p(a)=v_p(b)$;
moreover,
$L$ is abelian if and only if $a=0$. If $a\neq 0$ then 
$\mr{iso}_L [L,L]=\gen{x_1,...,x_{d-1}}$.
Any submodule of $L$ is a subalgebra,
and any 2-generated subalgebra of $L$ has dimension at most 2.
Finally, note that $L_0 = L^3(0)$,
$L_1(s)=L^3(p^s)$, and $L_{\ref*{C6}}(a) = L^3(a)$ (see Definition \ref{dlielat}).
\end{remark}

\begin{lemma}\label{llda}
Let $d\ges 2$ be an integer, let $a\in\bb{Z}_p$,
and let $M$ be a subalgebra of $L^d(a)$ of dimension $e\ges 2$.
Then $M\simeq L^e(p^sa)$ for some $s\in\bb{N}\cup\{\infty\}$.
\end{lemma}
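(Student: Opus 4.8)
The plan is to analyze $M$ through its intersection with the canonical abelian ideal of $L=L^d(a)$. Write $I=\gen{x_1,\dots,x_{d-1}}$, the $(d-1)$-dimensional abelian ideal of $L$ on which $x_0$ acts by multiplication by the scalar $a$; thus $[x_0,z]=az$ for every $z\in I$, and $L/I\simeq\bb{Z}_p$ is generated by the image of $x_0$. This places $L$ in the class of metabelian lattices studied in Section \ref{smet}, and I would exploit throughout that $x_0$ acts as a \emph{scalar} on all of $I$.

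First I would set $J:=M\cap I$, which is an abelian ideal of $M$, and observe that the image of $M$ under the projection $L\to L/I\simeq\bb{Z}_p$ is a $\bb{Z}_p$-submodule of $\bb{Z}_p$, hence either $\{0\}$ or $p^t\bb{Z}_p$ for a unique $t\in\bb{N}$. In the first case $M\subseteq I$, so $M$ is abelian of dimension $e$ and therefore $M\simeq L^e(0)=L^e(p^\infty a)$, giving $s=\infty$. In the remaining case $J$ is free of rank $e-1$, and I would choose $y_0\in M$ with $y_0\equiv p^t x_0\pmod I$; since $M/J\simeq p^t\bb{Z}_p$ is free, the extension $0\to J\to M\to M/J\to 0$ splits, so $M=\bb{Z}_p y_0\oplus J$.

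With a basis $(z_1,\dots,z_{e-1})$ of $J$, the tuple $(y_0,z_1,\dots,z_{e-1})$ is then a basis of $M$. The heart of the argument is the bracket computation: writing $y_0=p^t x_0+w$ with $w\in I$, for each $z_i\in J\subseteq I$ one has $[w,z_i]=0$ (as $I$ is abelian) and $[x_0,z_i]=az_i$, whence $[y_0,z_i]=p^t a\,z_i$. Together with $[z_i,z_j]=0$, these are exactly the defining relations of $L^e(p^t a)$, so $M\simeq L^e(p^t a)$ and $s=t$ works.

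The step requiring the most care is isolating the two degenerate situations ($a=0$, or $M\subseteq I$) and checking that they are absorbed correctly by the convention $p^\infty:=0$; the bracket computation itself is short precisely because $x_0$ acts by a scalar, which is the reason $L^d(a)$ is so rigid under passage to subalgebras. No estimate is needed, and the freeness of $M/J$ over $\bb{Z}_p$ is what makes the splitting automatic.
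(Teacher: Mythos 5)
Your proof is correct and follows essentially the same route as the paper's: both project $M$ onto the rank-one quotient of $L$ by the codimension-one abelian ideal (the paper phrases this via $\mathrm{iso}_L[L,L]$, you via $\langle x_1,\dots,x_{d-1}\rangle$, which coincide when $a\neq 0$), both split $M$ as $\bb{Z}_p y_0\oplus(M\cap I)$ using freeness of the quotient, and both conclude from the scalar action of $x_0$. Your write-up is if anything slightly more explicit about the splitting and the degenerate cases than the paper's.
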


\begin{proof}
Denote $L=L^d(a)$ and recall that $L$ is endowed with the
basis $(x_0,...,x_{d-1})$.
Let $J_L = \mr{iso}_L[L,L]$. If $M\subseteq J_L$ then one
takes $s=\infty$. 
Assume $M\not\subseteq J_L$. Hence, $a\neq 0$ and
$L/J_L\simeq\bb{Z}_p$,
generated by the class of $x_0$. Let $\varphi:M\rar L/J_L$
be the canonical map. Then $\varphi(M)$ is non-zero,
hence, there exists $x\in M$ such that the class of $x$ in $L/J_L$
is a basis of $\varphi(M)$ over $\bb{Z}_p$.
Also, $[L/J_L:\varphi(M)]=p^s$ for some $s\in\bb{N}$.
Let $x=cx_0+\sum_j c_jx_j$, where the index takes values in $\{1,...,d-1\}$.
Observe that $v_p(c) = s$.
One proves that $M=\gen{x}\oplus (M\cap J_L)$,
from which the conclusion $M\simeq L_e(ca)\simeq L_e(p^sa)$ follows.
\end{proof}

\begin{proposition}\label{pl1dssp}
Let $d\ges 2$ and $k\ges 1$ be integers, and let $a\in\bb{Z}_p$.
Then: 
\begin{enumerate}
\item \label{pl1dssp1}
$L^d(a)$ is self-similar of index $p^k$.
\item \label{pl1dssp2}
$L^d(a)$ is strongly hereditarily self-similar of index $p^k$.
\end{enumerate}
\end{proposition}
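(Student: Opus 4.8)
The plan is to prove part (\ref{pl1dssp1}) by an explicit construction of a simple virtual endomorphism, generalizing the one used for $L_{\ref*{C6}}(a)=L^3(a)$ in Lemma \ref{lssp}(1), and then to deduce part (\ref{pl1dssp2}) almost immediately from part (\ref{pl1dssp1}) together with the structural Lemma \ref{llda}. The real content lies in part (\ref{pl1dssp1}): for each $k\ges 1$ I must exhibit a subalgebra $M\subseteq L^d(a)$ with $[L^d(a):M]=p^k$ and a simple homomorphism of algebras $\varphi:M\rar L^d(a)$.

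For part (\ref{pl1dssp1}), write $L=L^d(a)$ with canonical basis $(x_0,\dots,x_{d-1})$ and set $M=\gen{x_0,x_1,\dots,x_{d-2},p^kx_{d-1}}$, a subalgebra of index $p^k$. I would split into two cases. If $a=0$ (the abelian case) I take $\varphi$ to be the scaled cyclic shift $\varphi(x_i)=x_{i+1}$ for $0\les i\les d-2$ and $\varphi(p^kx_{d-1})=x_0$; this is automatically a homomorphism of algebras, and $\Phi=\varphi\otimes\bb{Q}_p$ has characteristic polynomial $\lambda^d-p^{-k}$. If $a\neq 0$ I instead fix $x_0$ and shift only the abelian ideal, setting $\varphi(x_0)=x_0$, $\varphi(x_i)=x_{i+1}$ for $1\les i\les d-2$, and $\varphi(p^kx_{d-1})=x_1$; since $\varphi$ maps $\gen{x_1,\dots,x_{d-1}}$ into itself and $[x_0,\cdot\,]$ acts on this ideal as multiplication by $a$, a one-line check shows $\varphi$ is a homomorphism of algebras.

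Simplicity I would establish using Proposition \ref{pabelsimp}. In the abelian case the roots of $\lambda^d-p^{-k}$ all have valuation $-k/d<0$, so the polynomial has no monic irreducible factor with coefficients in $\bb{Z}_p$ (such a factor would have constant term of negative valuation); hence $D_\infty=\{0\}$ and $\varphi$ is simple. In the non-abelian case, let $\psi$ be the restriction of $\varphi$ to $M\cap\gen{x_1,\dots,x_{d-1}}$, which is an isomorphism onto $\gen{x_1,\dots,x_{d-1}}$; exactly as in Lemma \ref{lssp} one has $D_\infty=\gen{x_0}\oplus E_\infty$, where $E_\infty$ is the analogous intersection for $\psi$. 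The characteristic polynomial of $\psi\otimes\bb{Q}_p$ is $\lambda^{d-1}-p^{-k}$, whose roots have valuation $-k/(d-1)<0$, so the same argument gives $E_\infty=\{0\}$ and thus $D_\infty=\gen{x_0}$. Since every $\varphi$-invariant ideal is contained in $D_\infty$, it remains only to observe that for $a\neq 0$ no non-zero submodule $\gen{p^mx_0}$ of $\gen{x_0}$ is an ideal, because $[p^mx_0,x_1]=p^max_1\notin\gen{x_0}$; hence $\varphi$ is simple.

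For part (\ref{pl1dssp2}) I would combine part (\ref{pl1dssp1}) with Lemma \ref{llda}. As $L^d(a)$ is self-similar of index $p^k$ by part (\ref{pl1dssp1}), it suffices to show every non-zero subalgebra $M$ is self-similar of index $p^k$. If $\dim M=1$ then $M\simeq\bb{Z}_p$, which is self-similar of index $p^k$ (Remark \ref{rhered}); if $\dim M=e\ges 2$ then Lemma \ref{llda} gives $M\simeq L^e(p^sa)$ for some $s\in\bb{N}\cup\{\infty\}$, and applying part (\ref{pl1dssp1}) in dimension $e$ with scalar $p^sa$ (allowing $p^\infty a=0$) shows $M$ is self-similar of index $p^k$. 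I expect the only non-formal step, and hence the main obstacle, to be the simplicity verification in part (\ref{pl1dssp1}): making precise the splitting $D_\infty=\gen{x_0}\oplus E_\infty$ and carrying out the valuation computation that rules out monic irreducible factors over $\bb{Z}_p$. Once Proposition \ref{pabelsimp} is invoked these are short, and part (\ref{pl1dssp2}) is then pure bookkeeping on top of Lemma \ref{llda}.
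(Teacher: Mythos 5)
Your proposal is correct and follows essentially the same route as the paper: the paper also takes an index-$p^k$ subalgebra obtained by scaling one generator of the abelian ideal, defines a shift-type virtual endomorphism fixing $x_0$ (up to the cosmetic choice of which generator carries the factor $p^k$), computes $D_\infty=\gen{x_0}$, and deduces item (2) from item (1) together with Lemma \ref{llda}. If anything you are slightly more careful than the paper's write-up, which uses $\varphi(x_0)=x_0$ uniformly and so implicitly defers the abelian case (where $\gen{x_0}$ would be an invariant ideal) to the full cyclic shift you spell out, and your explicit appeal to Proposition \ref{pabelsimp} with the characteristic polynomials $\lambda^d-p^{-k}$ and $\lambda^{d-1}-p^{-k}$ makes the simplicity check cleaner than the paper's reference back to the argument of Proposition \ref{pabelianid}.
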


\begin{proof}
Item (\ref{pl1dssp2}) is a consequence of item (\ref{pl1dssp1})
and Lemma \ref{llda}. We prove (\ref{pl1dssp1}).
For $d=2$ see Remark \ref{rhered}. Assume $d\ges 3$.
Let $L = L^d(a)$, and let
$M = \langle x_0, p^kx_1, x_2, ..., x_{d-1} \rangle$. 
Then $M$ is a subalgebra of $L$ of index $p^k$. 
The module homomorphism $\varphi: M \to L$ determined by
$\varphi(x_0)=x_0$, $\varphi(p^k x_1)=x_2$, $\varphi(x_i)=x_{i+1}$ for
$2\les i < d-1$, and $\varphi(x_{d-1})=x_1$
is a homomorphism of algebras.
We prove that $\varphi$ is a simple.
Indeed, the 
intersection of the domains of the powers of $\varphi$
is $D_\infty =\langle x_0 \rangle$. Let $I$ be a non-trivial ideal of $L$.
Similarly to what has been done in the proof of Proposition \ref{pabelianid},
one shows that $L$ is not $\varphi$-invariant by proving the existence of 
$w\in I$ such that $w\not\in D_\infty$. 
\end{proof}

\begin{proposition}\label{hereddim3v2}
Assume $p\ges 3$, and let
$L$ be a 3-dimensional $\bb{Z}_p$-Lie lattice.
The following are equivalent.
\begin{enumerate}
\item \label{h1}
$L$ is hereditarily self-similar of index $p$.
\item \label{h2}
$L$ is isomorphic either to $L_{\ref*{C0}}$ or to $L_{\ref*{C1}}(s)$
for some $s\in\bb{N}$.
\end{enumerate}
\end{proposition}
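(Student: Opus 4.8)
The `if' direction $(\ref{h2})\Rightarrow(\ref{h1})$ is immediate from the machinery already in place. By Remark \ref{rlda2} we have $L_{\ref*{C0}}=L^3(0)$ and $L_{\ref*{C1}}(s)=L^3(p^s)$, and Proposition \ref{pl1dssp}(\ref{pl1dssp2}) asserts that each $L^3(a)$ is strongly hereditarily self-similar of index $p$; since a strongly hereditarily self-similar lattice is in particular hereditarily self-similar (Remark \ref{rhered}), both families satisfy (\ref{h1}).

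For the converse $(\ref{h1})\Rightarrow(\ref{h2})$, assume $L$ is hereditarily self-similar of index $p$. Applying the definition to the finite-index subalgebra $L\subseteq L$ itself, $L$ is self-similar of index $p$. The plan is to first dispose of the solvable case, for which the excerpt supplies all the tools. Suppose then that $L$ is solvable. By Remark \ref{rsol2di} it admits a $2$-dimensional abelian ideal; fix a good basis $(x_0,x_1,x_2)$ and let $A$ be the associated matrix. If $\mr{dim}[L,L]=0$ then $L$ is abelian, so $L\simeq L_{\ref*{C0}}$. If $\mr{dim}[L,L]=1$, then Proposition \ref{pll1nssp} shows $L$ is not self-similar of index $p$, contradicting our assumption, so this case cannot occur. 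There remains the case $\mr{dim}[L,L]=2$, i.e. $A$ invertible over $\bb{Q}_p$.

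The heart of the solvable argument is the subcase $\mr{dim}[L,L]=2$ with $A$ \emph{not} a scalar matrix, and here I would produce a single bad subalgebra, contradicting (\ref{h1}). Since $A$ is non-scalar, I would choose $v$ in the abelian ideal that is not an eigenvector of $A$ (possible because the eigenlines cannot cover $\bb{Z}_p^2$), and set $M=\gen{x_0,Av,v}$. Using Cayley--Hamilton one checks that $\gen{Av,v}$ is an abelian ideal of $M$ and that $(x_0,Av,v)$ is a good basis of $M$; by Lemma \ref{lsmetB} the matrix of $M$ in this basis is the companion matrix of the characteristic polynomial of $A$, whose upper-right entry equals $1$ and whose lower-left entry equals $-\det(A)\neq 0$. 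Thus $M\simeq L(a',b')$ in the sense of Definition \ref{dLab} with $b'_1=1$ and $a'_2=-\det(A)\neq 0$, so Corollary \ref{cnsspsub} (applied with $d=2$) yields a finite-index subalgebra $N\subseteq M\subseteq L$ that is not self-similar of index $p$, the desired contradiction. Hence $A$ is scalar, say $A=aI$ with $a\neq 0$; the relations then force $L\simeq L^3(a)\simeq L_{\ref*{C1}}(v_p(a))$ by Remark \ref{rlda2}, completing the solvable case. (One could alternatively reach the same conclusion by running through the explicit list of Theorem \ref{talgpodd}, but the companion-matrix construction treats all non-scalar $A$ at once.)

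It remains to treat unsolvable $L$, and this is the step I expect to be the main obstacle: such an $L$ has no $2$-dimensional abelian ideal, so the construction above is unavailable and the finite-index subalgebras remain unsolvable. The plan is to invoke the classification of the $3$-dimensional unsolvable $\bb{Z}_p$-Lie lattices that are self-similar of index $p$ from \cite[Theorem 2.32]{NS2019}, and then, for each lattice on that short list, to exhibit an explicit finite-index subalgebra that fails to be self-similar of index $p$; since none of these lattices is isomorphic to $L_{\ref*{C0}}$ or to $L_{\ref*{C1}}(s)$, this rules out (\ref{h1}) for unsolvable $L$ and finishes the proof. The delicate part throughout is the integrality and index bookkeeping in these subalgebra constructions (and in the companion-matrix step), which is routine but is where the real care is needed.
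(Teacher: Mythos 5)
Your solvable case is correct but takes a genuinely different route from the paper. The paper runs through the explicit isomorphism classes of Remark \ref{rclassif}: it invokes Proposition \ref{pll1nssp} when $\mr{dim}[L,L]=1$, and otherwise exhibits concrete index-$p$ subalgebras of $L_{\ref*{C2}}(s,r,c)$ and $L_{\ref*{C7}}(s,a,c)$ (e.g.\ $\gen{x_0,px_1,x_2}$, after replacing $L$ by $pL$ to arrange $s\ges 1$) and checks that they fall into the non-self-similar cases of Propositions \ref{pl2nssp} and \ref{pl7nssp}. Your companion-matrix construction --- choosing a non-eigenvector $v$, passing to $M=\gen{x_0,Av,v}$, whose matrix by Cayley--Hamilton is the companion matrix of the characteristic polynomial of $A$, and then invoking Corollary \ref{cnsspsub} with $d=2$ --- handles all non-scalar $A$ uniformly and bypasses the classification. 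I checked the details: the hypotheses of Corollary \ref{cnsspsub} ($b'_1=1$ and $a'_2=-\det A\neq 0$) are met, the corollary's proof does cover $d=2$, and the scalar case correctly yields $L\simeq L^3(a)\simeq L_{\ref*{C1}}(v_p(a))$. This is a legitimate, arguably cleaner, treatment of that half of the argument.

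The unsolvable case, however, is a genuine gap: you announce a plan but do not execute it, and the missing step is exactly the one requiring an idea. The paper's argument is short but not automatic: by \cite[Proposition 2.7]{NS2019} an unsolvable $L$ has a basis with $[x_i,x_{i+1}]=a_{i+2}x_{i+2}$ (indices mod $3$) and $v_p(a_0)\les v_p(a_1)\les v_p(a_2)$; one then rescales $y_i=p^{k_i}x_i$ so that the new structure constants satisfy the \emph{strict} inequalities $v_p(b_0)<v_p(b_1)<v_p(b_2)$, and \cite[Theorem 2.32]{NS2019} asserts that a lattice in this strict normal form is never self-similar of index $p$. Thus the bad finite-index subalgebra is produced for \emph{every} unsolvable $L$ at once, rather than lattice-by-lattice through the short list of index-$p$ self-similar unsolvable lattices as you propose. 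As written, your proposal leaves the existence of these subalgebras unverified, so the implication (\ref{h1})$\Rightarrow$(\ref{h2}) is not established for unsolvable $L$.
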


\begin{proof}
Remark \ref{rlda2} and Proposition \ref{pl1dssp}(\ref{pl1dssp2})
show that the implication `(\ref{h2})$\Rightarrow$(\ref{h1})' holds
even in greater generality than stated here.
For the other implication, we assume that (\ref{h2}) does not hold,
and 
we show that there exists a finite-index 
subalgebra $M$ of $L$ that is not self-similar of index $p$.
We divide the proof into two parts according to whether $L$ 
is solvable or unsolvable.

Assume that $L$ is solvable. The following observations are enough 
to cover all the cases (cf. Remark \ref{rclassif}).
If $\mr{dim}[L,L]=1$ then $L$ itself is not self-similar of index $p$
(Proposition \ref{pll1nssp}). If $r\ges 1$ then 
$M = \gen{x_0,px_1,x_2}$ is a subalgebra of 
$L_2(s,r,c)$ and $M\simeq L_{\ref*{C2}}(s,r-1,p^2c)$.
If $r\ges 2$ then $M$ is not self-similar of index $p$ by 
Proposition \ref{pl2nssp}. If $r=1$
one shows that $L_{\ref*{C2}}(s,0,p^2c)\simeq 
L_{\ref*{C7}}(s,1,4^{-1}(p^2c-1))$,
so that $M$ is not self-similar of index $p$ by item (\ref{pl75}) of 
Proposition \ref{pl7nssp}.
Now, let $L=L_{\ref*{C7}}(s,a,c)$ with $c\neq 0$. Observe that $pL$ 
is subalgebra of $L$ and that $pL\simeq L_{\ref*{C7}}(s+1,a,c)$; hence, we can assume 
that $s\ges 1$. Then $M = \gen{x_0,px_1,x_2}$ is a subalgebra of $L$
and $M\simeq L_{\ref*{C7}}(s-1,pa,p^2c)$,
so that $M$ is not self-similar of index $p$ by
item (\ref{pl71}) of Proposition \ref{pl7nssp}.

Now, assume that $L$ is unsolvable. There exists a basis $(x_0,x_1,x_2)$
of $L$ such that $[x_i,x_{i+i}]=a_{i+2}x_{i+2}$, where the index $i$
is interpreted in $\bb{Z}/3\bb{Z}$, and the 
$a_i$'s are non-zero
$p$-adic integers with $v_p(a_0)\les v_p(a_1)\les v_p(a_2)$;
see \cite[Proposition 2.7]{NS2019}.
It is not difficult to see that one can choose $k_0,k_1,k_2\in\bb{N}$
such that, defining $y_i=p^{k_i}x_i$, one has $[y_i,y_{i+i}]=b_{i+2}y_{i+2}$,
where 
the $b_i$'s are non-zero
$p$-adic integers and $v_p(b_0)< v_p(b_1)<v_p(b_2)$.
Hence,
$M = \gen{y_0,y_1,y_2}$ is a subalgebra of $L$ that is not self-similar
of index $p$ by \cite[Theorem 2.32]{NS2019}. 
\end{proof}


\vspace{5mm}

\noindent
\textbf{Proof of Theorem \ref{tshssp}.}
Uniqueness of $s$ is easy to prove (cf. Remark \ref{rlda2}). 
The proof of existence is by induction on $d$.
For $d=2$ the theorem is easily proven, while for $d=3$ it follows
from Proposition \ref{hereddim3v2} and Remark \ref{rlda2}. 
For the induction step,
let $d\ges 4$ and assume that the theorem holds with $d'$ in place of $d$,
where $d'<d$. 
Let $\mathcal{L} = L \otimes_{\bb{Z}_p} \mathbb{Q}_p$. 
Since $\mathcal{L}$ is a solvable Lie algebra over a field of characteristic 0, 
Lie's theorem implies that the $\mathbb{Q}_p$-Lie algebra 
$[\mathcal{L}, \mathcal{L}]$ is nilpotent. 
Hence, the  $\bb{Z}_p$-Lie lattice $[L, L]$ is nilpotent as well. 

We prove that $[L,L]$ is abelian.
Denote temporarily $M=[L,L]$, and 
assume by contradiction that $M$ is not abelian.
Hence, $M$ is a non-abelian
nilpotent Lie lattice. Let $c$ be the nilpotency class of $M$;
then $c\ges 2$. We claim that there exists $x,y\in M$ such that
$[x,y]\neq 0$ and $[x,y]\in Z(M)$ (the center of $M$).
Indeed, $\{0\} \neq \gamma_{c-1}(M)\subseteq Z(M)$.
Hence, there exist $x\in M$ and $y\in\gamma_{c-2}(M)$
such that $[x,y]\neq 0$.
Since $[x,y]\in\gamma_{c-1}(M)$, it follows that $[x,y]\in Z(M)$,
and the claim is proven.
Let $N$ be the subalgebra generated by $x$ and $y$. 
Then $N$ is a nilpotent non-abelian subalgebra
of $L$ with $\mr{dim}[N,N]=1$. The dimension of $N$ is either 2 or 3.
Since no non-abelian Lie lattice of dimension 2 is nilpotent, we have
$\mr{dim}\, N = 3$. Hence, $N$ is not self-similar of index $p$
by Proposition \ref{pll1nssp}, a contradiction.

Let $m=\mr{dim}[L,L]$. Note that $m<d$, since otherwise $L$ would not be solvable.
If $m=0$ ($L$ abelian) then one takes $s=\infty$. Assume $m> 0$ ($L$ not abelian).
Let $J=\mr{iso}_L[L,L]$, which is an isolated abelian ideal of $L$.
Hence, $\mr{dim}\,J= m$,
and there exists a basis $(x_1,...,x_{d-m},y_1,...,y_m)$ of
$L$ such that $(y_1,...,y_m)$ is a basis of $J$.
Let greek indices take values in $\{1,...,d-m\}$,
and latin indices take values in $\{1,...,m\}$.
We have $[y_i,y_j]=0$, and any commutator in $L$
is a linear combination of the $y_i$'s.
Let $M_\alpha = \gen{x_\alpha,y_1,...,y_m}$.
Then $M_\alpha$ is a subalgebra of $L$ of dimension $m+1\ges 2$.
For $z\in J$, $z\neq 0$, define $M'_z$ to be the subalgebra of $L$
generated by $x_1$ and $z$. Observe that $M'_z$ has dimension $n_z\ges 2$.
Moreover, observe that all $M_\alpha$'s and $M'_z$'s are solvable and 
strongly hereditarily self-similar of index $p$. 
We divide the proof in two cases.
\begin{enumerate}
\item Case $m<d-1$. Then $m+1<d$ and $M_\alpha\simeq L^{m+1}(p^{s_\alpha})$ 
for some $s_\alpha\in\bb{N}\cup\{\infty\}$.
Since $\gen{x_\alpha,y_i}$ is a subalgebra of $M_\alpha$,
$[x_\alpha,y_i]=c_{\alpha i}y_i$ for some $c_{\alpha i }\in\bb{Z}_p$. 
By contradiction,
assume $c_{\alpha i}\neq c_{\alpha j}$ for some $i,j$. 
Since $\gen{x_\alpha, y_i+y_j}$ is a subalgebra of $M_\alpha$,
$[x_\alpha,y_i+y_j]=c_{\alpha i}(y_i+y_j)+(c_{\alpha j}-c_{\alpha i})y_j
\in\gen{x_\alpha, y_i+y_j}$, which is a contradiction.
It follows that $[x_\alpha,y_i]=c_\alpha y_i$ for all indices $i$
and some $c_\alpha\in\bb{Z}_p$ with $v_p(c_\alpha)=s_\alpha$.
Observe that $d-m\ges 2$, and that $c_{\alpha_0}\neq 0$ for
some $\alpha_0$.
 \begin{enumerate}
 \item 
 Case $[x_\alpha,x_\beta]=0$ for all $\alpha,\beta$.
 Let $N=\gen{x_{\alpha_0},x_{\alpha_1},y_1}$ with $\alpha_1\neq \alpha_0$.
 Then $N$ is a subalgebra of $L$ of dimension $3$, and $\mr{dim}[N,N]=1$.
 Hence, $N$ is not self-similar of index $p$, a contradiction.
 \item 
 Case $[x_{\beta_0},x_{\beta_1}]\neq 0$ for some $\beta_0,\beta_1$.
 Let $z = [x_{\beta_0},x_{\beta_1}]$, and let $N=\gen{x_{\beta_0},x_{\beta_1},z}$.
 Then $N$ is a subalgebra of $L$ of dimension $3$, and $\mr{dim}[N,N]=1$
 (observe that $[x_{\beta_{j}},z] = c_{\beta_j}z$).
 Hence, $N$ is not self-similar of index $p$, a contradiction.
 \end{enumerate}
 
\item Case $m=d-1$. Recall the notation $n_z = \mr{dim}\,M'_z$.
 \begin{enumerate}
 \item 
 Case $n_z=d$ for some $z$. Let $M=M'_z$ and $J_M=\mr{iso}_M[M,M]$.
 Observe that $\mr{dim}\,J_M = d-1$. Define by recursion $z_1 = z$
 and $z_{i+1}=[x,z_i]$ for $i\ges 1$. One can show that
 $J_M = \gen{z_i:i\ges 1}$. We claim that $\{z_1,...,z_{d-1}\}$
 is a basis of $J_M$. Indeed, denoting by $\bar{w}$ the residue 
 of $w\in J_M$ in $J_M/pJ_M$, we show that 
 $\{\bar{z}_1,...,\bar{z}_{d-1}\}$ is linearly independent over 
 $\bb{F}_p=\bb{Z}_p/p\bb{Z}_p$. If it was not independent,
 some $\bar{z}_{j_0}$ would be a linear combination of 
 $\bar{z}_1,...,\bar{z}_{j_0-1}$, and one could prove
 (from the recursive definition of the $z_i$'s)
 that any $\bar{z}_i$, $i\ges j_0$, 
 would be such a linear combination, so that the dimension
 of $J_M/pJ_M$ over $\bb{F}_p$ would be less than $d-1$, a contradiction.
 The claim that $\{z_1,...,z_{d-1}\}$ is a basis of $J_M$ over $\bb{Z}_p$ follows,
 and from it we get a basis $\{x,z_1,...,z_{d-1}\}$ of $M$
 where $[z_i,z_j]=0$, $[x,z_i] = z_{i+1}$ for $i<d-1$,
 and $[x,z_{d-1}]=\sum_{j=1}^{d-1}a_jz_j$ for some $a_j\in\bb{Z}_p$.
 We claim that $a_1\neq 0$.
 By contradiction, assume $a_1=0$.
 Then $N:=\gen{x,z_2,...,z_{d-1}}$ is a subalgebra of $L$
 of dimension $d-1\ges 3$.
 Moreover, $N$ is solvable 
 and strongly hereditarily self-similar of index $p$.
 Thus, there exists $s\in\bb{N}\cup\{\infty\}$ 
 such that $N\simeq L^{d-1}(p^s)$. Then $\gen{x,z_2}$
 is a subalgebra of $N$, a contradiction (since $d\ges 4$).
 Hence, $a_1\neq 0$. By Corollary \ref{cnsspsub},  
 there exists a non-zero subalgebra of $M$ that is not self-similar of index $p$,
 which gives a contradiction.
 \item 
 Case $n_{z}<d$ for all $z$. Then $M'_{y_i}\simeq L^{n_{y_i}}(p^{s_i})$ for some 
 $s_i\in\bb{N}\cup\{\infty\}$ (for all $i$). Hence, $\gen{x_1,y_i}$
 is a subalgebra of $M'_{y_i}$, and  so $[x_1,y_i]=b_iy_i$
 for some $b_i\in\bb{Z}_p$. Assume by contradiction
 that $b_{j_0}\neq b_{j_1}$ for some $j_0,j_1$. Let $z_0 = y_{j_0}+y_{j_1}$.
 Then $M'_{z_0} \simeq L^{n_{z_0}}(p^t)$. 
 On the other hand, $[x_1,z_0] = b_{j_0}z_0 + (b_{j_1}-b_{j_0})y_{j_1}$ 
 yields that the 2-generated algebra $L^{n_{z_0}}(p^t)$ 
 has dimension greater than 2, 
 which is a contradiction.
 Thus, $[x,y_i]=by_i$ for all indices $i$ and some $b\in\bb{Z}_p$ with
 $b\neq 0$. Hence, $L\simeq L^d(b)\simeq L^d(p^s)$, where $s=v_p(b)$. \ep
 \end{enumerate}
\end{enumerate}

\section{Results on groups}\label{sresgp}

In this section we prove the main theorems of the paper, stated in the introduction. 
Essentially, the proofs follow from the results on Lie lattices of Section 
\ref{seclie}, and from the following slightly generalized version
of \cite[Proposition A]{NS2019}.

\begin{proposition}  \label{Prop-NS2}
Let $G$ be a torsion-free $p$-adic analytic pro-$p$ group.
Assume that any closed subgroup of $G$ is saturable,
and that any 2-generated closed subgroup of $G$ has dimension at most $p$.
Let $L_G$ be the $\bb{Z}_p$-Lie lattice
associated with $G$, and assume that
any 2-generated subalgebra of $L_G$ has dimension
at most $p$. Then, for all $k\in\bb{N}$, the following holds.
\begin{enumerate}
\item $G$ is a self-similar group of index $p^k$
if and only if $L_G$ is a self-similar Lie lattice of index $p^k$.
\item $G$ 
is hereditarily self-similar of index $p^k$
(respectively, strongly hereditarily self-similar of index $p^k$)
if and only if $L_G$ is 
hereditarily self-similar of index $p^k$
(respectively, strongly hereditarily self-similar of index $p^k$).
\end{enumerate}
\end{proposition}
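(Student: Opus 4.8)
The plan is to reduce the whole statement to the saturable Lazard correspondence together with the characterization of self-similarity by simple virtual endomorphisms, following the argument of \cite[Proposition A]{NS2019} but keeping track only of the \emph{local} hypotheses that we have imposed. Recall that a pro-$p$ group $G$ is self-similar of index $p^k$ exactly when it admits a subgroup $H$ of index $p^k$ together with a \emph{simple} virtual endomorphism $\varphi:H\rar G$, i.e. one for which no non-trivial normal subgroup of $G$ contained in $H$ is $\varphi$-invariant (see \cite[Proposition 1.3]{NS2019}); the analogous statement for Lie lattices is the definition recalled in the Introduction. Thus the proposition amounts to establishing an index-preserving dictionary between simple virtual endomorphisms of $G$ and simple virtual endomorphisms of $L_G$.

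First I would set up the correspondence. Since $G$ and all of its closed subgroups are saturable, the saturable Lazard correspondence (Gonz\'alez-S\'anchez; see \cite{NS2019} and the references therein) provides a homeomorphism $\log:G\rar L_G$ under which closed subgroups $H\les G$ correspond bijectively to subalgebras $\log H\subseteq L_G$, with $[G:H]=[L_G:\log H]$ and $\mr{dim}\,H=\mr{dim}(\log H)$, and under which normal subgroups correspond to ideals. This reduces part (1) to the functoriality claim: a group homomorphism $\varphi:H\rar G$ with $H$ saturable corresponds, via $\psi:=\log\circ\varphi\circ\exp$, to a homomorphism of Lie lattices $\psi:\log H\rar L_G$, and conversely every such $\psi$ exponentiates to a group homomorphism. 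Granting this, for a normal subgroup $N$ one has $\psi(\log N\cap \log H)=\log\big(\varphi(N\cap H)\big)$, so the ideal $\log N$ is $\psi$-invariant if and only if $N$ is $\varphi$-invariant; since $N\neq\{1\}$ if and only if $\log N\neq\{0\}$, the endomorphism $\varphi$ is simple if and only if $\psi$ is, and index is preserved. This yields the equivalence of self-similarity of index $p^k$ for $G$ and $L_G$.

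The main obstacle is precisely the functoriality step, and this is where the hypothesis on $2$-generated subgroups enters. To check that $\psi$ preserves brackets one must compare $\psi([u,v])$ with $[\psi(u),\psi(v)]$, and the bracket $[u,v]$ in $L_G$ is recovered from the group structure of the closed subgroup $\gen{\exp u,\exp v}$ through the Hausdorff series. By hypothesis this $2$-generated subgroup, and likewise its image under $\varphi$, has dimension at most $p$, which keeps it inside the regime where $\log$ restricts to a genuine Lie-algebra isomorphism and where the relevant Hausdorff computations are $p$-integral and unobstructed. Hence bracket-preservation can be verified two elements at a time, exactly as in \cite[Proposition A]{NS2019}, the only change being that the global bound $\mr{dim}\,G<p$ used there is replaced by the present bound on $2$-generated subgroups; the parallel bound on $2$-generated subalgebras of $L_G$ powers the converse direction, guaranteeing that a Lie homomorphism exponentiates to a group homomorphism.

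Finally, for part (2) I would exploit that the correspondence is hereditary. Every open subgroup $U\les G$, and every non-trivial closed subgroup, is again torsion-free $p$-adic analytic and saturable, and any $2$-generated closed subgroup of $U$ is a $2$-generated closed subgroup of $G$, so $U$ inherits all the hypotheses; dually, $\log U$ is a finite-index (respectively non-zero) subalgebra of $L_G$ inheriting the corresponding hypotheses. Applying part (1) to each such $U$ together with its image $\log U$, and using that $\log$ matches open subgroups with finite-index subalgebras and non-trivial closed subgroups with non-zero subalgebras, the hereditary and strongly hereditary equivalences follow by quantifying the statement of part (1) over the appropriate family of subgroups.
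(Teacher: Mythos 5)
Your proposal is correct and follows essentially the same route as the paper, which simply assembles Lazard's correspondence, the functoriality result for saturable groups whose $2$-generated closed subgroups (and $2$-generated subalgebras) have dimension at most $p$ (Theorem E of Gonz\'alez-S\'anchez--Klopsch), the index-preservation argument from \cite[Theorem 3.1]{NS2019}, and the simple-virtual-endomorphism characterization of self-similarity from \cite[Proposition 1.3]{NS2019}. Your write-up is just a more explicit unpacking of those same ingredients, correctly locating where the $2$-generated hypotheses are used and how the hereditary statements follow by quantifying part (1) over subgroups/subalgebras.
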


\begin{proof}
The proposition follows from 
Lazard's correspondence \cite{Laz65},
Theorem E of \cite{GSKpsdimJGT},
the argument proving $[G:D]=[L_G:L_D]$ in the proof of \cite[Theorem 3.1]{NS2019},
and \cite[Proposition 1.3]{NS2019}.
\end{proof}

\begin{remark}\label{rgsk2}
Let $G$ be a torsion-free $p$-adic analytic pro-$p$ group.
If $G$ is saturable and $\mr{dim}(G)\les p$ then
the hypotheses of Proposition \ref{Prop-NS2} are satisfied;
if $\mr{dim}(G)<p$ then the same conclusion holds without assuming 
a priori that $G$ is saturable \cite[Theorem A]{GSKpsdimJGT}. 
We will also use the fact that if $G$ is saturable and $L_G$
is the associated $\bb{Z}_p$-Lie lattice then $G$ is solvable
if and only if $L_G$ is solvable \cite[Theorem B]{GSpsat}.
\end{remark}

\begin{remark}\label{rghered}
This remark is the analogue of Remark \ref{rhered}
in the context of groups.
Let $G$ be a finitely generated pro-$p$ group.
For $k\in\bb{N}$, if $G$ is strongly 
hereditarily self-similar of index $p^k$ then $G$ is
hereditarily self-similar of index $p^k$.
Assume, moreover, that $G$ is torsion-free and $p$-adic analytic.
From \cite[Proposition 1.5]{NS2019} it follows that if $\mr{dim}(G)=1,2$
then $G$ is strongly hereditarily self-similar of index $p^k$
for all $k\ges 1$.
Consequently, if $G$ has dimension 3 and
$G$ is hereditarily self-similar of index $p^k$ then
$G$ is strongly hereditarily self-similar of index $p^k$.
\end{remark}

\begin{proposition}\label{pgstrong2m}
Let $m\ges 1$, and let $G$
be a 3-dimensional solvable torsion-free $p$-adic analytic pro-$p$ group.
Assume either `$\,p\ges 5$' or `$\,p=3$ and $G$ is saturable'.
Then $G$ is strongly hereditarily self-similar of index $p^{2m}$.
\end{proposition}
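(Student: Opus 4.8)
The plan is to transfer the statement to the associated $\bb{Z}_p$-Lie lattice via Proposition \ref{Prop-NS2} and then invoke the already-established Lie-lattice result Proposition \ref{heranyindex}. The first step is to verify that the hypotheses of Proposition \ref{Prop-NS2} are met in both regimes allowed by the statement. When $p\ges 5$ one has $\mr{dim}(G)=3<p$, so by Remark \ref{rgsk2} the group $G$ is automatically saturable and the hypotheses of Proposition \ref{Prop-NS2} hold without any extra assumption. When $p=3$ one has $\mr{dim}(G)=3=p$, and since $G$ is assumed to be saturable, Remark \ref{rgsk2} again guarantees that the hypotheses are satisfied. This case-split on $p$ is the only bookkeeping the proof really requires.

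With the hypotheses in place, Proposition \ref{Prop-NS2}(2) gives that $G$ is strongly hereditarily self-similar of index $p^{2m}$ if and only if the associated $\bb{Z}_p$-Lie lattice $L_G$ has the same property. Moreover, since $G$ is saturable and solvable, Remark \ref{rgsk2} tells us that $L_G$ is solvable, and evidently $\mr{dim}(L_G)=\mr{dim}(G)=3$. Thus the problem is reduced to a statement purely about $L_G$.

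It then remains to observe that $L_G$ is a $3$-dimensional solvable $\bb{Z}_p$-Lie lattice, so Proposition \ref{heranyindex} applies directly and yields that $L_G$ is strongly hereditarily self-similar of index $p^{2m}$. Combining this with the equivalence from Proposition \ref{Prop-NS2}(2) transfers the property back to $G$ and completes the argument.

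I do not anticipate any serious obstacle: the entire content is already packaged in the Lie-lattice results of Section \ref{seclie} (ultimately Corollary \ref{csigmap2} and Remark \ref{rhered}, through Proposition \ref{heranyindex}) together with the group/lattice dictionary of Proposition \ref{Prop-NS2}. The only point demanding a little care is confirming the saturability and dimension conditions needed to invoke Proposition \ref{Prop-NS2} via Remark \ref{rgsk2}, which is precisely where the hypothesis `$p\ges 5$ or ($p=3$ and $G$ saturable)' enters.
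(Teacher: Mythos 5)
Your argument is correct and follows exactly the paper's route: verify the hypotheses of Proposition \ref{Prop-NS2} via Remark \ref{rgsk2} (distinguishing $p\ges 5$ from $p=3$ with saturability), transfer solvability to $L_G$, and conclude from Proposition \ref{heranyindex}. The paper's own proof is the same one-line combination of Propositions \ref{Prop-NS2} and \ref{heranyindex}; you have merely spelled out the bookkeeping.
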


\begin{proof}
The proposition follows from Propositions \ref{Prop-NS2} and \ref{heranyindex}.
\end{proof}

\vspace{3mm}

\noindent
\textbf{Proof of Theorem \ref{tmaingroups}.} 
Let $L$ be the $\bb{Z}_p$-Lie lattice associated with $G$.
Then $L$ is a residually nilpotent 3-dimensional solvable Lie lattice
\cite[Theorem B]{GSKpsdimJGT}.
From Corollary \ref{csigmap2}, $L$ is self-similar of index $p^2$.
Hence, by Proposition \ref{Prop-NS2}, $G$ is self-similar of index $p^2$.
The statement on self-similarity of index $p$ follows from
Proposition \ref{Prop-NS2}, Theorem \ref{talgpodd} and Remark \ref{rclassif}.
\ep

\vspace{5mm}

\noindent
\textbf{Proof of Theorem \ref{tmainD}.}
The theorem follows from Remark \ref{rghered}, 
Proposition \ref{Prop-NS2}, and Proposition \ref{hereddim3v2}.
\ep

\begin{remark}\label{rsatp3}
A similar result to Theorem \ref{tmaingroups} holds for $p=3$.
Let $G$ be a 3-dimensional solvable \textit{saturable} $3$-adic analytic pro-$3$ group.
Then $G$ is self-similar of index $9$. Let $L$ be the $\bb{Z}_3$-Lie lattice
associated with $G$.
Then $G$ is self similar of index $3$ if and only if $L$ is isomorphic to a Lie lattice
appearing in the list of Theorem \ref{talgpodd}.
\end{remark}

\begin{remark}  \label{rhere}
Let $G$ be one of the groups in the 
list below, where $d$ is an integer. Observe that
this list extends the one appearing in the statement
of Theorem \ref{tmainresult} (here there 
is no assumption $p>d$).
  \begin{enumerate}
  \item For $d\ges 1$, the abelian pro-$p$ group $\mathbb{Z}_p^d$;
  \item 
  For $d\ges 2$, the metabelian pro-$p$ group
  $G^d(s) = \bb{Z}_p\ltimes \bb{Z}_p^{d-1}$, 
  where
  the canonical generator of 
  $\bb{Z}_p$ acts on $\bb{Z}_p^{d-1}$
  by  multiplication by the scalar $1+p^s$ for some integer $s$ such that 
  $s\ges 1$ if $p\ges 3$, and $s\ges 2$ if $p=2$.
  \end{enumerate}
Then $G$ is a uniformly powerful $p$-adic analytic pro-$p$
group of dimension $d$. 
Let $L_G$ be the $\bb{Z}_p$-Lie lattice associated with $G$.
Observe that if $G$ is abelian then $L_G\simeq L^d(0)$,
while if $G=G^d(s)$ then $L_G\simeq L^d(p^s)$.
One can show that any subgroup of $G$
generated by  two elements is powerful.
It follows that any closed subgroup of $G$
is uniformly powerful, hence, saturable.
Clearly, any 2-generated closed subgroup 
of $G$ has dimension at most 2.
\end{remark}

\begin{proposition}\label{here2}
Let $k\ges 1$ be an integer, and let $G$ be a group isomorphic to 
one of the groups in the list of Remark \ref{rhere}. Then
$G$ is strongly hereditarily self-similar of index $p^k$.
\end{proposition}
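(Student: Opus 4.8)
The plan is to transfer the statement from the group $G$ to its associated $\bb{Z}_p$-Lie lattice $L_G$ by means of Proposition \ref{Prop-NS2}, and then to quote the lattice-level result already proved in Proposition \ref{pl1dssp}. First I would verify that $G$ meets the hypotheses of Proposition \ref{Prop-NS2}. By Remark \ref{rhere}, $G$ is a torsion-free (indeed uniformly powerful) $p$-adic analytic pro-$p$ group of dimension $d$, every closed subgroup of $G$ is saturable, and every $2$-generated closed subgroup of $G$ has dimension at most $2$. Since $p\ges 2$, we have $2\les p$, so the required bound on $2$-generated closed subgroups holds. On the Lie side, Remark \ref{rhere} identifies $L_G\simeq L^d(0)$ when $G$ is abelian and $L_G\simeq L^d(p^s)$ when $G=G^d(s)$; in either case $L_G\simeq L^d(a)$ for some $a\in\bb{Z}_p$. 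By Remark \ref{rlda2}, every $2$-generated subalgebra of $L^d(a)$ has dimension at most $2\les p$, so the last hypothesis of Proposition \ref{Prop-NS2} is satisfied as well.

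Having set this up, I would apply Proposition \ref{pl1dssp}(\ref{pl1dssp2}), which asserts that $L^d(a)$ is strongly hereditarily self-similar of index $p^k$ for every $k\ges 1$. Through the identification $L_G\simeq L^d(a)$ this shows that $L_G$ is strongly hereditarily self-similar of index $p^k$. Finally, the second part of Proposition \ref{Prop-NS2}, applied in the direction ``$L_G$ strongly hereditarily self-similar $\Rightarrow G$ strongly hereditarily self-similar of the same index,'' yields the desired conclusion for $G$.

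The argument is essentially bookkeeping: it consists of checking that the standing hypotheses of the Lazard-type correspondence of Proposition \ref{Prop-NS2} are in force and then invoking the lattice result, the genuine work having already been done in the proof of Proposition \ref{pl1dssp}. I do not expect a real obstacle here. The only point deserving minor care is the uniform treatment of $p=2$: one must use the fact, recorded in Remark \ref{rhere} (with the constraint $s\ges 2$ when $p=2$), that all the groups in question are uniformly powerful and hence saturable for every prime, so that Proposition \ref{Prop-NS2} applies without having to exclude $p=2$.
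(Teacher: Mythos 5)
Your proposal is correct and follows essentially the same route as the paper: verify the hypotheses of Proposition \ref{Prop-NS2} via Remarks \ref{rhere} and \ref{rlda2}, then invoke Proposition \ref{pl1dssp}(\ref{pl1dssp2}) and transfer back to $G$. The only cosmetic point is that $L^d(a)$ and Proposition \ref{pl1dssp} are stated for $d\ges 2$, so the case $d=1$ (where $G\simeq\bb{Z}_p$ and the claim is immediate) should be dispatched separately, as the paper does.
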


\begin{proof}
If $d:=\mr{dim}(G)=1$ then
$G\simeq \bb{Z}_p$ and the result is clear.
Assume $d\ges 2$. 
The result follows from 
Remark \ref{rhere}, Remark \ref{rlda2}, 
Proposition \ref{Prop-NS2} and Proposition \ref{pl1dssp}. 
\end{proof}

\vspace{5mm}

Under the assumption that $p>\mr{dim}(G)$ we can prove the converse of 
Proposition \ref{here2}, which is the main result of the paper.

\vspace{3mm}

\noindent
\textbf{Proof of Theorem \ref{tmainresult}.} 
The `if' part follows from Proposition \ref{here2}.
For the `only if' part, if $d=1$ then $G\simeq \bb{Z}_p$.
Assume $d\ges 2$. Observe that in this case $p\ges 3$.
By Remark \ref{rgsk2} we can apply Proposition \ref{Prop-NS2}.
Let $L_G$ be the $\bb{Z}_p$-Lie lattice associated with $G$, which is residually nilpotent.
From Theorem \ref{tshssp}, $L_G\simeq L^d(p^s)$ for some $s\in\bb{N}\cup\{\infty\}$,
while from residual nilpotency we deduce that $s\ges 1$.
Now, the theorem follows from Remark \ref{rhere}.
\ep

\vspace{5mm}

Assume that $p$ is odd, and let $K$ be a field
that contains a primitive $p$-th root of unity
(necessarily, $K$ has characteristic different from $p$).
In \cite{Ware92}, Roger Ware proved
that if $G_K(p)$ is finitely generated and it  does not contain a non-abelian free pro-$p$ subgroup, then  $G_K(p)$  is either a free abelian pro-$p$ group of finite rank, or it is isomorphic to $G^d(s)$ for some  
integers $d \ges 2$ and $s \ges 1$.  
In particular, the same conclusion holds if $G_K(p)$ is solvable or $p$-adic analytic.
Indeed, Ware proved this result under the additional assumption that $K$ contains a primitive $p^2$-th root of unity, and conjectured that the result should be true without this assumption. The conjecture was proved by Quadrelli in \cite{Quad14}. 
As a direct consequence of Proposition \ref{here2} and the result of Ware, we have the following.

\begin{proposition}  \label{maximal}
Assume $p\ges 3$, and let $K$ be a field that 
contains a primitive $p$-th root of unity.   Suppose that $G_K(p)$ is a non-trivial finitely generated pro-$p$ group that does not contain a non-abelian free pro-$p$ subgroup.
Then $G_K(p)$ is strongly hereditarily self-similar of index $p$.
\end{proposition}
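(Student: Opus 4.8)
The plan is to derive the statement as a direct corollary of the structural classification of these Galois groups due to Ware and Quadrelli, combined with the self-similarity result already established in Proposition \ref{here2}. Essentially no new computation is needed; the content lies entirely in matching the output of the classification theorem with the hypotheses of Proposition \ref{here2}.

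First I would invoke the theorem of Ware \cite{Ware92}, in the strengthened form provided by Quadrelli \cite{Quad14} (which removes the assumption that $K$ contain a primitive $p^2$-th root of unity). Since $G_K(p)$ is non-trivial, finitely generated, and contains no non-abelian free pro-$p$ subgroup, and since $K$ contains a primitive $p$-th root of unity with $p\ges 3$, this result yields that $G_K(p)$ is isomorphic either to a free abelian pro-$p$ group $\bb{Z}_p^d$ of finite rank $d\ges 1$, or to $G^d(s)$ for some integers $d\ges 2$ and $s\ges 1$.

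Next I would observe that, because we are in the case $p\ges 3$, each group produced by the classification already appears in the list of Remark \ref{rhere}: the groups $\bb{Z}_p^d$ are precisely the abelian entries, and the groups $G^d(s)$ with $s\ges 1$ match the metabelian entries under exactly the constraint imposed there for odd $p$. Hence $G_K(p)$ is isomorphic to one of the groups in that list, and applying Proposition \ref{here2} with $k=1$ gives that $G_K(p)$ is strongly hereditarily self-similar of index $p$, as desired.

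Since the difficult work has been carried out elsewhere, I do not expect a genuine obstacle here. The one point that deserves explicit verification is that the parameter range $s\ges 1$ coming out of Ware's theorem is compatible with the range allowed in Remark \ref{rhere}; as both require $s\ges 1$ when $p\ges 3$, there is no gap, and the argument closes cleanly.
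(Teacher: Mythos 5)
Your proposal is correct and follows exactly the paper's own route: the paper derives Proposition \ref{maximal} as a direct consequence of the Ware--Quadrelli classification of such Galois groups (into $\bb{Z}_p^d$ and $G^d(s)$ with $s\ges 1$) together with Proposition \ref{here2}. The one detail you flag for verification, compatibility of the parameter range $s\ges 1$ with Remark \ref{rhere} for $p\ges 3$, is indeed the only matching point, and it holds.
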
 

Conversely, assuming $p$-odd, it is shown in \cite{Ware92} that
any group in the list of Remark \ref{rhere}
is isomorphic to $G_K(p)$ for some field $K$
that contains a primitive $p$-th root of unity. We recall the construction
of $K$ for the non-abelian groups $G^d(s)$, in which case
$d\ges 2$ and $s \ges 1$. 
Let $r$ be a prime with 
$r \equiv_p 1$, 
and let $F = \mathbb{F}_r(\omega_s)$, where $ \mathbb{F}_r$ 
is a finite field with $r$ elements and $\omega_s$ is a primitive $p^s$-th root of unity. Then one may take $K = F((x_1))  \cdots  ((x_{d-1}))$, the field of iterated formal Laurent series. 

\vspace{5mm}

\noindent
\textbf{Proof of Theorem \ref{tmainB}.} 
For $p>2$ the result follows from Theorem \ref{tmainresult} and the above discussion. When $p=2$, we observe that $G_{\mathbb{F}_q}(2) \simeq \bb{Z}_2$ for any finite field $\mathbb{F}_q$ with $q$ elements; this  follows from the well-known fact that the absolute Galois group of  $\mathbb{F}_q$ is isomorphic to 
$ \widehat{\bb{Z}} = \prod_{r}{\bb{Z}_r}$, where the product ranges over
all primes $r$.

\vspace{5mm}

As mentioned in the Introduction, 
during the last decade the groups listed in Theorem \ref{tmainresult}
have been object of study. We recall the related results,
and we complement them with the results of this paper.
A pro-$p$ group $G$ is said to have a constant generating number on open subgroups if $d(H) = d(G)$ for all open subgroups $H$ of $G$,
where $d(G)$ is the minimum number of elements of
a topological generating set for $G$. 
Pro-$p$ groups with constant generating number on open subgroups were classified by Klopsch and Snopche in \cite{KSjalg11}.  On the other hand, a Bloch-Kato pro-$p$ group is a pro-$p$ group $G$ with the property that the $\mathbb{F}_p$-cohomology ring of every closed subgroup of $G$ is quadratic. 
In \cite{Quad14}, Quadrelli described explicitly all finitely generated Block-Kato pro-$p$ groups that do not contain a free non-abelian pro-$p$ group. Finally,  a pro-$p$ group $G$ is said to be hereditarily uniform if every open subgroup of $G$ is uniform. Hereditarily uniform pro-$p$ groups were classified by Klopsch and Snopche in \cite{KSquart14}.  The results of Klopsch, Snopche and Quadrelli (\cite[Corollary 2.4]{KSjalg11}, \cite[Corollary 1.13]{KSquart14} and \cite[Theorem B]{Quad14}) together with Theorem \ref{tmainB} yield the following.

\begin{theorem}\label{thBext}
Let $G$ be a non-trivial solvable torsion-free $p$-adic analytic pro-$p$ group, and suppose that $p > \textrm{dim}(G)$. Then the following are equivalent. 
  \begin{enumerate}
  \item $G$ is strongly hereditarily self-similar of index $p$.
  \item $G$ is isomorphic to the maximal pro-$p$ Galois group of some field that contains a primitive $p$-th root of unity.
   \item  $G$ has constant generating number on open subgroups.
    \item $G$ is a Bloch-Kato pro-$p$ group.
     \item  $G$ is a hereditarily uniform pro-$p$ group.
    \end{enumerate} 
\end{theorem}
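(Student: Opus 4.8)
The plan is to treat the five conditions as five characterizations of a single explicit class of groups, and to show that each condition picks out exactly this class. Write $(\ast)$ for the property that $G$ is isomorphic to $\bb{Z}_p^d$ for some $d\ges 1$, or to $G^d(s)$ for some $d\ges 2$ and some integer $s\ges 1$. First I would dispose of the base case: since $G$ is non-trivial, if $\mr{dim}(G)=1$ then $G\simeq\bb{Z}_p$ and all five statements hold, so from now on I may assume $\mr{dim}(G)\ges 2$ and hence $p\ges 3$.

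Next I would record the two equivalences already available in the paper. The equivalence of $(1)$ with $(\ast)$ is exactly Theorem \ref{tmainresult}, and the equivalence of $(1)$ with $(2)$ is Theorem \ref{tmainB}; together these give $(1)\Leftrightarrow(2)\Leftrightarrow(\ast)$, using Ware's theorem and the explicit field constructions recalled around Proposition \ref{maximal}.

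The remaining work is to pin conditions $(3)$, $(4)$ and $(5)$ to $(\ast)$, and here the plan is simply to quote the relevant published classifications and specialize them to our standing hypotheses. I would use \cite[Corollary 2.4]{KSjalg11} for $(3)\Leftrightarrow(\ast)$, \cite[Theorem B]{Quad14} for $(4)\Leftrightarrow(\ast)$, and \cite[Corollary 1.13]{KSquart14} for $(5)\Leftrightarrow(\ast)$. In each case I would check that, once solvability, torsion-freeness, $p$-adic analyticity and $p>\mr{dim}(G)$ are imposed, the groups produced by the cited theorem are exactly the two families in $(\ast)$; for instance, solvability makes the ``no non-abelian free pro-$p$ subgroup'' hypothesis of \cite{Quad14} automatic. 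With $(\ast)$ serving as the common pivot, the five equivalences assemble into the statement.

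I expect the only genuine difficulty to be this last bookkeeping step: each external source states its classification under a slightly different set of standing assumptions (different constraints on $p$, on finite generation, or on analyticity), so I would have to verify carefully that restricting every one of these lists to the present setting yields precisely $\bb{Z}_p^d$ and $G^d(s)$ with $s\ges 1$, with no extra groups slipping in and none dropping out. Since Theorem \ref{tmainresult} and all three cited classifications independently single out these same two families, I expect them to match on the nose, but confirming the agreement of hypotheses is where the care is needed.
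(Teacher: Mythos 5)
Your proposal matches the paper's argument exactly: the paper derives this theorem by combining Theorem \ref{tmainB} (equivalently, Theorem \ref{tmainresult} plus Ware's result) with the cited classifications \cite[Corollary 2.4]{KSjalg11}, \cite[Theorem B]{Quad14} and \cite[Corollary 1.13]{KSquart14}, using the explicit list of groups as the common pivot, just as you describe. The hypothesis bookkeeping you flag is indeed the only content beyond citation, and the paper leaves it implicit.
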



\section{Open problems}\label{sopen}

This paper deals with as-yet-unexplored directions about self-similar groups, 
so there are many interesting open problems that one may consider. The following two questions are quite natural.

\vspace{5mm}

\noindent \textbf{Question 1.} Classify the strongly hereditarily self-similar pro-$p$ groups of index $p$.

\vspace{5mm}

\noindent \textbf{Question 2.} Classify the  hereditarily self-similar pro-$p$ groups of index $p$.

\vspace{5mm}

All the examples of strongly hereditarily self-similar pro-$p$ groups of index $p$ that we know are $p$-adic analytic.

\vspace{5mm}

\noindent \textbf{Question 3.} Is there a finitely generated strongly hereditarily self-similar pro-$p$ group of index $p$ which is not $p$-adic analytic?

\vspace{5mm}

Let $K$ be a $p$-adic number field, that is, 
a finite extension of $\mathbb{Q}_p.$ It is well known (see \cite[Theorem 7.5.11]{NSW2008}) that if $K$ does not contain a primitive $p$-th root of unity then $G_K(p)$ is a free pro-$p$ group of finite rank. On the other hand, if $K$  contains a primitive $p$-th root of unity then $G_K(p)$ is a Demushkin group, that is, a 
Poincar\'e duality pro-$p$ group of dimension 2. 
Pro-$p$ completions of surface groups are also Demushkin groups. Inspired from Proposition \ref{maximal}, we raise the following questions.

\vspace{5mm}

\noindent \textbf{Question 4.} Does a free pro-$p$ group of finite rank admit a faithful self-similar action on a $p$-ary tree?

\vspace{5mm}

\noindent

\noindent \textbf{Question 5.} Does a Demushkin pro-$p$ group admit a faithful self-similar action on a $p$-ary tree?

\vspace{5mm}

\noindent

Note that an affirmative answer to Question 4 would imply that a free pro-$p$ group of finite rank is strongly hereditarily self-similar of index $p$. On the other hand, since  every open subgroup of a Demushkin group is also Demushkin,  an affirmative answer to Question 5 would imply that Demushkin groups are hereditarily self-similar of index $p$.  Moreover, since every infinite index subgroup of a Demushkin group is free pro-$p$, an affirmative answer to  both questions would imply that Demushkin groups are strongly hereditarily self-similar of index $p$. Note that if $G$ is a Demushkin group with $d(G) = 2$, then it is a torsion-free $p$-adic analytic pro-$p$ group of dimension 2, and therefore it is strongly hereditarily self-similar of index $p$. Thus Question 5 is open only for Demushkin groups $G$ with $d(G) > 2$.

\bibliographystyle{amsalpha}


\begin{footnotesize}

\providecommand{\bysame}{\leavevmode\hbox to3em{\hrulefill}\thinspace}
\providecommand{\MR}{\relax\ifhmode\unskip\space\fi MR }
\providecommand{\MRhref}[2]{%
  \href{http://www.ams.org/mathscinet-getitem?mr=#1}{#2}
}
\providecommand{\href}[2]{#2}

\end{footnotesize}

\end{document}